\documentclass[11pt,reqno]{amsart}

\usepackage[utf8]{inputenc}
\usepackage[margin=1.25in]{geometry}
\parindent=.25in
\usepackage{hyperref}
\usepackage{appendix}
\usepackage{amsfonts}
\usepackage{amsthm}
\usepackage{amssymb}
\usepackage{stmaryrd} 
\usepackage{amsmath}
\usepackage{amsthm}
\usepackage[dvipsnames]{xcolor}
\usepackage{mathrsfs}
\usepackage{lipsum} 

\theoremstyle{plain}
\newtheorem{theorem}{Theorem}[section]

\newtheorem{lemma}[theorem]{Lemma}
\newtheorem{Proposition}[theorem]{Proposition}

\newtheorem{Definition}[theorem]{Definition}

\theoremstyle{remark}
\newtheorem{example}[theorem]{Example}

\usepackage{biblatex} 
\addbibresource{sample.bib} 

\numberwithin{equation}{section}
\title[Spectral radius concentration for random matrices]{Spectral radius concentration for inhomogeneous random matrices with independent entries}

\author{Yi HAN}
\address{Department of Mathematics, Massachusetts Institute of Technology, Cambridge MA
}
\email{hanyi16@mit.edu}

\begin{document}

\begin{abstract}

Let $A$ be a square random matrix of size $n$, with mean zero, independent but not identically distributed entries, with variance profile $S$. When entries are i.i.d. with unit variance, the spectral radius of $n^{-1/2}A$ converges to $1$ whereas the operator norm converges to 2. Motivated by recent interest in inhomogeneous random matrices, in particular non-Hermitian random band matrices, we formulate general upper bounds for $\rho(A)$, the spectral radius of $A$, in terms of the variance $S$. We prove (1) after suitable normalization $\rho(A)$ is bounded by $1+\epsilon$ up to the optimal sparsity $\sigma_*\gg (\log n)^{-1/2}$ where $\sigma_*$ is the largest standard deviation of an individual entry; (2) a small deviation inequality for $\rho(A)$ capturing fluctuation beyond the optimal scale $\sigma_*^{-1}$; (3) a large deviation inequality for $\rho(A)$ with Gaussian entries and doubly stochastic variance; and (4) boundedness of $\rho(A)$ in certain heavy-tailed regimes with only $2+\epsilon$ finite moments and inhomogeneous variance profile $S$. The proof relies heavily on the trace moment method.

\end{abstract}

\maketitle

\section{Introduction}

Let $A=(a_{ij})$ be an $n$ by $n$ random matrix with independent, mean zero entries $a_{ij}$. Let $S:=(\mathbb{E}[|a_{ij}|^2])_{i,j}$ denote the variance profile of $A$. The aim of this paper is to obtain high probability upper bounds for $\rho(A)$, the spectral radius of $A$, for a wide class of variance profiles $S$ up to high precisions that capture both sparsity of the matrix $A$ and fluctuations at the spectral edge at the almost optimal scale.

The spectral radius $\rho(A)$ for a random matrix has important practical implications. Consider a simplified model of a system of linear ODEs driven by a random matrix $X$
\begin{equation}\label{solutionut}
\frac{d}{dt}u_t=-gu_t+Xu_t,\end{equation}
 where $g$ is a coupling constant, the spectral radius $\rho(X)$ controls the long time behavior of its solutions with large real part. It has since played an important part in analyzing the interplay of ecological stability and complexity, originating from the seminal paper of May \cite{may1972will}, see also \cite{allesina2015stability}. More recent applications concern neural networks \cite{sompolinsky1988chaos}, \cite{grela2017drives}, \cite{hennequin2014optimal}, \cite{rajan2006eigenvalue}, \cite{muir2015eigenspectrum}, \cite{gudowska2020synaptic} having a structured inhomogeneous variance profile $S$, which could be very sparse with large bulks of zero entries. In addition to the almost sure limit of $\rho(X)$, deriving finer asymptotics of $\rho(X)$ around its deterministic limit for large $n$ can possibly allow us to compute the solution to $u_t$ \eqref{solutionut} at much larger time scales at the critical coupling $g=\sqrt{\rho(X)}$, see \cite{chalker1998eigenvector}, \cite{erdos2018power}, \cite{erdHos2023randomly} for computing asymptotics of ODE systems at the critical coupling. Note however that in \cite{chalker1998eigenvector}, \cite{erdos2018power}, \cite{erdHos2023randomly} the variance profile $S$ is uniformly bounded from below or uniformly primitive in the terminology of \cite{erdos2018power}, yet these non-degeneracy assumptions on $S$ are not assumed throughout in this paper.

The homogeneous case, i.e. when $S=\frac{1}{n}\mathbf{1}\mathbf{1}^t$ where $\mathbf{1}$ is the all-ones vector of dimension $n$, has been much studied in the literature. The convergence of $\rho(A)$ to the deterministic limit 1 has been obtained under a fourth moment condition in \cite{Bai1986LimitingBO}, see also \cite{Geman1986THESR} where the same result was proven under stronger moment assumptions. When $A$ has Gaussian entries, the precise distribution of $\rho(A)$, namely:
\begin{equation}\label{fluctuationedge}
\rho(A)=1+\sqrt{\frac{\gamma_n}{4n}}+\frac{\mathcal{G}_n}{\sqrt{4n\gamma_n}},\quad \gamma_n:=\log n-2\log\log n-\log 2\pi,
\end{equation} where $\mathcal{G}_n$ converges to a Gumbel variable, 
has been uncovered in \cite{MR1986426}, \cite{MR3211006} via the exact integrable structure of the Gaussian matrix. Recently \cite{cipolloni2023universality} showed that the asymptotic expansion \eqref{fluctuationedge} is universal fora large class of random matrices with i.i.d. distributions.

Sparse i.i.d. matrices also received significant recent attention. Take for example the adjacency matrix of Erdös-Renyi digraph where we set $a_{ij}\sim \operatorname{Ber}(\frac{p_n}{n})$ independently for each $i,j$. Despite the sparsity when $p_n\ll n$, the variance profile is still very homogeneous, and high trace method has been applied in \cite{benaych2020spectral} for $p_n\gg\log n$ and the method of characteristic functions has been applied in \cite{coste2023sparse} to determine the spectral outliers even for constant $p_n\geq 1$. For $p_n\geq n^\epsilon$, finer edge statistics have been obtained in \cite{he2023edge}.

In this work we aim to get a unified understanding for $\rho(A)$ without any non-degeneracy or flatness condition on the variance $S$.
This is highly relevant for the theory of inhomogeneous structured random matrices (see \cite{van2017structured}, \cite{adamczak2024norms}) as previous works mostly consider only the largest singular value. In addition to the neural network models discussed before where sparsity and inhomogeneity are key model features, another guiding example is non-Hermitian random band matrices with large chunks of zero entries. Consider for example the block band matrix
\begin{equation}\label{blockcanonicalform}
A=\begin{pmatrix} D_1&U_2&&&T_m\\T_1&D_2&U_3&&\\&T_2&D_3&\ddots&\\&&\ddots&\ddots&U_m \\ U_1&&&T_{m-1}&D_m \end{pmatrix}
\end{equation}
where each $D_i,T_i,U_i$ are some $b_n\times b_n$ matrix with i.i.d. entries. Another important model is the periodic band matrix (with bandwidth $d_n$) where we set $a_{ij}=0$ for any $\frac{d_n-1}{2}<|i-j|<n-\frac{d_n-1}{2}$, and $a_{ij}$ are i.i.d. copies of $\frac{1}{\sqrt{d_n}}\xi$ if $|i-j|\leq \frac{d_n-1}{2}$ or $|i-j|\geq n-\frac{d_n-1}{2}$.  
Hermitian band matrix models have received very high attention in recent years, see the survey \cite{bourgade2018random} and references therein. The non-Hermitian case is less studied, see \cite{MR3857860}, \cite{han2024outliers}, \cite{latala2018dimension}, \cite{han2024circular} for some recent progress.

What would be a correct bound for $\rho(A)$? The closely related problem of bounding $\|A\|$ has been much studied. By \cite{bandeira2016sharp}, Theorem 3.1, in the case $a_{ij}\sim N(0,b_{ij}^2)$ the real Gaussian distribution with mean zero and variance $b_{ij}^2$, we have for any $\epsilon\in(0,\frac{1}{2})$,
\begin{equation}\label{whatistracywidom}
\mathbb{E}\|A\|\leq (1+\epsilon)\left(\max_i\sqrt{\sum_j b_{ij}^2}+\max_j\sqrt{\sum_i b_{ij}^2}\right)+\frac{5(1+\epsilon)}{\sqrt{\log(1+\epsilon)}}\max_{ij}|b_{ij}|\sqrt{\log n}.
\end{equation}
Recently \cite{brailovskaya2024extremal} improved this bound to capture fluctuations at the Tracy-Widom scale. Despite the obvious bound $\rho(A)\leq \|A\|$, the bound \eqref{whatistracywidom} does not effectively capture the bound on $\rho(A)$: assume $S$ is doubly stochastic, then there is a multiplicative constant $2(1+\epsilon)$ in the bound \eqref{whatistracywidom}, but at least for the homogeneous case we have $\rho(A)\leq 1+\epsilon$ by \cite{Bai1986LimitingBO},\cite{Geman1986THESR}. This discrepancy is due to the fact that $A$ is far from a normal matrix, so that controlling $\rho(A)$ is a fundamentally non-Hermitian problem. Despite this discrepancy, we can still learn some lessons from the Hermitian setting: a natural parameter for sparsity could still be \begin{equation}\sigma_*:=\sup_{i,j}\sqrt{\mathbb{E}[|a_{ij}|^2]},
\end{equation} and the maximal row / column sum of $S$, \begin{equation}\label{maximalrowcolumn}\sigma:=\max\left(\max_i\sqrt{\sum_j \mathbb{E}[|a_{ij}|^2]},\quad \max_j\sqrt{\sum_i \mathbb{E}[|a_{ij}|^2]}\right),\end{equation} would still be a good upper bound for $\rho(A)$ in many circumstances when $A$ has a good structure.

These two parameters $\sigma$ and $\sigma_*$ are good candidates to bound $\rho(A)$ when $A$ is the random band matrix \eqref{blockcanonicalform}, and more generally (informally speaking) when $S$ has some regular patterns. However they will be less precise when the pattern of $S$ is highly irregular. This is yet another fundamentally non-Hermitian phenomena, which to our best knowledge has no analogue in the Hermitian context of bounding large singular values. An important example was studied in \cite{alt2021spectral} where it assumed that, for some $0<c<C$ fixed,
\begin{equation}\label{entryvariance}
\frac{c}{n}\leq\mathbb{E}[|a_{ij}|^2]\leq \frac{C}{n},
\end{equation} and assuming sufficient moments on the entry of $A$, then for any $\epsilon>0$ we have with high probability
\begin{equation}\label{whatboundrhoas}
\rho(A)\leq \rho(S)+n^{-1/2+\epsilon}.
\end{equation}
We will make a mild improvement to this bound in Theorem \eqref{spectralradiuslargedeviation}. From this result one may be tempted to bound $\rho(A)$ by $\rho(S)$ for an arbitrary variance $S$ without assuming \eqref{entryvariance}: this is clearly a too general statement which may not be provable without any structure on $S$. Instead, we introduce
another terminology, the long time control of $S$ by $\sigma$ in Definition \eqref{longtimeshorttime}, which provides a much better control of $\rho(A)$ when $S$ is far from being doubly stochastic.

Many other related results are proved along the lines. We summarize the main results of this paper into the following four categories:

\begin{enumerate}
    \item In Theorem \ref{convergencespectral} we show that whenever $\sigma_*\sqrt{\log n}\to 0$ then $\rho(A)$ can be bounded by $\sigma(1+\epsilon)$ with high probability. This can be compared with \eqref{whatistracywidom} which shows in the same range of $\sigma_*$, we have $\mathbb{E}\|A\|\leq 2(1+\epsilon)\sigma$. That is, we remove the factor 2 on the bound of $\rho(A)$ from the bound on $\|A\|$ all the way down to the optimal scale $\sigma_*\sqrt{\log n}\to 0$ (optimality is shown in Theorem \ref{unboundedspectralradius}).
    \item In Theorem \ref{Theorem1.6a12} we prove small deviation inequalities for $\rho(A)$ with a precision close to the optimal scale as predicted in \eqref{fluctuationedge}: we prove fluctuation up to the scale $\sigma_*\log n$ and the bound is effective whenever $\sigma_*\log n\to 0$. A better parameter is invented in Definition \ref{longtimeshorttime} to bound $\rho(A)$ in leading order, replacing $\sigma$ in \eqref{whatboundrhoas} to better capture non-normal fluctuations. Further, when $S$ satisfies \eqref{entryvariance} we prove in Theorem \ref{spectralradiuslargedeviation} a small deviation inequality that improves the scale of \eqref{whatboundrhoas}.

    \item When the variance $S$ is doubly stochastic and $A$ has Gaussian entries, we prove in Theorem \ref{theorem1.5subgaussianconcentration} a large deviation inequality for $\rho(A)$. This appears to be the first large deviation inequality for $\rho(A)$ capturing the correct dependence on $\sigma_*$.
    \item An important property of random matrix with i.i.d. entries \cite{WOS:000435416700013}, \cite{bordenave2021convergence} is that its spectral radius remains bounded even when the entries do not have a fourth moment. In Theorem \ref{upperboundsecondmoment} we generalize this to show, when $\sigma_*=O(n^{-1/2}),$ then $\rho(A)$ is also bounded by $\sigma$ (or $\rho(S)$) with high probability when entries of $A$ are symmetric with only $2+\epsilon$ finite moments.
\end{enumerate}

In the following we state the main results of this paper, divided into four different categories.

\subsection{Convergence of spectral radius under optimal sparsity}
The first result in this paper states that $\rho(A)$ can be well controlled by  $(1+\epsilon)\sigma$ whenever $\sigma_*\ll(\log n)^{-1/2}$. Compared to \eqref{whatistracywidom} which shows $\rho(A)\leq \|A\|\leq 2(1+\epsilon)\sigma$ whenever $\sigma_*\ll(\log n)^{-1/2}$, the main interest is we remove the factor 2 and capture non-commutativity at optimal sparsity.

\begin{theorem}\label{convergencespectral}(Convergence of spectral radius) Let $A=(b_{ij}g_{ij})$ be an $n\times n$ random matrix with independent entries. Assume that $b_{ij}$ are some fixed scalars and $g_{ij}$ are i.i.d. random variables that are either real Gaussian with distribution $\mathcal{N}_\mathbb{R}(0,1)$, or complex Gaussian with distribution $\mathcal{N}_\mathbb{C}(0,1)$. 
 Denote by $$\sigma:=\max\left(\sup_{i\in[n]}\sum_{j=1}^nb_{ij}^2,\sup_{j\in[n]}\sum_{i=1}^n b_{ij}^2\right),\quad \sigma_*=\sup_{i,j}b_{ij}$$
 and assume $\sigma/\sigma_*\leq n$.
Then there is a universal constant $C>0$ such that, for any $\epsilon>0$, we have the non-asymptotic bound
$$
\mathbb{E}[\rho(A)]
\leq (1+\epsilon)(\sigma+\frac{6\sigma_*}{\sqrt{\log(1+\epsilon})}\sqrt{\log n})\left(1+C\frac{\sqrt{6\frac{\log n}{\log(1+\epsilon)}}}{\sqrt{(\frac{\sigma}{\sigma_*})^2+6\frac{\log n}{\log(1+\epsilon)}}}\right).
$$
In particular, when $\sigma_*=o(\frac{1}{\sqrt{\log n}})$, then almost surely the outliers are absent:
  $$
\mathbb{P}(\lim\sup_{n\to\infty}\rho(A)>\sigma)=0.
  $$
  More generally, all the statements are true whenever the distributions of $g_{ij}$  satisfy
  
  \begin{enumerate}
    \item $\mathbb{E}[g_{ij}]=0, \quad \mathbb{E}[|g_{ij}|^2]=1$, $g_{ij}$ has a symmetric distribution:$$g_{ij}\overset{\text{law}}{\equiv}-g_{ij}.$$
    \item (Sub-Gaussian tail) For each $m\in\mathbb{N}_+$, \begin{equation}\label{gaussianmomentsrealgaussian}\mathbb{E}[|g_{ij}|^{2m}]\leq (\text{const}\cdot m)^m.\end{equation}
\item When $g_{ij}$ are complex-valued, we further require that the law of $g_{ij}$ is rotational invariant, i.e. $g_{ij}$ has the same distribution as $e^{2\pi i\mathcal{U}}g_{ij}$ where $\mathcal{U}\sim\operatorname{Unif}([0,1])$. 
\end{enumerate}
\end{theorem}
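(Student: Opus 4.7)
The plan is to apply the trace moment method. The starting point is the deterministic inequality $\rho(A)^{2k}\le \operatorname{tr}(A^k(A^*)^k)$, valid for every positive integer $k$: since $\rho(A^k)=\rho(A)^k$ and $\rho(B)^2\le \|B\|^2 = \rho(BB^*)$ for any matrix $B$, we have $\rho(A)^{2k}\le \rho(A^k(A^*)^k)$, and the trace dominates the spectral radius because $A^k(A^*)^k$ is positive semidefinite. It thus suffices to bound $M_k:=\mathbb{E}[\operatorname{tr}(A^k(A^*)^k)]$ and then apply Markov's inequality; choosing $k$ of order $\log n/\log(1+\epsilon)$ absorbs the resulting prefactor $n^{1/(2k)}$ into $(1+\epsilon)^{1/2}$ when the $2k$-th root is extracted.

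Expanding the trace,
\begin{equation}
M_k = \sum_{i_0,\ldots,i_{2k-1}\in [n]} \mathbb{E}\bigl[a_{i_0 i_1}\cdots a_{i_{k-1}i_k}\,\overline{a_{i_{k+1}i_k}}\cdots \overline{a_{i_0 i_{2k-1}}}\bigr],
\end{equation}
which is a sum over ``doubled'' closed walks of length $2k$: the first $k$ steps traverse $A$-edges forward and the last $k$ steps traverse $A^*$-edges backward. By independence and the symmetry (respectively rotational invariance) of the $g_{ij}$, the expectation vanishes unless each directed edge is visited an appropriate even number of times; the sub-Gaussian moment bound $\mathbb{E}|g_{ij}|^{2m}\le (Cm)^m$ of \eqref{gaussianmomentsrealgaussian} controls high-multiplicity contributions. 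A careful vertex-by-vertex enumeration produces a bound of the form
\[
M_k \le n\sigma^{2k}p(k) + n\sum_{j\ge 1} \sigma^{2k-2j}\sigma_*^{2j}q_j(k),
\]
with $p(k)$ and the $q_j(k)$ growing at most subexponentially in $k$. The leading term is produced by walks whose underlying multigraph is a tree with each edge used exactly twice: summing out a pendant vertex costs only a single row or column sum $\sum_j b_{ij}^2 \le \sigma^2$ per tree edge. Walks carrying $j$ excess edges lose $j$ vertex summations and instead pay $\sigma_*^{2j}$ from the uniform variance bound, giving after optimization in $k$ the additive correction of order $\sigma_*\sqrt{\log n/\log(1+\epsilon)}$ appearing in the theorem.

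The main obstacle is eliminating the factor $2$ that is present in the operator-norm bound \eqref{whatistracywidom}. For $\|A\|^{2k}$ controlled by $\operatorname{tr}((AA^*)^k)$, the alternating $A/A^*$ edge structure admits the full Catalan family of non-crossing pair partitions, producing a leading factor $4^k\sigma^{2k}$ and hence the factor $2$ in front of $\sigma$. For $\rho(A)$, the asymmetric ``forward-then-backward'' structure of $A^k(A^*)^k$ rigidly constrains how the two halves of the walk can pair, suppressing this Catalan proliferation to polynomial order and leaving only the subexponential factor $p(k)$. Carrying out this refined pairing analysis in the inhomogeneous setting, while tracking the variance weights $b_{ij}^2$ precisely enough to always contract row or column sums $\sigma^2$ rather than the uniform $\sigma_*^2$ whenever a tree edge is summed out, is the delicate combinatorial heart of the argument. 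The almost-sure statement about the absence of outliers is then a standard consequence of the quantitative bound together with Markov and Borel--Cantelli.
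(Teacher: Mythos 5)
Your broad strategy---bounding $\rho(A)^{2k}$ by $\operatorname{tr}(A^k(A^*)^k)$, taking $k\asymp\log n/\log(1+\epsilon)$ and appealing to Markov, and observing that the forward-then-backward structure of $A^k(A^*)^k$ kills the Catalan proliferation that forces the factor $2$ in operator-norm bounds---is the correct starting point and matches the paper's heuristic exactly. But there are two genuine gaps between your sketch and an actual proof.

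First, you never confront the inhomogeneity head on. The paper does not attempt a direct vertex-by-vertex enumeration of paths in the $n\times n$ inhomogeneous matrix; instead it uses a \emph{compression argument} in the spirit of Bandeira--van Handel. After normalizing $\sigma_*=1$ and classifying closed walks by their shape $\mathbf{s}$ (the pattern of first-return times), the inhomogeneous contribution per shape is shown to be at most $\sigma^{2(m(\mathbf{s})-1)}$, where $m(\mathbf{s})$ is the number of distinct vertices; meanwhile the same shape contributes $(r-1)\cdots(r-m(\mathbf{s})+1)$ to the trace of a homogeneous $r\times r$ Gaussian model $Y_r$. Choosing $r=\lceil\sigma^2\rceil+p$ makes the homogeneous contribution dominate term by term, yielding $\mathbb{E}\operatorname{Tr}(A^p(A^*)^p)\le \tfrac{n}{r}\mathbb{E}\operatorname{Tr}(Y_r^p(Y_r^*)^p)$. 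This is how the row/column sum $\sigma^2$ is made to appear. Your claim that ``summing out a pendant vertex costs only a single row or column sum'' gestures at something similar, but it is not straightforward to carry out directly on the inhomogeneous walk, because the front half of the walk is tied to $b_{ij}$ while the back half is tied to $b_{ji}$, so naive leaf-peeling does not decouple rows from columns the way it does for $(AA^*)^k$. The shape comparison is what makes this precise.

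Second, after compressing, the matrix $Y_r$ has dimension $r=\lceil\sigma^2\rceil+p$, and one needs its trace moments up to power $p$, i.e.\ up to order comparable to the compressed matrix size. This is the regime where the paper invokes Theorem~\ref{mainresultveryhighbounds}, whose proof is the Feldheim--Sodin non-backtracking/diagram-automaton machinery adapted to $\operatorname{Tr}(G^k(G^*)^k)$. Your assertion that $M_k\le n\sigma^{2k}p(k)+n\sum_{j\ge1}\sigma^{2k-2j}\sigma_*^{2j}q_j(k)$ with ``subexponential'' $p,q_j$ is exactly the hard estimate, and the proposal leaves it unproved; moreover, to deduce the theorem the $j$-dependence of $q_j(k)$ must be controlled precisely enough that the sum $\sum_j(\sigma_*/\sigma)^{2j}q_j(k)$ behaves like $\exp(Ck\cdot k/(\sigma/\sigma_*)^2)$, which is the content of the analysis at the threshold $\sigma_*\sqrt{\log n}\asymp 1$ where $(\sigma/\sigma_*)^2\asymp k$. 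Without either the compression step or an explicit combinatorial scheme (non-backtracking reduction, weighted diagram counting, treatment of backtracks, self-loops and forests as in the paper's Section~2.2), the proposal is a plan for a proof but not a proof.
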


Here we briefly elaborate on what we mean by outliers of $A$, and why we assume that the bulk of the eigenvalues should lie in the ball $B(0,\sigma)$ in the complex plane. When A has i.i.d. entries, it is known \cite{WOS:000281425000010} that the empirical spectral measure of $A$ converges to the circular law, the uniform measure on $B(0,\sigma)$. For sparse i.i.d. matrices, the circular law was also proven in \cite{rudelson2019sparse}, improving \cite{wood2012universality} and \cite{basak2019circular}. It is a well-known conjecture to prove that even when $A$ has inhomogeneous entries, we also have convergence of the ESD of $A$ to the circular law under reasonable assumptions. No general solution to this conjecture is known up to date, for lack of sufficient technical machinery to control $\sigma_{min}(A)$, the smallest singular value of $A$. However, it is still widely believed that the ESD converges to the circular law in the setting of Theorem \ref{convergencespectral} when the variance profile of $A$ is doubly stochastic ($\sum_j b_{ij}^2=\sum_i b_{ij}^2=1$) \cite{tikhomirov2023pseudospectrum}, and thus we consider eigenvalues outside $B(0,\sigma)$ as outlying eigenvalues. Of course, we conclude by mentioning that convergence of ESDs does not imply absence of outliers, although absence of outliers is much easier to prove than convergence of ESDs for non-Hermitian matrices.

An associated problem with Theorem \ref{convergencespectral} is whether we can prove an analogous lower bound for $\mathbb{E}[\rho(A)]$ of the form $\mathbb{E}[\rho(A)]\geq c\sigma$ for some $c>0$. Unfortunately we have not been able to do so, and to our best knowledge the only way to establish such a lower bound is to prove convergence of ESDs of A to a limiting measure (see the last paragraph). This is in stark contrast to the case of a symmetric (Hermitian) matrix $B$ where by the trace moment method we have $\|B\|\leq (\operatorname{tr}(B^p))^\frac{1}{p}\leq n^\frac{1}{p}\|B\|$, so that the computation of $\operatorname{tr}(B^n)$ automatically gives rise to a lower bound on $\|B\|$. Still, we claim that $\sigma>0$ is the natural parameter in Theorem \ref{convergencespectral} for $\rho(A)$ because the circular law heuristic suggests so.

We note that a similar result was proven in \cite{benaych2020spectral}, Theorem 2.11 which also captures the optimal scale on $\sigma_*$ and does not require $g_{ij}$ to have a symmetric law, but made a flatness assumption in \cite{benaych2020spectral}, equation (2.3)
which rules out random band matrices \eqref{blockcanonicalform} and all examples when the variance of some entry is $\Omega(n^{-\epsilon})$ for some $\epsilon\in(0,\frac{1}{10})$. In other words, our method captures the most extreme case where the variance is concentrated on a few entries (band matrices, etc.), while the method in \cite{benaych2020spectral} is more useful for the random graph model where the variance profile is homogeneous.

In Appendix \ref{appendix1} we show in Theorem \ref{unboundedspectralradius} that our assumption $\sigma_*=o(\frac{1}{\sqrt{\log n}})$ is sharp for convergence of $\rho(A)$ by a diagonal Gaussian matrix example.

\subsection{Small deviation bounds}
Theorem \ref{convergencespectral} captures convergence of $\rho(A)$ under optimal sparsity, at the order $O(1)$, but does not capture the fluctuations at a smaller $o(1)$ scale. Now we further develop small deviation inequalities for the spectral radius up to the almost optimal scale predicted by \eqref{fluctuationedge}, represented in terms of $\sigma_*$. We consider slightly larger $\sigma_*$, and thus weaken the sub-Gaussian tail assumption by a sub-exponential tail, and remove the requirement that the random variables have a symmetric distribution.

\subsubsection{Upper bound via row sums and long-time averages}

\begin{Definition}\label{generalmatrixmodel}(General matrix model)
Let $X=(x_{ij})$ be a $n\times n$ square matrix with independent entries, and each $x_{ij}$ satisfies
\begin{enumerate}
    \item $\mathbb{E}[x_{ij}]=0,\quad \mathbb{E}[|x_{ij}|^2]=1$;
    \item (Sub-exponential tail) For a fixed constant independent of $i,j\in [n]$ we have, for each $m\in\mathbb{N}_+$, \begin{equation}\label{gaussianmoments}\mathbb{E}[|x_{ij}|^{m}]\leq (\text{const}\cdot m)^{m}.\end{equation}
\end{enumerate}
 Let $(b_{ij})_{i,j=1}^n$ be non-negative scalars, and define the inhomogeneous matrix $$A=(b_{ij}x_{ij})_{i,j=1}^n.$$ Define the variance matrix $S:=(b_{ij}^2)_{i,j=1}^n$ and define the sparsity parameter
 $$
\sigma_*=\sigma_*(A)=\sup_{i,j}b_{ij}.
 $$
\end{Definition}

We introduce a parameter, in place of $\sigma$ \eqref{maximalrowcolumn}, that controls the spectral radius:

\begin{Definition}(Long-time control)
\label{longtimeshorttime} Let $S$ be an $n\times n$ square matrix with non-negative entries.
We say $S$ is long-time controlled by a parameter $\sigma>0$ [$\sigma$ can be $n$-dependent but we assume $\sigma=O(1)$] if there exists a fixed (independent of $n$) constant $C=C(
\sigma)>0$ such that
$$ \sup_i
\sum_{j=1}^n[S^k]_{ij}\leq C\sigma^{2k},\quad \sup_i
\sum_{j=1}^n[(S^*)^k]_{ij}\leq C\sigma^{2k},\quad\text{ for all } k\geq 1,
$$
where $[S^k]_{ij}$ denotes the $(i,j)$-th entry of the product matrix $S^k$, and $S^*$ is the transpose of the matrix $S$.    
\end{Definition}
The notion of Definition \ref{longtimeshorttime} is more flexible than the $\ell^2$ norm of columns and rows of $S$, as shown in the following most simple example:
\begin{example}\label{bandmatrix}(Doubly stochastic) When $A$ has doubly stochastic variance profile, so that 
\begin{equation}\label{doublestochastic}\sum_{j=1}^n b_{ij}^2\leq 1 \quad \text{for all } i\in [n]; \quad \sum_{i=1}^n b_{ij}^2\leq 1\quad \text{for all } j\in[n],
\end{equation} $S$ is long-time controlled by 1, as we can simply take $\sigma=1$ and $C=1$ in Definition \eqref{longtimeshorttime}.
\end{example}

\begin{example}\label{example206}(Heterogeneous block matrix) Consider two independent $n\times n$ random matrices $T_1,T_2$ with mean zero, unit variance sub-Gaussian entries. Fix $\lambda_1,\lambda_2>0$ and consider the $(2n\times 2n)$ matrix 
$$
T:=n^{-1/2}\begin{pmatrix}
    0&\lambda_1T_1\\\lambda_2T_2&0
\end{pmatrix}.
$$ The largest $\ell^2$ norm of  any row (or column) of the variance profile of $T$ is $\max(\lambda_1,\lambda_2)$, but it is easy to see the variance profile of $T$ is long-time controlled by $\sqrt{\lambda_1\lambda_2}$, so we expect to control $\rho(T)$ by $\sqrt{\lambda_1\lambda_2}$, which is much smaller than $\sup(\lambda_1,\lambda_2)$ when $\lambda_1,\lambda_2$ are not close. 
    
\end{example}

A further example is deferred to \ref{example12}. These examples illustrate that the long-term control in Definition \ref{longtimeshorttime} can better capture $\rho(A)$ than $\sigma$ in \eqref{maximalrowcolumn}. 
The underlying reason is that for a  non-Hermitian random matrix $A$, its spectral radius $\rho(A)$ is less sensitive to the maximum $\ell^2$ norm of each single row, compared to the operator norm $\|A\|$.

Our main theorem in this direction can be stated as:

\begin{theorem}(Small deviations)\label{Theorem1.6a12} In the setting of Definition \ref{generalmatrixmodel} and \ref{longtimeshorttime}, 
let $\underline{\sigma}>0$ be such that $S$ is long-time controlled by $\underline{\sigma}$. Assume that $\sigma_*\ll (\log n)^{-1}$, then we have the following estimate: for any $t>0$ we have 
$$
\mathbb{P}\left(\rho(A)\geq \underline{\sigma}(1+t\sigma_*)\right)\leq C_0ne^{-Ct},
$$ where $C_0,C>0$ are two positive constants depending only on the sub-exponential moments of $x_{ij}$ in Definition \ref{generalmatrixmodel} and the constant $C(\underline{\sigma})$ in Definition \ref{longtimeshorttime}.

 In particular, whenever $A$ satisfy Definition \ref{generalmatrixmodel} (so that $x_{ij}$ has sub-exponential tails and not necessarily symmetric distribution), and $\sigma_*\ll(\log n)^{-1}$, then $$\mathbb{P}(\lim\sup_{n\to\infty}\rho(A)\leq\underline{\sigma})=1.
    $$
\end{theorem}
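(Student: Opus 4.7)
The plan is the high-moment method, bounding the spectral radius by a trace via
\[
\rho(A)^{2p} \;\le\; \|A^p\|^2 \;\le\; \|A^p\|_{HS}^2 \;=\; \mathrm{tr}\bigl(A^p (A^*)^p\bigr),
\]
and then applying Markov's inequality:
\[
\mathbb{P}\bigl(\rho(A) \ge \underline{\sigma}(1+t\sigma_*)\bigr) \;\le\; \frac{\mathbb{E}\,\mathrm{tr}(A^p(A^*)^p)}{\underline{\sigma}^{2p}(1+t\sigma_*)^{2p}}.
\]
The target bound $C_0 n e^{-Ct}$ with $(1+t\sigma_*)^{2p}\gtrsim e^{c\, p t\sigma_*}$ dictates the choice $p \asymp 1/\sigma_*$, which is $\gg \log n$ by hypothesis. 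The entire task then reduces to showing $\mathbb{E}\,\mathrm{tr}(A^p(A^*)^p) \le C_1\, n\, \underline{\sigma}^{2p}$ for a constant $C_1$ independent of $p$ in this regime.

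\textbf{Trace expansion and the leading term.} Expand
\[
\mathrm{tr}(A^p(A^*)^p) = \sum_{i_0,\ldots,i_p,\;j_1,\ldots,j_{p-1}} \prod_{k=0}^{p-1} A_{i_k i_{k+1}}\,\overline{A_{j_k j_{k+1}}}, \qquad j_0:=i_0,\; j_p:=i_p,
\]
which is a double walk of length $p$ from $i_0$ to $i_p$. Since $x_{ij}$ are independent and mean-zero, only index configurations whose associated multigraph of $2p$ edges has every edge of multiplicity at least $2$ survive. The simplest such configuration is the exact retracing $j_k=i_k$, contributing
\[
\sum_{i_0,\ldots,i_p} \prod_{k=0}^{p-1} b_{i_k i_{k+1}}^2 \;=\; \sum_{i_0,i_p} [S^p]_{i_0 i_p} \;\le\; n\cdot C(\underline{\sigma})\,\underline{\sigma}^{2p},
\]
by the long-time control assumption. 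This is precisely where Definition~\ref{longtimeshorttime} enters: it is the natural replacement for $\sigma$ because the trace expansion produces row sums of powers of $S$, not of $S$ itself, and retracing walks contribute $[S^p]_{ij}$ rather than $([S]_{ij})^p$ or similar.

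\textbf{Error terms, combinatorics, and conclusion.} For the remaining (non-trivially paired) walks, classify by the underlying combinatorial topology: a double walk of genus $g$ with some edges of multiplicity $r\ge 3$ costs a factor of $\sigma_*^{r-2}\cdot (Cr)^r$ per such edge (from the sub-exponential moment bound $\mathbb{E}|x_{ij}|^r\le (Cr)^r$ replacing $\mathbb{E}|x_{ij}|^2=1$), and saves one free vertex index per additional identification. The backtracking (tree-like) walks are enumerated by Catalan-type numbers, bounded by $4^p$, and each non-tree excess produces a factor of order $\sigma_* p\cdot(\text{const})$; since $p\sigma_*=O(1)$ in our choice, these corrections multiply the leading bound by an $O(1)$ constant independent of $n$. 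Putting it together gives $\mathbb{E}\,\mathrm{tr}(A^p(A^*)^p) \le C_1 n \underline{\sigma}^{2p}$, and substituting into Markov yields the claimed tail $C_0 n e^{-Ct}$ after absorbing $(1+t\sigma_*)^{2p}\ge e^{c p t\sigma_*}\ge e^{c't}$. The almost-sure asymptotic $\limsup \rho(A)\le\underline{\sigma}$ follows by taking $t$ to be a large multiple of $\log n$ and Borel--Cantelli.

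\textbf{Main obstacle.} The delicate point is the combinatorial bookkeeping of the non-trivial pairings. Unlike the Hermitian case, where the Wigner moment method gives a clean Catalan enumeration via planar pair partitions, here one must track double walks $i_k/j_k$ with fixed endpoints and non-symmetric edge orientations, and balance three competing small quantities: the genus-based saving of a vertex index (gaining $n^{-1}$), the edge-multiplicity cost (losing $\sigma_*^{r-2}(Cr)^r$), and the dependence on $p$ that must remain uniform down to $p\sim 1/\sigma_*$. Carrying this out without losing the constant in front of $\underline{\sigma}^{2p}$, and in particular showing that all non-backtracking contributions are truly of the form $O(\sigma_* p)$ per excess rather than $O(\sigma_* p\cdot (\log p)^\alpha)$, is the core technical work.
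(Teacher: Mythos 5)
Your high-level strategy — bound $\rho(A)^{2p} \le \operatorname{tr}(A^p(A^*)^p)$, take $p\asymp \sigma_*^{-1}\gg\log n$, show $\mathbb{E}\operatorname{tr}(A^p(A^*)^p)\le C_1 n\underline{\sigma}^{2p}$, and close with Markov plus Borel--Cantelli — is exactly the paper's. You also correctly identify that the retracing walk $j_k=i_k$ produces $\sum_{i_0,i_p}[S^p]_{i_0i_p}$, which is precisely where long-time control enters and why it replaces $\sigma$.

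However, the sketch of the error-term combinatorics contains an error that is incompatible with your own claimed bound. You write that backtracking (tree-like) double walks are ``enumerated by Catalan-type numbers, bounded by $4^p$.'' If that were so, already the leading term would carry a $4^p$ prefactor and you could not conclude $\mathbb{E}\operatorname{tr}(A^p(A^*)^p)\le C_1 n\underline{\sigma}^{2p}$ with a $p$-independent $C_1$. The Catalan blowup is a Hermitian phenomenon: for $\operatorname{tr}(W^{2p})$ with a Wigner matrix, the undirected edges admit Dyck-path matchings. In the non-Hermitian $\operatorname{tr}(A^p(A^*)^p)$ expansion the entries are independent (not symmetric), so a directed edge $(i,j)$ can only pair with another occurrence of $(i,j)$ in the correct conjugation slot. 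For a self-intersection-free path $(i_0,\ldots,i_p)$ the pairing is \emph{forced}: the second half must retrace the first half exactly, so there is a unique admissible $(j_k)$ and no Catalan factor at all — this is why the paper gets exactly $Cn\sigma^{2p}$ from the leading class $n_1=p$. This structural feature (absence of Catalan enumeration) is essential, not incidental.

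Beyond that, the error-term accounting is not a matter of ``saves one free vertex index per additional identification'' as in the homogeneous i.i.d.\ case. In the inhomogeneous setting the savings for a self-intersection is realized by pinning one endpoint of a subpath and bounding one edge weight $b^2_{\cdot\cdot}\le\sigma_*^2$ rather than by an $n^{-1}$ factor; the paper decomposes the path into $J+1$ subpaths $P^1,\ldots,P^{J+1}$ (with $J=\sum_{k\ge2}(k-1)n_k$), sums each subpath with long-time control to get $\sigma^{2m_r-2}\sigma_*^2$, and arrives at the ratio $(\sigma_*/\sigma)^{2J}$ times combinatorial factors that sum to $O(1)$ when $p\lesssim\sigma/\sigma_*$. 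This decomposition — together with the odd/even index bookkeeping inherited from the proof of the homogeneous Theorem on $\operatorname{tr}(A^p(A^*)^p)$ and the separate treatment of non-symmetric entry distributions via an even-path gluing argument — is the actual content of the proof, and it is not reconstructed in the proposal. Your own ``Main obstacle'' paragraph concedes this, so the proposal should be graded as identifying the right route without completing it, and with a genuine error in the non-Hermitian combinatorics that would need to be removed before the sketch could become a proof.
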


Theorem \ref{Theorem1.6a12} captures the fluctuation of $\rho(A)$ on an almost optimal scale. Taking, for example, $\underline{\sigma}=1$ and $\sigma_*=n^{-1/2}$, then Theorem \ref{Theorem1.6a12} implies with high probability $\rho(A)\leq 1+\frac{(\log n)^{1+c}}{\sqrt{n}}$ for any $c>0$ and the possibility that $\rho(A)\geq 1+n^{-d}$ is $O(ne^{-n^{1/2-d}})$ for any $0<d<1/2$. Compared to the  three terms expansion \eqref{fluctuationedge}, these estimates are valid on an almost optimal scale, and are generalized to the most general inhomogeneous model in Theorem \ref{Theorem1.6a12} by adjusting $\underline{\sigma}$ and $\sigma_*$ accordingly. However, our method is unlikely to recover the exact form of the three-term expansion \eqref{fluctuationedge} due to the moment method itself. When we apply the moment method, we bound $$\rho(A)^m=\rho(A^m)\leq \|A^m\|=\|A^m(A^*)^m\|^{1/2}\leq\operatorname{tr}(A^m(A^*)^m)^{1/2}$$ for suitable $m\in\mathbb{N}_+$, but the first and last inequalities are not exact and the inaccuracy in these inequalities will prevent us from the sharp asymptotic in \eqref{fluctuationedge}. In particular, the first inequality $\rho(A^m)\leq\|A^m\|$ is far from exact, and we are not aware of a method to make it into a more accurate form.

 In the last statement of Theorem \ref{Theorem1.6a12} we proved almost sure boundedness of $\rho(A)$ up to $\sigma_*\ll(\log n)^{-1}$ without assuming a symmetric entry distribution, but slightly missed the range $\sigma_*\ll(\log n)^{-1/2}$ in Theorem \ref{convergencespectral}. (As we only require a sub-exponential moment, $\sigma_*\ll(\log n)^{-1}$ is likely to be an optimal condition here). For adjacency matrix of homogeneous directed regular graphs, the optimal scale $\sigma_*\ll (\log n)^{-1/2}$ was reached in \cite{benaych2020spectral}, Theorem 2.11 under the assumption that the variance profile is relatively flat. For symmetric matrices an analogous result was previously derived in \cite{latala2018dimension}, Example 4.10.

\subsubsection{Upper bound via spectral radius}
To further illustrate a bound on $\rho(A)$ taking into account the non-Hermitian structure of its variance matrix $S$, we show in the following that $\rho(A)$ can also be well-controlled by $\sqrt{\rho(S)}$ given that the variance $S$ is relatively flat, that is, if $S$ admits a two-sided bound \eqref{twosidedbounds}. This was previously proven in \cite{alt2021spectral} and here we provide a different perspective and slight strengthening.

\begin{theorem}\label{spectralradiuslargedeviation} Let $X$ be the random matrix in Definition \ref{generalmatrixmodel} with variance profile $S$.
    Assume that we can find fixed $0<c<C$ such that 
    \begin{equation}\label{twosidedbounds}
\frac{c}{n}\leq b_{ij}^2\leq \frac{C}{n}\end{equation}
     for each $i,j\in [n]$, and we let $\rho(S)$ denote the spectral radius of $S$. Then we have the small deviation estimate:
for any $t>0$ we have 
$$
\mathbb{P}\left(\rho(A)\geq \sqrt{\rho(S)}+\frac{t}{\sqrt{n}})\right)\leq C_0n^2e^{-C_1t},
$$ where $C_0,C_1>0$ are two constants depending only on sub-exponential moments of $x_{ij}$ in Definition \ref{generalmatrixmodel} and the constants $c,C$ in equation \eqref{twosidedbounds}.

\end{theorem}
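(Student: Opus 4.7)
The strategy is the high-trace moment method adapted to the non-Hermitian setting. The starting point is the deterministic inequality
$$\rho(A)^{2m} = \rho(A^m)^{2} \le \|A^m\|^{2} = \|A^m (A^*)^m\| \le \operatorname{tr}\bigl(A^m (A^*)^m\bigr),$$
valid for every $m \in \mathbb{N}_+$. Combined with Markov's inequality this reduces the task to a sharp upper bound on $\mathbb{E}\operatorname{tr}(A^m (A^*)^m)$ for a well-chosen $m$. Expanding the trace in entries of $A$, this expectation becomes a sum over pairs of length-$m$ walks (one forward, one backward) sharing their endpoints, in which each directed edge traversed by an $a$-factor must be paired with at least one identical directed edge traversed by a $\bar a$-factor; each pair contributes a product of variances $b_{ij}^{2}$ together with combinatorial weights coming from the higher moments of the $x_{ij}$.

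The first key step is to isolate the dominant pair partition: the unique non-crossing matching in which the backward walk retraces the forward walk in reverse. Its contribution equals exactly $\mathbf{1}^{\top} S^{m}\mathbf{1}$. Under the two-sided bound $c/n \le b_{ij}^{2}\le C/n$, the matrix $nS$ is primitive with Perron-Frobenius spectral gap bounded away from one and Perron eigenvectors entry-wise comparable to $\mathbf{1}/\sqrt n$, which gives
$$\mathbf{1}^{\top} S^{m} \mathbf{1} \;\le\; K \, n \, \rho(S)^{m}, \qquad K = K(c,C).$$

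The second step is a F\H{u}redi--Koml\'os style argument showing that every other pairing is negligible: each additional vertex coincidence beyond the identity matching reduces the number of free indices by at least one and therefore costs a factor $n^{-1}$ against $S_{ij}\le C/n$; each over-pairing in which $k$ edge-slots collapse onto a single entry contributes at most $\mathbb{E}|x_{ij}|^{k}\le (Ck)^{k}$ via the sub-exponential tail; and the number of matchings of $2m$ edges is at most $(2m)^{m}$. Combining these estimates gives
$$\mathbb{E}\operatorname{tr}\bigl(A^{m}(A^{*})^{m}\bigr) \;\le\; K\, n\, \rho(S)^{m}\bigl(1 + O(m^{2}/n)\bigr),$$
uniformly for $m \lesssim \sqrt n$. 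Applying Markov with the optimized choice $m \asymp \sqrt{n\rho(S)}$ at the threshold $s = \sqrt{\rho(S)} + t/\sqrt n$ then yields
$$\mathbb{P}\!\left(\rho(A)\ge \sqrt{\rho(S)} + \tfrac{t}{\sqrt n}\right) \;\le\; K\, n\, \left(1 + \frac{t}{\sqrt{n\rho(S)}}\right)^{-2m} \;\le\; C_{0}\, n^{2}\, e^{-C_{1}t},$$
the extra factor $n$ absorbing logarithmic slack in the optimization and in the discrete choice of $m$.

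The main obstacle is the second step: bounding \emph{all} subdominant pairings in a way that is robust both against the inhomogeneity of $S$ and against the non-Gaussian combinatorics of sub-exponential entries. The two-sided flatness hypothesis \eqref{twosidedbounds} is used essentially twice, first to invoke Perron-Frobenius uniformly for the dominant term and second to convert each additional coincidence into a clean $n^{-1}$ gain, while the sub-exponential moment bound forces the restriction $m\lesssim \sqrt n$, which in turn dictates the fluctuation scale $t/\sqrt n$ that appears in the statement.
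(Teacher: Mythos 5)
Your overall architecture---trace--power bound, isolate the identity (non-crossing) matching, show subdominant matchings are negligible, then apply Markov at $m\asymp\sqrt n$---matches the paper's, and the dominant-term identification $\mathbf{1}^{\top}S^{m}\mathbf{1}$ is correct. Your route through Perron--Frobenius for the dominant term is a genuine alternative to the paper's much cheaper observation that $s_{i_ki_0}\ge c/n$ lets one compare $\mathbf{1}^\top S^{k}\mathbf{1}$ to $\operatorname{tr}(S^{k+1})\le n\rho(S)^{k+1}$ at the price of one extra factor of $n\rho(S)$; if your uniform-spectral-gap claim holds (it probably does under the two-sided bound, but for a non-normal $S$ it is not immediate and you state it without proof), you would even shave the bound from $n^2$ to $n$.

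However, there is a genuine gap in your treatment of the subdominant pairings, and it is precisely where the paper does most of its work. You argue that ``each additional vertex coincidence reduces the number of free indices by one and therefore costs a factor $n^{-1}$ against $S_{ij}\le C/n$.'' Run the numbers: a closed walk of length $2m$ with $J$ vertex coincidences has $m+1-J$ free indices and you bound each of the $m$ edge-weights by $C/n$, giving a contribution of order $n^{m+1-J}(C/n)^m=n^{1-J}C^m$. But the target is $n\,\rho(S)^m\cdot O(m^2/n)$, and $n^{-J}C^m \not\lesssim n^{-1}m^2\rho(S)^m$ whenever $\rho(S)<C$ strictly --- the ratio $(C/\rho(S))^m$ blows up for $m\asymp\sqrt n$. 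In other words, bounding edge weights by $C/n$ alone destroys the $\rho(S)$ structure that you recovered so carefully for the dominant term, and the subdominant terms so estimated can be exponentially larger than the dominant one. The paper's fix is exactly where the lower bound $s_{ij}\ge c/n$ is used a second time, and it is the technical heart of the proof: the path is decomposed into sub-paths $P^1,\dots,P^{J+1}$ at the self-intersections, and the two-sided flatness gives the splicing comparison
\begin{equation*}
b^2_{P^r_{m_r-1},P^r_{m_r}}\,b^2_{P^{r+1}_0,P^{r+1}_1}\;\asymp\;\frac{1}{n}\,b^2_{P^r_{m_r-1},P^{r+1}_1},
\end{equation*}
which deletes a constraint, keeps the number of free indices fixed, and \emph{stitches} the sub-paths into a single chain whose coefficient sum is again a full $S$-chain $S(p-J)=\mathbf{1}^\top S^{p-J}\mathbf{1}\lesssim n^2\rho(S)^{p-J+1}$. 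It is this stitching that restores the $\rho(S)$ factor in every subdominant term; without some device of this kind, your F\H{u}redi--Koml\'os style accounting is not sound in the inhomogeneous case, and the claimed bound $\mathbb{E}\operatorname{tr}(A^m(A^*)^m)\le Kn\rho(S)^m(1+O(m^2/n))$ does not follow from the estimates you list.
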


In particular, we show that $\rho(A)\leq\sqrt{\rho(S)}+\frac{(\log n)^{1+c}}{\sqrt{n}}$ with high probability for any $c>0$. Similar results were first proven in \cite{alt2021spectral} which shows that $\rho(A)\leq\sqrt{\rho(S)}+n^{\epsilon-1/2}$ with probability at least $1-c_{D,\epsilon}/n^D$ for any $D\in\mathbb{N}_+$. In \cite{alt2021spectral} it was only assumed that the entries have finite moments of all order, whereas we assume $x_{ij}$ has sub-exponential tails. Under this assumption we improve the bound in \cite{alt2021spectral} in two directions: first we reach a much smaller scale close to $\sqrt{\log n/n}$ as predicted by \eqref{fluctuationedge}, 
and second we have a super-polynomial bound $e^{-n^\epsilon}$ for the error of small deviation by taking $t=n^\epsilon$, improving the polynomial bound \cite{alt2021spectral}.

The assumption \ref{twosidedbounds} plays a crucial role in the proof. Adapting the proof argument a bit, \ref{twosidedbounds} could possibly be weakened to $n^{-1-\tau}\leq b_{ij}^2\leq n^{-1+\tau}$ for some sufficiently small $\tau>0$, though we do not pursue this improvement in this paper.

\subsubsection{Suppressed fluctuations: Examples}

Whereas Theorem \ref{Theorem1.6a12} provides high precision small-deviation estimates when $S$ has a good structure, we may have vastly overestimated the fluctuation (and even the spectral radius itself) when the matrix has large chunks of zero dispersed in a particular pattern. This is illustrated by the following important examples that arises in the study of product matrix process and product circular law.

\begin{theorem}\label{examplegrowthbounds}
    For each integer $n$ let $p_n\in[1,n]$ be an integer dividing $n$, with $n=p_nq_n$. Let $X_1,\cdots,X_{p_n}$ be independent $q_n\times q_n$ random matrices with i.i.d. entries having distribution $\frac{1}{\sqrt{q_n}}\xi$, with $\mathbb{E}[\xi]=0$ and $\mathbb{E}[|\xi|^2]=1$ (we do not need $\xi$ to have higher than two moments).
Consider the following $n\times n$ matrix $\mathcal{L}$
\begin{equation}\label{welinearizeit}
\mathcal{L}=\begin{pmatrix}
&X_2&&&\\&&X_3&&\\&&&\ddots&\\&&&&X_{p_n}\\X_1&&&&
\end{pmatrix}.\end{equation}Then for any $q_n\log q_n\ll  n$, for any $\epsilon>0$, with high probability we have $\rho(\mathcal{L})\leq 1+\epsilon$ for $n$ large. We also have a small deviation bound: for any $t>0$,
    $$
\mathbb{P}(\rho(\mathcal{L})\geq 1+\frac{t}{p_n})\leq q_n e^{-2t}.
    $$
\end{theorem}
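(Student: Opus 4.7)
The plan is to exploit the block-cyclic structure of $\mathcal{L}$ to reduce the spectral-radius problem to that of a product of $p_n$ independent $q_n\times q_n$ matrices. Viewing $\mathcal{L}$ as a $p_n\times p_n$ array of $q_n\times q_n$ blocks, its action on a block vector $v=(v_1,\ldots,v_{p_n})^T$ is $(\mathcal{L} v)_i = X_{i+1} v_{i+1}$ with indices taken cyclically mod $p_n$; iterating $p_n$ times yields
$$
\mathcal{L}^{p_n} \;=\; \operatorname{diag}(P_1,P_2,\ldots,P_{p_n}), \qquad P_i := X_{i+1}X_{i+2}\cdots X_{i+p_n}.
$$
Each $P_i$ is a cyclic rotation of $P_0 := X_1 X_2 \cdots X_{p_n}$, and since cyclic permutations share the same non-zero spectrum, $\rho(P_i)=\rho(P_0)$ for all $i$. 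Consequently
$$
\rho(\mathcal{L})^{p_n} \;=\; \rho(\mathcal{L}^{p_n}) \;=\; \rho(P_0).
$$

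The remaining task is to bound $\rho(P_0)$, for which I would use the crude but effective Frobenius estimate $\rho(P_0)^2 \leq \|P_0\|^2 \leq \|P_0\|_F^2 = \operatorname{tr}(P_0 P_0^*)$. A direct path-counting computation using independence of the $X_i$'s and $\mathbb{E}[|(X_i)_{ab}|^2]=q_n^{-1}$ yields $\mathbb{E}[|(P_0)_{a,b}|^2] = q_n^{p_n-1}\cdot q_n^{-p_n}= q_n^{-1}$ for every $(a,b)$, and hence
$$
\mathbb{E}\bigl[\operatorname{tr}(P_0 P_0^*)\bigr] \;=\; q_n^2 \cdot q_n^{-1} \;=\; q_n.
$$
Markov's inequality then gives, for any $M>0$,
$$
\mathbb{P}\bigl(\rho(\mathcal{L})^{2p_n}\geq M\bigr) \;\leq\; \mathbb{P}\bigl(\operatorname{tr}(P_0 P_0^*)\geq M\bigr) \;\leq\; q_n/M.
$$
Setting $M=(1+t/p_n)^{2p_n}$ and comparing $(1+t/p_n)^{2p_n}$ with $e^{2t}$ (elementary: $\log(1+x)\geq x-x^2/2$) delivers the small-deviation bound in the stated form $q_n e^{-2t}$, valid in the regime $t\lesssim p_n$ where the quadratic correction is negligible. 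Taking $t=\epsilon p_n$ gives $\mathbb{P}(\rho(\mathcal{L})\geq 1+\epsilon) \leq q_n e^{-c(\epsilon)p_n}$, which tends to zero precisely when $\log q_n \ll p_n$, equivalently $q_n\log q_n \ll p_nq_n = n$.

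The substantive step is the block-diagonal identity $\mathcal{L}^{p_n}=\operatorname{diag}(P_1,\ldots,P_{p_n})$: it captures the severe non-normality of $\mathcal{L}$ and strips away the factor-of-two gap that would appear from the operator-norm bound $\rho(\mathcal{L})\leq\|\mathcal{L}\|\leq 2+o(1)$. Once this structural reduction is in place, the argument uses only $\mathbb{E}[|\xi|^2]$, matching the theorem's minimal moment hypothesis, and both claims follow from first-moment Markov. The main subtlety in finishing up is handling the comparison between $(1+t/p_n)^{2p_n}$ and $e^{2t}$ carefully enough to land on the advertised constant $2$ in the exponent; all other steps are essentially routine.
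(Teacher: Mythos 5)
Your proof is correct and takes a genuinely different (and arguably cleaner) route than the paper's. The paper expands $\mathbb{E}[\operatorname{Tr}(\mathcal{L}^{p_n}(\mathcal{L}^*)^{p_n})]$ monomial by monomial and observes that, because each of the $p_n$ independent blocks appears exactly once per half-cycle, every factor must repeat exactly twice, giving a direct count of $q_n$ for the trace. You instead exploit the structural identity $\mathcal{L}^{p_n}=\operatorname{diag}(P_1,\dots,P_{p_n})$ with each $P_i$ a cyclic rotation of $P_0=X_1\cdots X_{p_n}$, reduce $\rho(\mathcal{L})^{p_n}=\rho(P_0)$, and bound $\rho(P_0)^2\le\operatorname{tr}(P_0P_0^*)$. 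This buys you something concrete: the paper's expansion, when summed over all $p_n$ diagonal block rows of the trace, really yields $\sum_i\mathbb{E}[\operatorname{tr}(P_iP_i^*)]=p_nq_n=n$ rather than $q_n$ (the factor $p_n$ is dropped in the paper's accounting), whereas your reduction to a single $P_0$ via $\rho(\mathcal{L}^{p_n})=\max_i\rho(P_i)=\rho(P_0)$ legitimately isolates one block and gets the sharper constant $q_n$ honestly. Both proofs share the same minor looseness at the end: one has $(1+t/p_n)^{-2p_n}\ge e^{-2t}$, so the Markov step delivers $q_n(1+t/p_n)^{-2p_n}$, which is slightly weaker than the stated $q_ne^{-2t}$ and only matches it asymptotically as $t/p_n\to 0$; you flag this, the paper silently passes over it, and neither affects the $1+\epsilon$ boundedness conclusion. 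Your argument also makes transparent where the severe non-normality of $\mathcal{L}$ enters, and uses only the second moment of $\xi$, matching the hypothesis exactly.
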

Note that both claims are true if $p_n$ is so large that $q_n=o(\log n)$: in this case $\sigma_*(\mathcal{L})\gg (\log n)^{-1/2}$, so that Theorem \ref{convergencespectral} and \ref{Theorem1.6a12} are not applicable. But even in this case, we have $\rho(\mathcal{L})\to 1$ and $\rho(\mathcal{L})\leq 1+n^{-1+\epsilon}$ with high probability for any $\epsilon>0$: this scale of fluctuation is much smaller than the Gaussian matrix \eqref{fluctuationedge}. 

The matrix model \eqref{examplegrowthbounds} plays a fundamental role in the study of spectrum and outlier of product random matrices, see for example \cite{o2015products}, \cite{coston2020outliers}, \cite{han2024outliers}, \cite{han2024circular}. For large $p_n\gg\sqrt{n}$ the system exhibits an ergodic behavior, and in particular for $q_n=2$, a law of large numbers is proved for spectral radius of product matrices with only a second moment assumption \cite{MR4270263}. 

\begin{example}(Nilpotent matrices)
    Another important example where  Theorem \ref{convergencespectral} and \ref{Theorem1.6a12} are less precise is when the variance profile $S$ of $A$ is itself a nilpotent matrix. For example we can take $S$ the square matrix with entry one all along its upper diagonal and zero elsewhere. Then $A$ itself is nilpotent and $\rho(A)=0:$ this can be checked via computing high moments of $A$, even if Theorem \ref{convergencespectral} and \ref{Theorem1.6a12} are not designed to detect such degenerate circumstances. 
\end{example}

It is hard to formulate general conditions capturing pedagogically defined matrices with large chunks of zero, and the focus of the current paper is more leaning on inhomogeneous non-Hermitian matrices having a near mean-field structure. Thus we do not develop this direction further and illustrate suppressed fluctuations just via the example in Theorem \ref{examplegrowthbounds}.

\subsection{Large deviation estimates of spectral radius}

An important result in high dimensional probability states that the operator norm of a random matrix with sub-Gaussian entries concentrates around a fixed value with sub-Gaussian tails of deviation. More precisely, (see \cite{vershynin2018high}, Theorem 4.3.5) Let $A$ be an $m\times n$ random matrix with independent mean zero sub-Gaussian entries, then for any $t>0$,
$$ \|A\|\leq CK(\sqrt{m}+\sqrt{n}+t)
$$ with probability at least $1-2\exp(-t^2)$. Much more refined sub-Gaussian concentration inequalities have since been derived, see \cite{van2017spectral},\cite{bandeira2023matrix} and references therein. When $A$ has Gaussian entries, the operator norm $\|A\|$ is a Gaussian process where powerful dimension free techniques are available, see for example \cite{bandeira2016sharp}, \cite{bandeira2023matrix}.

In contrast, little seems to be known for sub-Gaussian concentration of $\rho(A)$, the spectral radius of an i.i.d. random matrix. Observe that using $\rho(A)\leq \|A\|$ and that large deviation inequalities for $\|A\|$ are well-known, the interest in proving large deviation of $\rho(A)$ mainly lies in the range $\rho(A)\leq t\leq \|A\|$. The spectral radius does not admit a variational characterization, hence the standard arguments for the operator norm are not applicable. Via computing very high moments as in the proof of Theorem \ref{convergencespectral}, we can derive large deviation estimates for $\rho(A)$, but which may be suboptimal in the power of $\sigma_*$. We propose an alternative method in the special case when $X$ has a doubly stochastic variance profile and has Gaussian entries, with optimal dependence on $\sigma_*$.

\begin{theorem}\label{theorem1.5subgaussianconcentration}(Spectral radius sub-Gaussian concentration) 
   Let $A=(a_{ij})$ be an $n\times n$ random matrix with independent mean zero entries, the variance profile $S=(s_{ij})=(\mathbb{E}[|a_{ij}|^2])_{i,j}$ is doubly stochastic . Assume that $A$ has jointly Gaussian entries and $\sqrt{\sigma_*}(\log n)^\frac{3}{4}\to 0$ as $n\to\infty$, then for any $t>0$, whenever $\sqrt{\sigma_*}(\log n)^{\frac{3}{4}}\leq \frac{t^{1.5}}{500}$ we have
    $$
\mathbb{P}(\rho(A)\geq 1+t)\leq  (1+t^{-3}) e^{-C_0\frac{\min\{t^2,t^3\}}{\sigma_*^2}}
    $$ where $C_0$ is some universal constant.
\end{theorem}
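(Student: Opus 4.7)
The strategy is to bound $\rho(A)$ by a smooth functional of $A$ amenable to Gaussian Lipschitz concentration, and then carefully balance the Lipschitz constant of that functional against its expectation. From $\rho(A)^{2m}\leq\|A^m(A^*)^m\|$ together with $\|M\|^k\leq\operatorname{tr}(M^k)$ for positive semi-definite $M$, one obtains the trace-moment inequality $\rho(A)^{2mk}\leq\operatorname{tr}((A^m(A^*)^m)^k)$ for every integer $m,k\geq 1$. Setting $F_{m,k}(A):=\operatorname{tr}((A^m(A^*)^m)^k)^{1/(2mk)}$ (equivalently, $F_{m,k}^{m}$ is the Schatten-$2k$ norm of $A^m$), it suffices to bound $\mathbb{P}(F_{m,k}\geq 1+t)$ for a judicious choice of $m,k$ depending on $t,\sigma_*,n$.

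I would control $\mathbb{E} F_{m,k}$ using the trace-moment method already deployed for the earlier theorems. Gaussian Wick reduction expresses $\mathbb{E}[\operatorname{tr}((A^m(A^*)^m)^k)]$ as a sum over pairings of the $2mk$ entries weighted by products of variances $s_{ij}$; since $S$ is doubly stochastic, the non-crossing planar pairings give the leading term and non-planar corrections are suppressed by powers of $\sigma_*^2$ provided $mk\sigma_*^2\ll 1$. Two Jensen applications (for $x\mapsto x^{1/(2k)}$ and $x\mapsto x^{1/m}$) then give an estimate of the form $\mathbb{E} F_{m,k}\leq 1+O(\log n/(mk))+O(\sigma_*\sqrt{\log n})$, and the standing hypothesis $\sqrt{\sigma_*}(\log n)^{3/4}\leq t^{3/2}/500$, combined with a suitable choice $mk\gtrsim\log n/t$, makes this bound $\leq 1+t/2$.

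Concentration of $F_{m,k}$ around its mean proceeds by Gaussian Lipschitz concentration on the good event $\mathcal{E}:=\{\|A\|\leq 2\}$, whose complement has probability $\leq e^{-cn}$ by standard Gaussian operator-norm estimates. Telescoping $A_1^m-A_2^m=\sum_{j=0}^{m-1}A_1^{j}(A_1-A_2)A_2^{m-1-j}$ combined with $\|XYZ\|_{\mathrm{HS}}\leq\|X\|\cdot\|Y\|_{\mathrm{HS}}\cdot\|Z\|$ yields $\|A_1^m-A_2^m\|_{\mathrm{HS}}\leq m\cdot 2^{m-1}\|A_1-A_2\|_{\mathrm{HS}}$ on $\mathcal{E}$. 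Since $F_{m,k}^{m}$ is dominated by the Hilbert--Schmidt norm of $A^m$ (Schatten monotonicity), and since $F_{m,k}\geq\rho(A)\geq 1/2$ with overwhelming probability, the chain rule for $y\mapsto y^{1/m}$ evaluated at $y=F_{m,k}^{m}\geq (1/2)^m$ produces a Lipschitz constant of order $\beta^m$ for $F_{m,k}$ in $\|\cdot\|_{\mathrm{HS}}$, with $\beta>1$ an absolute constant (concretely $\beta\approx 4$). Rescaling to standard Gaussian coordinates via $a_{ij}=\sqrt{s_{ij}}\,g_{ij}$ and using $s_{ij}\leq\sigma_*^2$ gives Lipschitz constant $\leq \beta^m\sigma_*$ in the $g$-variables, so Gaussian Lipschitz concentration yields $\mathbb{P}(F_{m,k}-\mathbb{E} F_{m,k}>t/2)\leq\exp(-t^2/(8\beta^{2m}\sigma_*^2))$.

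Optimizing $m$ in the final bound: for $t\geq 1$ take $m$ bounded (with $k\gtrsim\log n$), giving the sub-Gaussian tail $\exp(-Ct^2/\sigma_*^2)$; for $t<1$ the mean bound forces $m\sim\log(1/t)/\log\beta$, and the exponential Lipschitz blow-up $\beta^{2m}$ degrades the exponent to $\exp(-Ct^3/\sigma_*^2)$ after tuning $m$ against the required expectation estimate. A union bound over a dyadic grid of $m$ values together with $\mathbb{P}(\mathcal{E}^c)\leq e^{-cn}$ accounts for the $(1+t^{-3})$ prefactor. The \emph{main obstacle} is precisely this tension: the exponential Lipschitz blow-up $\beta^m$, coming from the crude telescoping bound $\|A_1^m-A_2^m\|\leq m\|A\|^{m-1}\|A_1-A_2\|$, forces the $t^3$ exponent in the small-$t$ regime, and a cleaner Lipschitz estimate that effectively replaces $\|A\|^{m-1}\approx 2^{m-1}$ with $\rho(A)^{m-1}\approx 1$ (for instance via a dilation or pseudospectral argument) would be required to remove it---this appears to be a genuine non-Hermitian difficulty beyond the reach of the trace-moment machinery alone.
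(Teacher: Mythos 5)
Your route is genuinely different from the paper's. You apply Gaussian Lipschitz concentration to the smoothed spectral-radius proxy $F_{m,k}(A)=\operatorname{tr}\bigl((A^m(A^*)^m)^k\bigr)^{1/(2mk)}$, paying an exponential penalty $\beta^m$ in the Lipschitz constant. The paper instead Hermitizes, compares $\operatorname{Sp}(\mathcal{Y}_z)$ with the spectrum of the free model $\mathcal{Y}_{\text{free},z}$ via the matrix-concentration inequality of Bandeira--Boedihardjo--van Handel (Proposition \ref{proposition4.1first}), lower-bounds $\sigma_{\min}(\mathcal{Y}_{\text{free},z})$ by solving the matrix Dyson equation \eqref{matrixdysomequationsfag} (a cubic whose shape produces the $t^{3/2}$ least-singular-value bound \eqref{gaussianboundss}, hence the $t^3$ exponent), and finishes by covering the annulus $\{1+t/4\le|z|\le 4\}$ with $O(t^{-3})$ balls of radius $t^{3/2}/400$ and union-bounding --- which is the actual origin of the $(1+t^{-3})$ prefactor.

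There are several places where your argument does not close. (i) Gaussian Lipschitz concentration requires a globally Lipschitz function, but $F_{m,k}$ is only Lipschitz on the nonconvex event $\mathcal{E}=\{\|A\|\le 2\}$ (and on the further nonconvex restriction $\{F_{m,k}\ge 1/2\}$ needed for the chain rule); this needs a careful extension/truncation argument that you do not supply, and is exactly the obstruction the paper points at when it remarks that $\rho(A)$ ``is not a Lipschitz continuous function of all the entries of $A$''. (ii) Your explanation of the $(1+t^{-3})$ prefactor as ``a union over a dyadic grid of $m$ values'' does not make sense: for fixed $t$ you select a single $m$, so no such union arises; on your route no geometric covering cost appears, and the prefactor would come out differently. (iii) The combinatorics you invoke for $\mathbb{E}\operatorname{tr}\bigl((A^m(A^*)^m)^k\bigr)$ with $m$ and $k$ both growing is not carried out anywhere in the paper --- the paper's Section \ref{section2} computes $\mathbb{E}\operatorname{tr}(A^p(A^*)^p)$, which is the case $k=1$; alternating blocks of $A$'s and $A^*$'s of length $m$ with $k$ repetitions is a genuinely different (and harder) combinatorial object, and the claim that doubly-stochasticity isolates planar pairings here is not justified. (iv) The arithmetic in your final optimization does not quite produce $t^3$: if the mean constraint forces $m\sim\log(1/t)/\log\beta$ as you assert, then $\beta^{2m}=t^{-2}$ and the exponent is $t^2/\beta^{2m}=t^4$, not $t^3$; to get $t^3$ you would need $\beta^{2m}=t^{-1}$, i.e.\ $m\sim \log(1/t)/(2\log\beta)$, and you have not argued that the mean bound can be satisfied at that smaller $m$. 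So even granting the uncomputed moment estimate, the exponent claimed falls short.
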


At this point, the reader may be bothered as to why we need Theorem \ref{theorem1.5subgaussianconcentration} after all. For the Gaussian case, we may expect the following simple argument to work: we have the computation $\mathbb{E}\rho(A)$ in Theorem \ref{convergencespectral} and since the entries are Gaussian we expect $\rho(A)$ to concentrate around $\mathbb{E}\rho(A)$. This argument works pretty well when one controls the operator norm $\|A\|$ (see \cite{bandeira2016sharp}) but does not seem to work for the spectral radius $\rho(A)$ because, in general, the spectral radius $\rho(A)$ is not a Lipschitz continuous function of all the entries of $A$, unlike the operator norm. Thus, one cannot directly apply Gaussian concentration of measure to $\rho(A)$. Instead, we will consider the resolvent of the Hermitization of $A$ and use a concentration of measure argument for the resolvent to deduce the result. 

Compared to the Hermitian case (see for example  \cite{brailovskaya2024extremal}, \cite{bandeira2016sharp}), we have reached the expected (sharp) dependence of the large deviation on $\sigma_*^{-2}$. To see why $\exp(-t^2/\sigma_*^2)$ is expected to the asymptotics for the large deviation probability, suppose (only as a heuristic argument) that we can regard $\rho(A)$ as a 1-Lipschitz continuous function of each of its entries, then the fact that the entries are scaled by a factor $\sigma_*$ would imply a Gaussian large deviation probability $\exp(-t^2/\sigma_*^2)$. However, we do not have a proof that this large deviation probability $\exp(-t^2/\sigma_*^2)$ is the sharpest one for all models. Compared to this optimal estimate, we need a higher power $t^3$ instead of $t^2$ for $t$ small, and we have an additional entropy cost factor $(1+t^{-3})$ contributing for some $t$. These additional factors all arise from the proof technique and may be removed via other methods, but these additional factors are tame enough for large $n$ and do not contribute significantly to practical applications.

The extension of Theorem \ref{theorem1.5subgaussianconcentration} to general subGaussian distributions should be possible, but the current method yields suboptimal dependence on $t$ and $\sigma_*$ in the resulting estimate, so we do not pursue it here. The restriction of a doubly stochastic variance profile is made so that the matrix Dyson equation can easily be solved. For an arbitrary variance profile, the matrix Dyson equation \eqref{matrixdysomequationsfag} may not be explicitly solvable, which makes quantitative results much harder to get.

\subsection{Upper bound in the heavy-tailed case} In the main theorems of this paper we have considered random matrices with sub-Gaussian, or sub-exponential entries. In this last section we show the theme of this work continues to hold in the heavy-tailed case, when entries of $A$ have only $2+\epsilon$ moments. The fact that $\rho(A)$ is bounded without fourth moment assumption is a peculiar feature for non-Hermitian matrices uncovered in \cite{WOS:000435416700013}, \cite{bordenave2021convergence}. So far this fact is proven only for the i.i.d. case, and in the next theorem we show the parameters $\sigma$ and $\rho(S)$ can still be a good upper bound for $\rho(A)$ in this case without a fourth moment.

\begin{theorem}\label{upperboundsecondmoment}
Let $A=(b_{ij}x_{ij})$ be an $n\times n$ random matrix where $(x_{ij})$ are i.i.d. with distribution $\xi$ such that $\xi$ is symmetric $\xi\overset{\text{law}}{\equiv}-\xi$, $\mathbb{E}[\xi]=0,\mathbb{E}[|\xi|^2]=1$ and $\mathbb{E}[|\xi|^{2+\epsilon}]<\infty$ for a given $\epsilon>0$. Denote by $S=(s_{ij})=(b_{ij}^2)$. Assume that the entries of $S$ are uniformly upper bounded: we can find a constant $C>0$ such that 
\begin{equation} \label{howdoesupperbounds}s_{ij}\leq\frac{C}{n}\text{ for each }i,j\in[n].
\end{equation}

Then the spectral radius $\rho(A)$ can be well-controlled in the following cases
\begin{enumerate}
    \item (Row/ column sum) Let $\sigma$ be defined as in \eqref{maximalrowcolumn}.
Then with probability $1-o(1)$,
$$\limsup_{n\to\infty}\rho(A)\leq \sigma.$$

    \item (Spectral radius) Assume moreover that $S$ has a lower bound: for some fixed $c>0$,
    $$
s_{ij}\geq \frac{c}{n}\text{ for each } i,j\in[n].
    $$ Then for any $\epsilon>0$, we have with probability $1-o(1)$,
   $$\rho(A)\leq \sqrt{\rho(S)}+\epsilon.$$
\end{enumerate}
\end{theorem}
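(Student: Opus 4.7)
The plan is a truncation argument: split entries of $A$ according to magnitude, apply the earlier theorems to the bounded part, and show that the heavy tail contributes negligibly to the spectral radius.

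Fix $\delta > 0$ small and set $T = n^{1/2-\delta}$. Let $y_{ij} = x_{ij}\mathbf{1}_{|x_{ij}| \le T}$ and $z_{ij} = x_{ij} - y_{ij}$; by the symmetry of $\xi$, both are mean zero and symmetric. Write $A = B + R$ with $B = (b_{ij}y_{ij})$ and $R = (b_{ij}z_{ij})$. The bounded matrix $B$ has $\sigma_*(B) \le T\sqrt{C/n} = \sqrt{C}\,n^{-\delta} \ll (\log n)^{-1/2}$, and by the $2+\epsilon$ moment assumption $\operatorname{Var}(y_{ij}) = 1 - O(T^{-\epsilon})$. Its variance profile $S^B$ therefore inherits the upper bound \eqref{howdoesupperbounds} and, in case (2), the two-sided bound \eqref{twosidedbounds}, with $\sigma(S^B)\to\sigma$ and $\rho(S^B)\to\rho(S)$. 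Since $y_{ij}$ is bounded and symmetric, it is sub-Gaussian: Theorem \ref{convergencespectral} yields $\rho(B) \le \sigma(1+o(1))$ for (1), and Theorem \ref{spectralradiuslargedeviation} yields $\rho(B) \le \sqrt{\rho(S)} + o(1)$ for (2).

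For the tail $R$, Markov's inequality gives $\mathbb{P}(|x_{ij}| > T) = O(T^{-(2+\epsilon)})$, so the expected number of non-zero entries of $R$ is $O(n^{1 - \epsilon/2 + (2+\epsilon)\delta}) = o(n)$ for $\delta$ small. A second-moment computation on the Bernoulli support pattern shows that with high probability only $o(n)$ rows and $o(n)$ columns of $R$ contain any heavy entry.

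The main obstacle is combining these bounds to control $\rho(A) = \rho(B+R)$: the operator norm $\|R\|$ can be as large as $O(T)$, and spectral radius is not continuous in $\|\cdot\|$. Following the Bordenave-Caputo-Chafa\"i decomposition for heavy-tailed i.i.d.\ matrices, I would write $R = R_1 + R_2$, where $R_1$ collects entries in rows or columns containing at least two heavy entries and $R_2$ collects isolated entries. The support of $R_1$ is contained in $o(n)$ rows and columns, so $R_1$ is a low effective rank perturbation that can be absorbed via a resolvent expansion $A - z = (B - z)\bigl(I + (B-z)^{-1}(R_1+R_2)\bigr)$ for any $|z|$ just past the desired bound, using pseudospectral estimates inherited from the near mean-field structure of $B$. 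The matrix $R_2$ has support of maximum in/out-degree one, so its non-zero eigenvalues are products of entries along directed cycles in a sparse digraph; a combinatorial moment computation of $\mathbb{E}[\operatorname{tr}(R_2^m (R_2^*)^m)]$ with $m = \Theta(\log n)$, using only the $2+\epsilon$ moment of $\xi$, gives $\rho(R_2) \to 0$ in probability. Combining these pieces yields $\rho(A) \le \rho(B) + o(1)$ and hence the claimed bounds.
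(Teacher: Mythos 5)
Your proposal diverges fundamentally from the paper's argument, which never truncates: the paper works directly with the untruncated $A$, conditions on the event $\mathcal{E}_k = \mathcal{A}_k \cap \mathcal{B}$ controlling cycle statistics and $|x_{ij}|\le n^2$, and bounds $\mathbb{E}[\rho(A)^{2k-2}\mid\mathcal{E}_k]$ via even-digraph counting at the modest scale $k\sim(\log n)^2$. The key new ingredient over \cite{WOS:000435416700013} is an averaging over a uniformly random permutation $\pi$ of the indices of $x_{ij}$ (keeping $b_{ij}$ fixed), which makes the heavy-tail cycle statistics $\mathcal{S}(\mathcal{U})$ from \eqref{definesu} usable even when the coefficients $b_{ij}$ are inhomogeneous, and the coefficient contribution is then bounded separately using $s_{ij}\le C/n$ to reduce to $\sigma^x$ or $S(x)$ as in the proofs of Theorems \ref{Theorem1.6a12}--\ref{spectralradiuslargedeviation}.

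Your truncation-and-perturbation route has two gaps that I think are fatal as written. First, with $T=n^{1/2-\delta}$ the truncated entry $y_{ij}=x_{ij}\mathbf{1}_{|x_{ij}|\le T}$ has $\mathbb{E}[|y_{ij}|^{2m}]$ of order roughly $T^{2m-2-\epsilon}$ (this is what the tail $\mathbb{P}(|\xi|>t)\asymp t^{-(2+\epsilon)}$ gives), which is far above the $n$-independent bound $(\mathrm{const}\cdot m)^m$ required in hypothesis \eqref{gaussianmomentsrealgaussian} of Theorem \ref{convergencespectral} and in Definition \ref{generalmatrixmodel} for Theorem \ref{spectralradiuslargedeviation}. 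A bounded variable of size $T$ is sub-Gaussian only with variance proxy $\sim T^2 = n^{1-2\delta}$, which is not a fixed constant, so neither theorem applies to $B$ as claimed. Lowering $T$ to repair the moment hypothesis makes $R$ dense rather than sparse, destroying the rest of the plan, so there is a genuine tension here rather than a choosable parameter.

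Second, and more seriously, the step $\rho(B+R)\le\rho(B)+o(1)$ from the factorization $A-z=(B-z)(I+(B-z)^{-1}R)$ requires an upper bound on $\|(B-z)^{-1}\|$ for $|z|$ just beyond the desired threshold. Since $B$ is highly non-normal, its pseudospectrum can extend far outside its spectrum, and controlling $\sigma_{\min}(B-zI)$ for such $z$ is exactly the smallest-singular-value / inhomogeneous circular law problem that the introduction explicitly flags as open in this generality. That missing pseudospectral estimate is the hardest step in your outline and is at least as difficult as the theorem you are trying to prove; it is not something you can cite as "inherited from the near mean-field structure". The paper sidesteps this entirely by never decomposing $A$ and instead reading the spectral radius bound directly off conditional trace moments.
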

The proof will essentially be an adaptation of the moment method in \cite{WOS:000435416700013}. In case (1), it is not clear if we can replace $\sigma$ by the long-time control bound in Definition \eqref{longtimeshorttime}. In case (2), we should be able to prove a two-sided inequality, that is, we also have $\rho(A)\geq\sqrt{\rho(s)}-\epsilon$, via showing an inhomogeneous circular law holds for the ESD of A (see \cite{MR3770875}, \cite{MR3857860}, \cite{cook1article}) and that the limiting law is supported on $B(0,\sqrt{\rho(S)})$. The proof of this step is standard by now (only the $2+\epsilon$ moment condition needs to be taken care of) and is omitted.

The uniform upper bound \eqref{howdoesupperbounds} plays a fundamental role and cannot be removed. As entries of $X$ only have $2+\epsilon$ finite moments, if \eqref{howdoesupperbounds} were replaced by a weaker assumption, we can easily construct a band matrix in which the diagonal entries are with high probability unbounded, while the rest of the matrix (with diagonal entries removed) has a bounded spectral radius. This is likely to produce a matrix with unbounded spectral radius; for more details of this band matrix example, see Example \ref{examplebandmatrix}. That is, a weakening of \eqref{howdoesupperbounds} should be followed by a strengthening of the moment assumption on $\xi$, but proving such am interpolation result is beyond the scope of this work.

\subsection{Methods and further discussions}

To prove most theorems in this paper (except Theorem \ref{theorem1.5subgaussianconcentration}) we use the method of moments to bound the spectral radius of $A$, and this idea dates back to the earliest works \cite{Geman1986THESR}, \cite{Bai1986LimitingBO} and the more recent ones
\cite{WOS:000435416700013}, \cite{benaych2020spectral}. The basic idea is very easy to illustrate: if $\lambda$ is an eigenvalue of $A$ then $\lambda^p$ is an eigenvalue of $A^p$ for any $p$, so that we have $|\lambda|\leq \|A^p\|^\frac{1}{p}$ and the problem boils down to computing upper bounds of $\mathbb{E}[\|A^p\|]^\frac{1}{p}$. The accuracy of estimates on $\rho(A)$ depends on the range on which $p$ can simultaneously grow with $n$. Compared to previous works, the key novelty of this paper is to improve the range of $p$ where such a bound can be proven.

The idea of using high moment computations and a compression argument to bound the operator norm of an inhomogeneous random matrix dates back to \cite{erdHos2011quantum}, \cite{bandeira2016sharp}, and we adapt them to this non-Hermitian setting. In contrast to the situation there, the high moment computations $\mathbb{E}[\|A^p\|]$ for a Gaussian model $A$ does not seem available, and this will occupy the bulk of Section \ref{section2} of this paper.

For Theorem \ref{convergencespectral}, we need to consider any $p=O(n)$ in computing $\mathbb{E}[\|A^p\|]^\frac{1}{p}$. This is in general very difficult, and we will take ideas from computing non-backtracking paths and precise topological enumeration of all possible paths, which is first done in \cite{feldheim2010universality} for Wigner and Wishart matrices. The independent assumption on $A$ essentially guarantees the paths are non-backtracking, so many computations in \cite{feldheim2010universality} can be used here.

For Theorem \ref{Theorem1.6a12}, we need to consider any $p\ll \sqrt{n}$ in bounding $\mathbb{E}[\|A^p]\|$. We do not consider any larger scale of $p$ here because typically $\rho(A)\geq \sigma+n^{-1/2}$ by \eqref{fluctuationedge}. The upper bound of $\mathbb{E}[\|A\|^p]$ is reminiscent of the Dyck path enumerations in proving edge universality for Wigner matrices in \cite{MR1620151}, \cite{MR1647832}, \cite{soshnikov1999universality}, and we will take inspirations for these works to prove Theorem \ref{Theorem1.6a12}. In this range $p\ll\sqrt{n}$, the main contribution to $\mathbb{E}\|A\|^p$ comes from double edges without self-intersection, and this is why only the long-time control in Definition \ref{longtimeshorttime} contributes to the first order limit of $\rho(A)$.

Theorem \ref{theorem1.5subgaussianconcentration} will follow from using the machinery of \cite{bandeira2023matrix} and a covering argument, leading to an entropy cost $(1+t^{-3})$.
Theorem \ref{upperboundsecondmoment} will follow from adapting the high moment computations from \cite{WOS:000435416700013}.

The high moment method in this paper requires $\mathbb{E}[A]=0$. Recently, \cite{campbell2024spectral} proposed a method to prove absence of outliers for $A$ having nonzero expectation and a general variance profile satisfying \ref{twosidedbounds}.

\section{Moments expansion in the homogeneous case}\label{section2}

To bound the spectral radius we will develop moments expansion of $\mathbb{E}[\operatorname{tr}(A^\ell (A^*)^\ell)]$ for moderately large $\ell$ up to $\ell=o(\sqrt{n})$ and $\ell=o(n)$. Unlike Wigner and Wishart matrices, such estimates cannot easily be found in the literature even in the special case where $A$ is the Ginibre ensemble. In \cite{Geman1986THESR} a computation was done up to $\ell=n^c$ for some fixed small $c>0$ and in  \cite{benaych2020spectral} a computation up to $\ell=n^{\frac{1}{3}-\epsilon}$, but they are not enough for our purpose.

Therefore we have to do moment computation by ourselves even in the Gaussian case. We will present in this section the computations when the variance profile is homogeneous, and then adapt the computations to inhomogeneous variance profiles in Section \ref{section3}. The computations do not use the Gaussian structure and works for general distributions. 

At this point we may wonder if, using Wick formula, we can significantly simplify the computation of $\mathbb{E}[\operatorname{tr}(A^\ell(A^*)^\ell)]$ for the Gaussian case. Relevant computations for the mixed moments of complex Ginibre ensembles can be found in references in \cite{byun2022progress}. In particular, it is well-known that certain mixed moments of the complex Ginibre ensemble converge to the Fuss-Catalan numbers \cite{halmagyi2020mixed} \cite{kemp2011enumeration}. However, these results are computed in the regime when the matrix size $n$ grows to infinity but $\ell$ is fixed. When $\ell$ is growing with $n$, it is not clear if the use of Wick formula leads to any simplification in the combinatorial argument: we have to enumerate all the pairings of vertices of a chain of length $2\ell$ where two different pairings share the same vertex, and it is unclear how to correctly enumerate such pairings especially near the critical case $\ell=n^{1/2-\epsilon}$.

This section is purely technical and can be skipped for a first reading. In Section \ref{section3}, references will be made on the results and notations stated in this section.

We will carry out two different types of moment computations. In Section \ref{momentestimate1}, which is the preparation for proving Theorem \ref{Theorem1.6a12} and \ref{spectralradiuslargedeviation}, we will compute moments up to $\ell=o(n^\frac{1}{2})$ to derive small deviation estimates. The choice of $\ell$ is because the spectral edge of a Ginibre matrix satisfies the three terms expansion \eqref{fluctuationedge}. It turns out that in the regime $\ell=o(n^\frac{1}{2})$ the combinatorics in the trace expansion $\mathbb{E}[\operatorname{tr}(A^\ell (A^*)^\ell)]$ is relatively simple, and the only dominant contribution comes from simple paths with no self-intersection. We will partly follow and make considerable adaptations to the moment computation in \cite{MR1620151}  for Wigner matrices up to $\ell=o(n^\frac{1}{2})$.

In Section \ref{momentestimate2}, which is the preparation for proving Theorem \ref{convergencespectral}, we compute moments up to $\ell=O(n)$. Motivated by \cite{feldheim2010universality}, the combinatorics involved is much more difficult but still tractable by the graph counting techniques.

\subsection{Moment estimate I}\label{momentestimate1}

To highlight the main ideas we first do computation for the complex Ginibre case in Lemma \ref{lemma2.111}, then the sub-exponential symmetric case in Theorem \ref{momenttheorem2.2}, and finally remove the symmetric distribution assumption in Theorem \ref{nosymmetricdistribution}. The underlying theme is the same despite increased generality.

\begin{lemma}\label{lemma2.111}
    Assume that $A$ is the complex Ginibre ensemble, i.e. $V=\frac{1}{n}\mathbf{1}\mathbf{1}^t$ and entries of $X$ are i.i.d. with law $\mathcal{N}_\mathbb{C}(0,1)$. Then whenever $p\ll \sqrt{n}$, we have 
    $$
\mathbb{E}\operatorname{Tr}[A^p(A^*)^p]=n(1+o(1)).
    $$
\end{lemma}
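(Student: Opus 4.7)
The plan is to attack the lemma directly by the trace method. I write $\operatorname{Tr}[A^p(A^*)^p]=\|A^p\|_F^2 = \sum_{i,j=1}^n |[A^p]_{ij}|^2$ and expand $[A^p]_{ij}=\sum_{i=v_0,v_1,\dots,v_p=j}\prod_{k=1}^p A_{v_{k-1}v_k}$, producing two independent length-$p$ walks $\mathbf v,\mathbf u$ joined at the endpoints $v_0=u_0$, $v_p=u_p$. Taking expectations and using the complex Gaussian covariance $\mathbb{E}[A_{ij}A_{kl}]=0$, $\mathbb{E}[A_{ij}\overline{A_{kl}}]=n^{-1}\delta_{ik}\delta_{jl}$, the Wick/Isserlis formula presents the answer as
\[
\mathbb{E}\operatorname{Tr}[A^p(A^*)^p]=n^{-p}\sum_{\pi\in S_p}N(\pi),
\]
where the permutation $\pi$ pairs each $A$-factor with an $\overline A$-factor and $N(\pi)$ counts vertex configurations $(\mathbf v,\mathbf u)$ compatible with the directed-edge constraints $v_{k-1}=u_{\pi(k)-1}$ and $v_k=u_{\pi(k)}$ for every $k$.

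The crucial observation is that the identity pairing $\pi=\mathrm{id}$ forces $u_k=v_k$ for all $k$, so $N(\mathrm{id})$ just counts length-$p$ walks $\mathbf v$; modulo self-intersecting configurations (which contribute a lower-order $O(p^2/n)$ correction), this equals $n^{p+1}$ and yields exactly the claimed main term $n(1+o(1))$ in the regime $p=o(\sqrt n)$. The hard part is to show that the remaining permutations $\pi\ne\mathrm{id}$ contribute $o(n)$ in aggregate. Setting $V(\pi):=\#\{\text{distinct indices in the merged configuration}\}$, one has $N(\pi)\le n^{V(\pi)}$, and the task reduces to proving
\[
\sum_{\pi\ne\mathrm{id}} n^{V(\pi)-p}=o(1).
\]

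I would estimate this sum by a defect expansion in the parameter $d(\pi):=p+1-V(\pi)\ge 1$. For each $d\ge 1$, the number of permutations realizing defect $d$ should be bounded by a Catalan-like quantity of order $\binom{p-1}{d}C^d$, so that the total is dominated by a geometric series $\sum_{d\ge 1}(Cp^2/n)^d$, which vanishes whenever $p\ll\sqrt n$. The cleanest way to set up this enumeration is to associate with each Wick pairing together with the two walks a ribbon graph on a compact oriented surface and read off $V(\pi)-p$ from its Euler characteristic; the defect $d(\pi)$ then equals twice the genus of that surface plus a contribution counting higher self-intersections of the walks. The main obstacle is precisely this combinatorial and topological bookkeeping: the non-Hermitian matching constraint (pairs must preserve edge orientation) is more rigid than in the Hermitian Wigner setting and rules out every non-trivial planar pairing, which is the structural reason behind the trivial limit $1$ rather than a Catalan number. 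I plan to follow the Sinai--Soshnikov tree-encoding strategy used in \cite{MR1620151} to turn the genus expansion into quantitative bounds, and this is where the bulk of the work lies.
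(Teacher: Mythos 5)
Your Wick-formula/genus-expansion plan is a genuinely different route from the paper's. The paper never reduces to a sum over permutations; instead it classifies configurations by the visit profile $(n_1,n_2,\dots)$ of the first half of the walk and reconstructs the second half from the first, bounding the number of reconstructions by $\prod_{k\ge 2}k^{kn_k}$ and then summing the profile-by-profile contributions. That classification is chosen precisely because the paper is skeptical of the Wick route: in the discussion opening Section~\ref{section2} the author explicitly remarks that when $\ell$ grows with $n$, \emph{``it is not clear if the use of Wick formula leads to any simplification in the combinatorial argument: we have to enumerate all the pairings of vertices of a chain of length $2\ell$ where two different pairings share the same vertex, and it is unclear how to correctly enumerate such pairings especially near the critical case $\ell=n^{1/2-\epsilon}$.''} That warning is aimed squarely at the step you yourself flag as ``where the bulk of the work lies.''

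That step is also the one where your sketch currently has a real gap, and not just an omission of detail. You propose that the number of $\pi\in S_p$ with defect $d(\pi)=p+1-V(\pi)=d$ is of order $\binom{p-1}{d}C^d$, i.e.\ $\asymp (Cp)^d/d!$. With the per-defect cost $n^{-d}$ that count would give $\sum_{d\ge 1}(Cp/n)^d=O(p/n)$, which vanishes for all $p\ll n$ — a much stronger statement than the Lemma allows, and one that contradicts the fluctuation scale \eqref{fluctuationedge}, which shows the moment method must break at $p\sim\sqrt n$. Either the count is really $\asymp p^{2d}$ (so each defect unit ``uses up'' two free choices in the walk, not one) or the per-defect cost is not what you think; either way the arithmetic of your claimed defect bound and your claimed conclusion $\sum_d (Cp^2/n)^d$ do not match, and this is exactly the spot where the enumeration must be done carefully. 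A second, minor point: $N(\mathrm{id})$ equals $n^{p+1}$ exactly (every walk with $u_k=v_k$ is admissible, self-intersecting or not), so there is no separate $O(p^2/n)$ correction coming from $\pi=\mathrm{id}$; all of the error term is carried by $\pi\ne\mathrm{id}$, which only heightens the importance of getting the defect count right.

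Finally, it is worth noting what each approach buys. The paper's visit-profile classification is more elementary and, crucially, manifestly does not use Gaussianity: it bounds the expectation of each monomial by a moment inequality and so extends directly to sub-Gaussian and sub-exponential entries (Theorems~\ref{momenttheorem2.2} and \ref{nosymmetricdistribution}), which is what the rest of the paper needs. Your Wick/ribbon-graph approach is intrinsically Gaussian and would only handle this one lemma; if you want to pursue it, the right comparison point is not Sinai--Soshnikov (which is built around visit profiles much like the paper) but the Harer--Zagier-style topological recursion literature for mixed moments of Ginibre, where a genuinely unoriented/orientation-constrained genus expansion would have to be set up, and where the defect exponent should come out as two free choices per handle, i.e.\ $p^{2d}$, recovering the threshold $p\ll\sqrt n$.
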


\begin{proof}
    The property we use about the distribution $\xi\sim \mathcal{N}_\mathbb{C}(0,1)$ is its rotational invariance, so that for any $m,n\in\mathbb{N}_+$, $\mathbb{E}[\xi^m\overline{\xi}^n]=0$ unless $m=n$.

\textbf{Breakdown the expectation computation: classifying vertices by the number of visits by the first half.}
We have the expansion 
$$
\mathbb{E}\operatorname{tr}(A^p(A^*)^p)=\frac{1}{n^p}\sum_{i_0,\cdots,i_{p}=1}^n \sum_{j_0,\cdots,j_{p-2}=1}^n \mathbb{E}[x_{i_0i_1}x_{i_1i_2}\cdots x_{i_{p-1}i_{p}}\bar{x}_{j_{p-2}i_p}\bar{x}_{j_{p-3}j_{p-2}}\cdots\bar{x}_{i_0j_0}]. 
$$  From the above property of complex Gaussian distribution, we see that once $(i_0,\cdots,i_p)$ are fixed, then $(j_0,\cdots,j_{p-2})$ must be chosen from $(i_0,\cdots,i_p)$ and no other possibility is allowed. This is because if some additional index is chosen then we have a term $\bar{x}_{i_sj_s}^{m_s}$ in the expectation with no corresponding pairs of $x_{i_sj_s}$, so the expectation must vanish. Likewise, each directed edge $(i_0,i_1),\cdots,(i_{p-1},{i_p})$
counting multiplicity must be traveled by the $\bar{x}$ elements in exactly the same number of times as they are traveled by the $x$ elements.

Therefore we only need to enumerate over the first half of the paths, and we later show how the contributions from the second half of paths can be bounded from the first half. We first fix the initial $i_0\in[n]$ and then consider paths $P=(i_1,\cdots,i_p)$  and we let $n_k$ be the number of indices in $[n]$ that are visited $k$ times by $P$. Clearly we have $\sum_{k\geq 0}n_k=n$ and $\sum_{k\geq 0}kn_k=p$ (note that $P$ has $p$ indices, and each vertex visited $k$ times by $P$ should appear in $i_1,\cdots,i_p$ with multiplicity $k$, hence the equality). We let $\mathcal{P}_k\subset[n]$ denote the set of vertices visited $k$ times by $P$, for each $k$.

\textbf{Contribution from the dominant part.}
We claim that the dominant contribution comes from paths where $n_1=p$ and $n_k=0$ for all $k\geq 2$. Once such a path is chosen, so that $P$ has no self-intersection, there is a unique choice of $j_{p-2},\cdots,j_{1}$ so as to have a nonzero expectation: we must take $j_{p-2}=i_{p-1},\cdots,j_0=i_1$. The contribution of all such terms to $\mathbb{E}\operatorname{tr}(A^p(A^*)^p)$ is at most
\begin{equation}\label{dominantsums}
\frac{1}{n^p}\cdot n\cdot (n-1)\cdots(n-p)= n(1+o(1)),
\end{equation} where we simply need to choose $p$ different indices from $[n]$, in addition to the initial vertex $i_0$, and we also use $p\ll\sqrt{n}$.

\textbf{Other contributions: Determining the second half of path from the first half}. Then we show contributions from terms with nonzero $n_k,k\geq 2$ is negligible. The procedure is to first choose the initial $i_0$ and then choose the path $P$. Once $P$ is chosen proceed as follows: we set up a procedure which, for every $1\leq s\leq p-2$ suppose that $j_{s}$ is chosen, outputs a choice of $j_{s-1}$. We first show how we determine $j_{p-2}$ from $i_p$: if $i_p$ is visited only once in $P$ by $x_{i_{p-1}i_p}$ then we must take $j_{p-2}=i_{p-1}$ with no other option, and we have a $|x_{i_{p-1}i_p}|^2$ term in the expectation. If $i_p$ is visited more than once in $P$, say it is visited $k$ times, then we choose any of these $k$ visits and set $j_{p-2}$ to be the vertex in $P$ immediately to the left of this visit (the restriction of complex Ginibre ensembles forces that $j_{p-2}$ must be exactly one of those $k$ preimages to guarantee a nonzero expectation). In this scenario $|x_{i_{p-1}j_{p-2}}|$ may appear at most $2k$ times in the expansion of $\mathbb{E}[A^p(A^*)^p]$.
This determines the choice of $j_{p-2}$, and from this we can determine the choice of $j_{p-3}$ depending on the number of times $j_{p-2}$ appears in $P$, and then determine $j_{p-4}$, all the way up until we determine $j_0$. We are faced with the global constraint that each directed edge in $P$ must be traveled the same number of times by the elements with complex conjugate, the $\bar{x}_{ij}$'s.

In the above procedure, the determination of $j_{p-s-1}$ from $j_{p-s}$ is not unique, but if $j_{p-s}$ is visited by $P$ for $k$ times, then there is at most $k$ possible values for $j_{p-s-1}$. Thus for a fixed first half of path $P$, the total number of choices of the second half is at most $\prod_{k=2}^p k^{kn_k}$. We prove this upper bound as follows: 

\textbf{Proving the upper bound for the number of choices for the second half of the path.}
To justify the upper bound in the last paragraph, we can carry out an inductive proof: let $P'$ denote the second half of the path, and suppose we have known the first $x\leq p$ vertices of $P'$, so we know $P\cup (P')_{[1,x]}$ where the latter is the restriction of $P'$ on the first $x$ vertices. By the constraint of complex Gaussians, we have determined the exact vertices that $P'_{[x+1,p]}$ should visit, and the exact number of times each vertex should be visited (but have not determined the order where they are visited).

Now we denote, for each $k$, $n_k'$ (and  $n_k''$) be the number of indices of $(P')_{[1,x]}$ (and respectively  $(P')_{[x+1,p]}$) that lie in the subset $\mathcal{P}_k$, counting the multiplicity of each appearance. Note that by the previous paragraph, $n_k,n_k''$ are uniquely determined by $(P')_{[1,x]}$ and not by the choice of $(P')_{[x+1,p]}$. Then we have $kn_k=n_k'+n_k''$: be careful that in the definition of $n_k$ we only count indices in $[n]$ without the number of visits, but in defining $n_k'$ and $n_k''$ we count each single visit, and thus there is an overall multiplicity $k$ for each index in $\mathcal{P}_k$. For the known path $(P')_{[1,x]}$ we claim the number of possible choices for $(P')_{[x+1,p]}$ is at most $\prod_{k=2}^pk^{n_k''}$. This claim is clearly true for $x=p-2$, and when the claim is true for $x$, it is not hard to prove it is true for $x-1$ by determining the possible next vertex once $P'_{x-1}$ is chosen, see the next paragraph:

Indeed, if we use $n_k'(x)$ and $n_k''(x)$ to denote the numbers $n_k',n_k''$ with respect to $P'_{[1,x]}$, then assume that we are given $(P')_{[1,x-1]}$, and the last vertex $P'_{x-1}\in\mathcal{P}_{k_0}$ for some $k_0$, then there are at most $k_0$ choices for the vertex $P'_{x}$, and, for each \textbf{admissible} choice of $P_x'$ (to be explained later), we must have $n_l''(x)=n_l''(x-1)$ for all $l\neq k_0$ and $n_{k_0}''(x)=n_{k_0}''(x-1)-1$. Then by indiction hypothesis, the contribution of $(P')_{[x+1,p]}$ for each fixed admissible $P_x'$ is at most $\prod_{k=2}^p k^{n_k''(x-1)-\mathbf{1}_{k=k_0}}$. Multiplying by $k_0$ verifies the indiction hypothesis for $x-1$. Then we inductively prove this upper bound for the whole path $P'$, since when $x=0$ we have $n_k''=kn_k$.

Finally, we explain what \textbf{admissible} means in the previous paragraph. Although for a given $P'_{x-1}$ we may have at most $k_0$ possible choices of $P_{x}'$, not all of them will make a non-vanishing contribution to the contribution: we have to make sure the constraint on complex Ginibre matrices is satisfied, so that the edges of $P'$ should be an exact repetition of the edges of $P$. As such, we only consider those $P_x'$ such that the remaining paths can be constructed to satisfy this condition, and thus we can check that $n_l''(x)=n_l(x)-1_{l=k_0}$ for each such choice of $P_x'$.

\textbf{Combining all the upper bounds.} For each choice of $n_1,n_2,n_3,\cdots n_p$ the total contribution to $\mathbb{E}[\operatorname{tr}(A^p(A^*)^p]$ by subpaths with free index $i_0$ and first half part $P$ has index $n_0,\cdots,n_p$ is bounded by
\begin{equation}\label{contributiongeneral}
n\frac{1}{n^p}\frac{n!}{n_0!n_1!\cdots n_p!}\frac{p!}{\prod_{k=2}^p(k!)^{n_k}}\prod_{k=2}^pk^{kn_k}\prod_{k=2}^p(\text{const}\cdot k)^{kn_k}.
\end{equation} The construction here is highly reminiscent of \cite{MR1620151} in the Wigner case.
The leading factor $n$ is the choice of $i_0\in[n]$.
The factor $\frac{n!}{(n_0!\cdots n_p!)}$ arises from dividing $n$ integers into subsets of sizes $n_0,\cdots,n_p$. The factor $\frac{p!}{\prod_{k=2}^p(k!)^{n_k}}$ is the number of ways to write down all such paths $P$ of length $p$ with fixed $n_0,\cdots,n_p$ and fixed vertex groups that appear in $P$: indeed we have $p!/\prod_{k=1}^p(kn_k)!$ ways to divide the path length $p$ into subsets of size $kn_k$, $k=1,\cdots,p$, and we have $(kn_k)!/(k!)^{n_k}$ ways to write down entries in each class $P_k$. The factor $k^{kn_k}$ is explained in the above paragraph. Finally, the last exponential factor $(\text{const}\cdot k)^{kn_k}$ bounds the subGaussian moments from paths visited multiple times, by assumption \eqref{gaussianmomentsrealgaussian}. Indeed the estimate \eqref{contributiongeneral} is very loose and involves a lot of overcounting, but it is good enough in the sub-Gaussian case. We will refine them in Theorem \ref{momenttheorem2.2} for the sub-exponential case.

\textbf{Upper bound estimate for the summation.}
Using $\frac{n!}{n^pn_0!}\leq \frac{n(n-1)\cdots(n-n_0+1)}{n^p}\leq n^{n-n_0-p}=n^{-\sum_{k=2}^p(k-1)n_k}$, using also $p!\leq (n_1)!p^{p-n_1}$ and $p-n_1=\sum_{k\geq 2}kn_k$, we can bound the right hand side of \eqref{contributiongeneral} by 

$$\begin{aligned}
&n\cdot n^{-\sum_{k\geq 2}(k-1)n_k} \frac{1}{n_2!\cdots n_p!}\frac{p^{\sum_{k\geq 2}kn_k}}{\prod_{k=2}^s(ke^{-1})^{kn_k}}\prod_{k=2}^p(2\text{const}\cdot k^2)^{kn_k}
\\&\quad \leq n\prod_{k=2}^p\frac{1}{n_k!} \left[\frac{(2e\cdot\text{const}\cdot k\cdot p)^k}{n^{k-1}}\right]^{n_k}
.\end{aligned}$$
Taking the summation over all $0<\sum_{k\geq 2}kn_k\leq p$, we see that the total sum is bounded by 
$$
n\left[\exp\left(\sum_{k=2}^p\frac{(2e\cdot\text{const}\cdot k\cdot p)^k}{n^{k-1}}\right)-1\right].
$$
    For any $p\ll n^\frac{1}{2}$ we have $$\begin{aligned}\sum_{k=2}^p\frac{(kp)^k}{n^{k-1}}&=\sum_{k=0}^{p+2}\frac{((k+2)p)^{k+2}}{n^{k+1}}\leq  \frac{p^2}{n}(4+\sum_{k= 1}^{p+2}\frac{(k+2)^2((k+2)p)^{k}}{n^k})\\& \leq \frac{p^2}{n}(4+\sum_{k= 1}^{p+2}(\frac{\text{const}\cdot p^2}{n})^k)=\frac{p^2}{n}\cdot(3+\frac{1}{1-\text{const}\cdot p^2/n})=o(1),\end{aligned},$$ where for the first inequality in the second line we use $k\leq p+2$ and $\log_k k+2\leq 2$. Thus we verified the contribution from summations with $\sum_{k\geq 2}kn_k\geq 1$ is negligible.

    The last computation highlights why $p\sim \sqrt{n}$ is the threshold value for the proof.
\end{proof}

Exactly the same limit holds in the sub-exponential case with a symmetric law.

\begin{theorem}\label{momenttheorem2.2}
    Let $X$ satisfy the assumptions in Definition \ref{generalmatrixmodel}, and we assume further that the distribution of $x_{ij}$ is symmetric. We assume $V=\frac{1}{n}\mathbf{1}\mathbf{1}^t$. Then whenever $p\ll \sqrt{n}$, 
    $$
\mathbb{E}\left[\operatorname{Tr}(A^p(A^*)^p)\right]=n(1+o(1)).
    $$ We can also find constants $C_0,C_1>0$ depending only on the constant in the sub-exponential moments $x_{ij}$ \eqref{gaussianmoments} such that whenever $p^2\leq C_0n$, we have $$\mathbb{E}\left[\operatorname{Tr}(A^p(A^*)^p)\right]\leq C_1n.
    $$
\end{theorem}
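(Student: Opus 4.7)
The plan is to follow the template of Lemma \ref{lemma2.111}, accommodating two weakenings: the absence of rotational invariance (replaced by the symmetric-distribution hypothesis) and the replacement of sub-Gaussian tails by sub-exponential tails. First, expand $\mathbb{E}[\operatorname{Tr}(A^p (A^*)^p)]$ as a sum over double-walks $(i_0,\ldots,i_p,j_{p-2},\ldots,j_0)$ and note that the symmetric-distribution hypothesis implies the expectation vanishes unless each independent entry $x_{ab}$ of $X$ appears an even total number of times in the product, combining its occurrences as $x$ and as $\bar x$. This is strictly weaker than the Ginibre matching constraint but still sharply restricts the admissible walks.

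The dominant contribution is extracted exactly as in Lemma \ref{lemma2.111}: if the first half $P=(i_0,\ldots,i_p)$ visits $p+1$ distinct vertices, it uses $p$ distinct directed edges each once, and the even-count constraint together with the length-$p$ budget forces the second half to use each of these edges exactly once, which in turn forces $j_{p-s}=i_{p-s}$. Using $\mathbb{E}[|x_{ij}|^2]=1$, these simple-path configurations contribute $n(n-1)(n-2)\cdots(n-p)/n^p = n(1+o(1))$ for $p\ll \sqrt n$, matching the Ginibre leading order.

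For the remaining contributions, classify first-half paths by the profile $(n_0,n_1,\ldots,n_p)$ with $n_k$ the number of vertices visited exactly $k$ times in $P$, and reproduce the labeling and ordering combinatorics of Lemma \ref{lemma2.111}: multinomial $n!/(n_0!\cdots n_p!)$ for vertex labels and $p!/\prod_k(k!)^{n_k}$ for ordering within $P$. Adapt the inductive left-neighbor argument to the weaker symmetric constraint to bound the number of admissible second halves by $\prod_k k^{kn_k}$, up to polynomial-in-$k$ factors absorbed into a multiplicative constant. The sub-exponential moment bound $\mathbb{E}[|x_{ab}|^{2k}]\leq (ck)^{2k}$ (versus $(ck)^k$ in the sub-Gaussian setting) contributes an additional factor $k^{kn_k}$, so the per-profile bound becomes schematically
$$n\prod_{k\geq 2}\frac{1}{n_k!}\left[\frac{(ck^2 p)^k}{n^{k-1}}\right]^{n_k},$$
and summing over profiles gives $n\bigl[\exp\bigl(\sum_{k\geq 2}(ck^2 p)^k/n^{k-1}\bigr)-1\bigr]$. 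For $p\ll\sqrt n$ the $k=2$ term dominates and is $O(p^2/n)=o(1)$, with higher-$k$ terms forming a summable tail, yielding the first statement; for $p^2\leq C_0 n$ with $C_0$ small enough depending only on the sub-exponential constant $c$, the same exponent is bounded by an absolute constant, giving the $C_1 n$ bound of the second statement.

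The main obstacle is bounding the number of admissible second halves under the weaker symmetric-distribution constraint. In the Ginibre case, rotational invariance immediately restricts $j_{p-s-1}$ to at most $k$ left-neighbors of $j_{p-s}$ in $P$; in the symmetric case, one must argue more carefully, tracking the parity of edge counts as the second half is built right-to-left and handling the possibility of excursions using edges not in the first half (which have to cancel in pairs later). The required $\prod_k k^{kn_k}$ bound still holds, up to constants absorbed into $c$, because such excursions further constrain the remaining edge-budget and hence produce fewer contributing configurations, but verifying this quantitatively is where the combinatorics gets delicate. A secondary subtlety is that the extra $k^{kn_k}$ factor from sub-exponential tails tightens the admissible range of $p$, forcing $C_0$ in the second statement to be small enough that the exponential sum over profiles remains bounded even for the largest relevant values of $k$, which requires separately handling $k$ beyond a polylogarithmic threshold using the constraint $k n_k\leq p$.
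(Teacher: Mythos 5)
Your overall template (simple paths give the leading term, classify deficit configurations by a profile, compare with Lemma \ref{lemma2.111}) is the right instinct, but two of your steps break in ways that the paper's own proof is specifically engineered to avoid.

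\textbf{Gap 1: classifying by the first half alone loses track of new vertices in the second half.} In the Ginibre case the rotational invariance forces the second-half vertices $(j_0,\ldots,j_{p-2})$ to be drawn from $\{i_0,\ldots,i_p\}$, so enumerating first-half profiles suffices. Under the symmetric-distribution hypothesis this is false: the second half may introduce fresh vertices, as long as the corresponding new directed edges are doubled entirely within the second half. Concretely, with $p=4$ take the first half $i_0\to i_1\to i_0\to i_1\to i_0$; for \emph{every} $u\in[n]$ the second half $i_0\to u\to i_0\to u\to i_0$ gives a nonzero expectation, so there are $\Theta(n)$ admissible second halves, not $O(1)$. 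Your multinomial factor $n!/(n_0!\cdots n_p!)$ only accounts for vertex labels appearing in $P$, so these $n$ choices of $u$ are nowhere in your bookkeeping, and the claimed bound ``number of admissible second halves $\leq \prod_k k^{kn_k}$ up to polynomial factors'' is simply wrong. The total contribution of such configurations does end up negligible (there is an accompanying $n^{-1}$ from the normalization), but that cancellation is invisible in your framework. The paper fixes this by classifying the \emph{odd indices of the entire walk} (the instances where an edge is visited for the $1$st, $3$rd, $5$th, \ldots time), and the sets $\mathcal{P}_k$ in Step 1 of the paper's proof refer to those $p$ odd positions across the whole path, not to visits of the first half. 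This places all ``fresh'' vertices, wherever they occur, into the $n!/\prod n_k!$ multinomial and tracks the even positions as re-traversals that carry no vertex entropy. You acknowledge the excursion problem in your risk paragraph, but the assertion that excursions ``produce fewer contributing configurations'' does not close it.

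\textbf{Gap 2: your handling of sub-exponential tails does not reach $p^2\leq C_0 n$.} You propose to absorb the sub-exponential moment $\mathbb{E}[|x|^{2k}]\leq(ck)^{2k}$ by taking an extra $k^{kn_k}$ factor, arriving at the per-profile exponent $\sum_{k\geq 2}(ck^2p)^k/n^{k-1}$. But under the constraint $kn_k\leq p$ the profile with $k=p$, $n_p=1$ still appears, and for $p\sim C_0\sqrt n$ the term $(c p^3)^p/n^{p-1}\sim n\,(cC_0^3\sqrt n)^{C_0\sqrt n}$ diverges for every $C_0>0$. There is no choice of $C_0$ that saves the naive $(ck^2p)^k$ base near the endpoint; ``separately handling large $k$'' with only the constraint $kn_k\leq p$ cannot help either, because it is precisely the large-$k$, $n_k=1$ term that explodes. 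The paper gets around this by the Sinai--Soshnikov observation (referenced to \cite{MR1647832}, Lemma 1, and spelled out in the paragraph ``Sub-exponential entries in place of sub-Gaussian entries''): when an edge is traversed $2\ell$ times, the ``number of ways to return'' factor $k^{k n_k}$ has overcounted by $\ell!$ because the $\ell$ identical returns are indistinguishable; dividing by $\ell!$ converts $(c\ell)^{2\ell}$ back down to $(c'\ell)^\ell$. That keeps the base $(ckp)^k$, not $(ck^2p)^k$, and that single change is what makes the sum converge geometrically for all $p\leq C_0\sqrt n$ with $C_0$ small. Without this cancellation your second claim $\mathbb{E}[\operatorname{Tr}(A^p(A^*)^p)]\leq C_1n$ for $p^2\leq C_0 n$ does not follow from your bound.

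In short: same leading-order idea and same benchmark to beat, but the profile should be attached to the odd visits of the full $2p$-walk (not to the first half), and the sub-exponential moments must be tamed by dividing out the $\ell!$ overcount rather than by carrying the inflated $k^{2}$ base through the summation.
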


\begin{proof}
    We follow most parts of the proof of Lemma \ref{lemma2.111}, but now $x_{ij}$ are not rotationally symmetric. We do still assume $x_{ij}$ has a symmetric distribution, so that 
\begin{equation}\label{evenproducts}  \mathbb{E}[x_{ij}^m\overline{x}_{ij}^n]\neq 0
 \text{  only if  } m+n  \text{ is even}.\end{equation}
 Consider a typical term (we use a different parametrization from Lemma \ref{lemma2.111}) \begin{equation}\label{typicalproducts}x_{i_0i_1}x_{i_1i_2}\cdots x_{i_{p-1}i_p}\bar{x}_{i_{p+1}i_p}\cdots\bar{x}_{i_0i_{2p-1}}\end{equation} in the expansion $\operatorname{Tr}(A^p(A^*)^p)$ with free indices $i_0,\cdots,i_{2p-1}$ to be summed over $[n]^{2p}$. 

In this proof, we call the first half of the path associated to the product in \eqref{typicalproducts} to be the subset with indices $i_0,i_1,\cdots,i_p$ and we call the second half of the path to be the subset with indices $i_{p+1},\cdots,i_{2p-1},i_0$.

 \textbf{Classifying each visit by the parity of visit times.} We make labeling construction similar to the first step of the proof of Lemma \ref{lemma2.111} (but instead of labeling only the first half of the visit, we label the visit of the whole path with a counting mod 2). 
 We say that the index $i_t$, $1\leq t\leq p$ is an odd index if there are an even number of terms in \eqref{typicalproducts} to the left of $x_{i_{t-1}i_t}$ that equals $x_{i_{t-1}i_t}$. We say an index $i_t$, $p+1\leq t\leq 2p-1$ is an odd index if there are an even number of terms to the left of $\bar{x}_{i_{t}i_{t-1}}$ that equals $x_{i_{t}i_{t-1}}$ or $\bar{x}_{i_{t}i_{t-1}}$. Here we use $x$ and $\bar{x}$ as abstract variables rather than the value they may take.
 This definition is stated to count the number of times a directed edge $i_{t-1}\mapsto i_t$ appears in the generated path \eqref{typicalproducts} before and until instance $i_t$.

 Likewise, we say $i_t$, $1\leq t\leq p$ (resp. $p+1\leq t\leq 2p-1$) is an even index if an odd number of terms to the left of $x_{i_{t-1}i_t}$ (resp. $\bar{x}_{i_{t}i_{t-1}}$)  equals $x_{i_{t-1}i_t}$ (resp. equals $x_{i_{t}i_{t-1}}$ or $\bar{x}_{i_{t}i_{t-1}}$).  Evidently, if $i_1,\cdots,i_t$ are mutually distinct up to time $t$, then $i_t$ is an odd element. 

 Finally, we say the last instance $i_0$ in the path expansion \eqref{typicalproducts} is odd if to the left of the term $\bar{x}_{i_0i_{2p-1}}$ there are an even number of terms that equal
 $\bar{x}_{i_0i_{2p-1}}$ or $x_{i_0i_{2p-1}}$, and we say $i_0$ is even otherwise. 
 
 This definition of odd and even index is very similar to that in \cite{MR1620151}, Definition 1 and 2 but with the crucial difference that we consider directed edges (as the entries are i.i.d.) whereas \cite{MR1620151} considers undirected edge: this leads to an overall difference of a factor of 2.

 For this term \eqref{typicalproducts} to have nonzero expectation, each term $x_{i_si_{s+1}}$ must be coupled with $x_{i_si_{s+1}}$ or its complex conjugate, and the total number that $x_{i_si_{s+1}}$ and its conjugate appears must be even, due to \eqref{evenproducts}. Thus the directed path generated by \eqref{typicalproducts} must be an even closed path, and the last appearance of $i_0$ in \eqref{typicalproducts}  must be an even index. Altogether, there are $p$ odd indices and $p$ even indices among $i_0,\cdots,i_{2p-1}$ (we do not consider the first appearance of $i_0$ in \eqref{typicalproducts}). We can reconstruct an even closed path as follows.

\textbf{Step 1: Assignment of vertices of the path with the correct multiplicity (parts of the path with odd visit times)}. We choose $p$ indices from $[n]$(with repetition) such that $n_1$ of them appears once (we denote this set by $\mathcal{P}_1$), $n_2$ of them appears twice (denoted by $\mathcal{P}_2$), $n_k$ of them appears $k$ times (denoted by $\mathcal{P}_k$), etc. Then $\sum_{i=0}^p n_i=n$ and $\sum_{i=1}^p in_i=p$. Then we give an ordering $\mathcal{L}$ of $\cup_{i\geq 1}i\mathcal{P}_i$ to decide the order they appear in $[p]$, where we use the notation $i\mathcal{P}_i$ to denote the multiset in which every element of $\mathcal{P}_i$ appears $i$ times. These $p$ vertices are candidates for the odd instances in the path \eqref{typicalproducts} , with overall combinatorial factor
$$n\frac{n!}{n_0!n_1!\cdots n_p!}\frac{p!}{\prod_{k=2}^p(k!)^{n_k}}.$$

\textbf{Step 2: inserting even paths to the already constructed odd paths I: the first half}. In the previous step, we have essentially constructed half of the path, having the edges that are visited an odd number of times. The next step is to insert new edges in the path that correspond to visits a number of times. We introduce an algorithm to insert these edges of the visit at even number of times. Before proceeding, we highlight how this proof mirrors the proof of Lemma \ref{lemma2.111}: here we introduce subsets $\mathcal{P}_1,\mathcal{P}_2,\cdots$: These subsets play a similar role as the subsets introduced in the proof of Lemma \ref{lemma2.111} with the same notation. The remaining steps are to insert the edges that are visited at the even number of times to the already constructed part, and to show that this leads to an overall combinatorial factor of at most $\prod_{k\geq 2} (\text{const}\cdot k)^{kn_k}$. We have to design a much more involved algorithm to insert in the edges visited at the even number instances and verify this combinatorial upper bound: this algorithm is much more sophisticated than the one in Lemma \ref{lemma2.111} and is presented as follows.

By Step 1, we first select $i_1,\cdots,i_p$ to be the vertices selected in Step 1 in the order specified in Step 1, and we assign that all the index $i_1,\cdots,i_p$ are odd index. Then we start a scanning and insertion procedure as follows. We start to scan from the second index $i_1$ which must be odd and we do nothing with it. (a) Suppose we have scanned the path up to index $i_t$ $(1\leq t<p)$ and that $i_t\in\mathcal{P}_1$ is an odd index, then $i_{t+1}$ must also be an odd index as $x_{i_ti_{t+1}}$ hasn't appeared before to its left. (b) On the other hand, if $i_t\in\mathcal{P}_k$ is an odd index for some $k\geq 2$, then in the finally constructed path \eqref{typicalproducts} there is a freedom for $i_{t+1}$ to be (b1) an odd index (and we take no action) or (b2) be an even vertex (we will insert new elements between $i_t$ and the present $i_{t+1}$ so that after insertion, $x_{i_ti_{t+1}}$ repeats an element appearing before it). In other words, we have pre-requested that $i_{t+1}$ is an odd index in the initial construction, but it can now freely change its odd/even nature via a further insertion process.

In the latter case (b2), we insert vertices between $i_t$ and $i_{t+1}$. There are no more than $2k$ choices of previously existing terms to repeat, hence there are at most $2k$ possible choices for the next even index $i_{t+1}$.  Once a new vertex from these $2k$ choices is selected and inserted with label
$i_{t+1}$ in the path (after insertion we relabel the subscript of previous indices $i_{t+k}$ by $i_{t+k+1}$ for all $k\geq 1$), (I) if $i_{t+1}\in\mathcal{P}_1$ then the possible choice of $i_{t+2}$ is unique: it must be the same index as the index that appears after the first appearance of $i_{t+1}$ (because this edge has to be traversed at least twice; if $i_{t+2}$ is chosen differently then we have to return to this edge later and thus arrive at $i_{t+1}$ for the third time, contradicting $i_{t+1}\in\mathcal{P}_1$). Thus we have a unique choice of $i_{t+2},\cdots$ until we reach an index in $\cup_{i\geq 2}\mathcal{P}_i$ and the last step is via an edge used an even number of times. Then we move on to the next un-scanned index in the path, which has an odd index, and we restart the whole procedure of Step 2 to scan this new vertex (if this point is in $\mathcal{P}_1$ then do nothing, otherwise decide whether this index is odd or even and proceed accordingly). (II) If $i_{t+1}\in\cup_{k\geq 2}\mathcal{P}_k$, then move on to the unread index in the path (this is an odd index, and recall that we have fixed $p$ vertices and its order, already in Step I) and restart the procedure in Step 2 afresh and scan $i_{t+1}$. The construction proceeds until we have scanned through the whole path which now has length at least $p+1$, and we have possibly inserted some new vertices to it.

A graphical understanding of Step 2 would be more helpful: case (a) correspond to the case where $i_t$ is not a point of self-intersection, and case (b) corresponds to a possible instance of self-intersection. In case (b1) we simply continue without repeating previously used paths, while in case (b2) we switch to previously used paths and form repeated edges. In case (b2), once we decide to insert repeated edges, there is a unique choice of continuation until we reach a point in $\cup_{k\geq 2}\mathcal{P}_k$, where we stop the insertion and move on to the next un-scanned index. After the whole procedure, we may have inserted some new vertices in the originally length $p+1$ path, and we have already scanned through the path up to time $p$ (we may have scanned the path to a time larger than $p$).

The geometric meaning of Step 2 is to construct all the repeated cycles that both lie in the first half, or the second half, part of the path. This will be useful in Step 3. As a consequence of this observation, in Step 2 we have fixed all the odd indices, and what remains in Step 3 is to fill in the even indices. This task will be settled in Step 3.

\textbf{Step 3: inserting even paths to the already constructed odd paths II: the second half.} If $i_p\in\mathcal{P}_1$ then we assign $i_{p+1}=i_{p-1}$; otherwise we apply Step 2 case (b2) to construct $i_{p+1}$. Now we consider the remaining times $t\geq p+1$. In Step 3 we initiate a new scanning procedure from $i_{p+1}$ onward. Suppose $i_t$ is the un-scanned index with the smallest $t\geq p+1$. If (a) $i_t\in\mathcal{P}_1$, then there are two cases either (a1) the index $i_t$ has been visited before the instant $t$ \footnote{For $t\leq p$, any occurrence of $i_t\in\mathcal{P}_1$ with $i_t$ visited before time $t$ correspond to a doubled edge, and the instance of the smallest $t$ that this happens belong to $\cup_{k\geq 2}\mathcal{P}_k$, which has been constructed via applying Step 2 (b2). Similarly, any $i_t\in\mathcal{P}_1$ visited twice after time $p$ lead to doubled edge and has been constructed via applying Step 2 (b2).}, or (a2) the index $i_t$ hasn't been visited before time $t$. In case (a1) there is a unique $l< t$ with $i_l=i_t$,  (a11) if $l\leq p$ then we insert $i_{t+1}:=i_{l-1}$ into the path and increase the subscript of indices after $i_{t}$, so that $x_{i_{l-1}i_{l}}$ is traversed twice in the resulting path, and (a12) the case $l> p$ will not happen since $\bar{x}_{i_li_{i-1}}$ does not pair with $\bar{x}_{i_ti_{t-1}}$ by independence, unless $i_{t-1}=i_{l-1}$. The latter case corresponds to a self-loop which is constructed in Step 2. In case (a2) then $i_t$ is an odd index, and we do nothing but move on to scan the next index in the path (there must be un-scanned elements remaining). Then if (b) $i_t\in\cup_{i\geq 2}\mathcal{P}_i$, say for example $i_t\in\mathcal{P}_k$, then we can either assign $i_{t+1}$ to be odd or even. If(b1) we assign $i_{t+1}$ to be odd (this must be the case if $i_t$ is not the last index in the current path), then we do no insertion and simply move on to $i_{t+1}$.
Or, (b2) we assign $i_{t+1}$ to be even, then $\bar{x}_{i_{t+1}i_t}$ is a repetition of elements appearing before it, then there are  $k$ choices of $i_{t+1}$, ad we insert this value to the path with label $i_{t+1}$ and increase the label of indices after $i_t$. Then we proceed to scan the next element.

In Step 2 and 3 we have designed two scanning and insertion procedures, and both steps lead to additional combinatorial factors. The important take-away of our construction is that the scanning procedure in Step 2 amounts to the construction of repeated loops that are either both in the first, or second, half of the path whereas the scanning procedure in Step 3 amounts to the pairings of two elements with one in the first half, and the other in the second half, of the path. We have assigned the even vertices in $\cup_{k\geq 2}\mathcal{P}_k$ a label $\{0,1\}$ to determine whether they are used in Step 2 or Step 3, and regardless of this label, every even vertex in $\cup_{k\geq 2}\mathcal{P}_k$ contribute a factor of $2k$ by the number of choice for possible returns.

\textbf{Combining all the combinatorial factors.}
Combining Steps 2 and 3, we have introduced the following combinatorial factor: at each odd instance in $\cup_{k\geq 2}\mathcal{P}_k$, we have two choices to determine whether the next index is odd or even. For an even index there are $k$ choices of return (and there are $kn_k$ vertices in $\mathcal{P}_k$ with even index in the path) while for an odd index there is no additional choice. Thus the total combinatorial factor is 
$$\prod_{k\geq 2}(8k)^{kn_k}.
$$
\textbf{Explanation of this combinatorial factor.} We now explain how this $\prod_{k\geq 2}(\text{const}\cdot k)^{kn_k}$ factor arises: the reasoning here is much the same as the paragraph  `` proving the upper bound for the number of choices for the second half of the path" in the proof of Lemma \ref{lemma2.111}, so we only give a sketch here. The essential reason is that not all the methods of the choice of the next vertex starting from a vertex in $\cup_{k\geq 2}\mathcal{P}_k$ is allowed: only those choices that obey the overall constraint that each such edges appear $2k$ times is contributing to the expectation. Then one can prove by induction, as in Lemma \ref{lemma2.111}, that we get this quantity $\prod_{k\geq 2}(8k)^{kn_k}$ for the possible ways to insert the even edges.

Now we bound $\mathbb{E}[\operatorname{Tr}(A^p(A^*)^p)]$ via this path reconstruction procedure. Again the dominant contribution comes from the case where $n_1=p$ and $n_k=0$ for any $k\geq 2$, and the contribution of this term is as in \eqref{dominantsums}.  For general $n_1,n_2,\cdots n_k$ the contribution is bounded by 
\begin{equation}\label{contributiongeneralspecific}
n\frac{1}{n^p}\frac{n!}{n_0!n_1!\cdots n_p!}\frac{p!}{\prod_{k=2}^p(k!)^{n_k}}\prod_{k=2}^p(2k)^{kn_k}\prod_{k=2}^s(\text{const}\cdot k)^{kn_k} \cdot 4^{\sum_{k\geq 2}kn_k}.
\end{equation}
To explain each term, we first divide $n$ into subsets of size $n_1,\cdots,n_p$, and choose the number of ways in which elements appear in the first half of this product term up to index $p$. At this point we assume all the indices thus appeared up to time $p$ are odd indices, but we have freedom to switch indices in $\mathcal{P}_2,\mathcal{P}_3,\cdots$ to even indices: this leads to the factor $4^{\sum_{k\geq 2}kn_k}.$ The term $(2k)^{kn_k}$ again represents the choice of the next index if the current index lies in $\mathcal{P}_t,t\geq 2$ and is an even index. The term $(\text{const}\cdot k)^{kn_k}$ are the sub-Gaussian moments.

\textbf{Sub-exponential entries in place of subgaussian entries.} An important observation is that \eqref{contributiongeneralspecific} is true even if $x_{ij}$ has only sub-exponential moments \eqref{gaussianmoments}: this is an observation made already in \cite{MR1647832}, Lemma 1. The reason is that the factor $k^{kn_k}$ comes from choosing the vertices of return in every visit to a point in $\cup_{k\geq 2}\mathcal{P}_k$, and if an edge $e=(e_i,e_j)$ is chosen for return for $\ell\geq 2$ times, this would contribute to the sub-exponential moment of $x_{ij}$ by a factor $(\text{const}\cdot \ell)^{2\ell}$, but in this case we need to divide by a factor $\ell!$ as the choice of a returning edge from these $\ell$ identical edges are identical. (In other words, the number of possible returns and the factor in the sub-exponential moments cannot be too large simultaneously). Then the same reasoning as in \cite{MR1647832}, Lemma 1 shows \eqref{contributiongeneral} is true under the weaker sub-exponential moment condition \eqref{gaussianmoments}, which we assume henceforth. More concretely, we are enumerating over all even closed paths $\mathcal{P}$ of length $2p$. Let $(i,j)$ be any directed edge in $\mathcal{P}$, we denote by $2l((i,j))$ the number of occurrence of this directed edge in $\mathcal{P}$, then under the sub-exponential moment condition we need to bound, accounting for the $\ell!$ overcounting:
$$
\prod_{(i,j)}'(\text{const}\cdot l(i,j))^{2l(i,j)}\frac{1}{l((i,j))!}\text{       in place of         }\prod_{(i,j)}'(\text{const}\cdot l(i,j))^{l(i,j)}
$$ where $\prod'$ is the product over all edges with $l(i,j)>1$, and the right hand side is the sub-Gaussian bound without dividing the overcounting. As the left hand side is bounded by the right by a suitable choice of constants, we see that switching to sub-exponential tails does not affect the upper bound \eqref{contributiongeneralspecific}. 

\textbf{Final bound.} Taking the summation over all indices such that $\sum_{k\geq 2}kn_k>0$, we get (as in the proof of Lemma \ref{lemma2.111}, with only change of constants) that the sum is bounded by \begin{equation}\label{lastboundsssa}
n\left[\exp\left(\sum_{k=2}^p\frac{(32e\cdot\text{const}\cdot k\cdot p)^k}{n^{k-1}}\right)-1\right],
\end{equation}which is bounded by $n\frac{p^2}{n}\frac{\text{const}}{1-\text{const}\cdot p^2/n}$
and thus is
$n\cdot O(1)$ whenever $p^2\leq C_0n$ for some constant $C_0$ depending on the sub-exponential moments of $x_{ij}$, and is $o(n)$ for $p\ll n^{\frac{1}{2}}.$  
\end{proof}

Next we show that we can remove the assumption that $x_{ij}$ has a symmetric distribution in Theorem \ref{momenttheorem2.2}. 

\begin{theorem}\label{nosymmetricdistribution} Theorem \ref{momenttheorem2.2} is valid without assuming that $x_{ij}$ has a symmetric distribution. 
    
\end{theorem}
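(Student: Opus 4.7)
The plan is to locate the single use of symmetry in the proof of Theorem \ref{momenttheorem2.2} and show that the combinatorial bound persists without it. The symmetry assumption \eqref{evenproducts} was invoked only to conclude that each directed edge in the associated closed walk of length $2p$ must be traversed an \emph{even} number of times. Dropping symmetry weakens this to the requirement that each edge be traversed at least twice, which still follows from $\mathbb{E}[x_{ij}] = 0$. Hence, the enlargement of the admissible walk class consists precisely of closed walks with at least one directed edge of odd multiplicity $\geq 3$. The claim is that the combinatorial bound \eqref{contributiongeneralspecific}, and hence the final estimate \eqref{lastboundsssa}, continues to control the contribution of this enlarged class.

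First I would bound each walk's contribution in absolute value by $\prod_{e} b_e^{\ell(e)}\cdot\prod_{e}(\text{const}\cdot \ell(e))^{\ell(e)}$, where $\ell(e) \geq 2$ is the multiplicity of edge $e$. This uses only the estimate $|\mathbb{E}[x_{ij}^a\bar{x}_{ij}^b]| \leq \mathbb{E}[|x_{ij}|^{a+b}] \leq (\text{const}\cdot(a+b))^{a+b}$ from \eqref{gaussianmoments}, which is blind to symmetry. Consequently, the per-edge moment factors match those used in the symmetric proof.

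Next I would organize the sum by the first-half vertex-visit profile $(n_k)_{k\geq 1}$ and re-examine the scanning-and-insertion algorithm of Step 2 and Step 3 in Theorem \ref{momenttheorem2.2}. The dominant contribution, from profiles with $n_1 = p$ and $n_k = 0$ for $k \geq 2$, corresponds to walks where the second half retraces the first and each edge is used exactly twice; it is unaffected by symmetry and equals $n(1+o(1))$ as in \eqref{dominantsums}. For non-trivial profiles, the bound $\prod_{k\geq 2}(\text{const}\cdot k)^{kn_k}$ on the number of reconstructions arises from the vertex-based observation that at each visit to an index in $\mathcal{P}_k$ there are at most $O(k)$ previously used incident edges to revisit; this bound is independent of edge parity. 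Any walk containing an edge of odd multiplicity $\ell \geq 3$ forces $n_k > 0$ for some $k \geq 2$, so such walks are already captured by the non-trivial-profile sum, whose total — by the computation leading to \eqref{lastboundsssa} — is $o(n)$ for $p \ll \sqrt n$ and $O(n)$ for $p^2 \leq C_0 n$.

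The main obstacle is that the odd/even index labeling in Theorem \ref{momenttheorem2.2} was designed around the parity matching of edge traversals and cannot be transcribed verbatim once edges may appear an odd number of times. One must restate the insertion step so that, at each visit to a vertex in $\mathcal{P}_k$, the algorithm selects the next edge from the $O(k)$ previously used incident edges \emph{without} imposing any parity of return, and then verify that the inductive count from the proof of Lemma \ref{lemma2.111} still yields at most $\prod_{k\geq 2}(\text{const}\cdot k)^{kn_k}$ reconstructions. Since this counting is purely vertex-based and the moment factor absorbs any extra odd-multiplicity contribution via the sub-exponential bound \eqref{gaussianmoments}, the argument goes through and reproduces the same asymptotics as Theorem \ref{momenttheorem2.2}.
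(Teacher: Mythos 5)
Your starting observation is correct: the symmetry assumption \eqref{evenproducts} is used in Theorem \ref{momenttheorem2.2} only to conclude that every directed edge in the closed walk has even multiplicity, and dropping it leaves the weaker constraint that every edge is traversed at least twice. The per-edge moment bound $\lvert\mathbb{E}[x_{ij}^a\bar{x}_{ij}^b]\rvert\leq (\text{const}\cdot(a+b))^{a+b}$ is indeed parity-blind. But from there the argument has a genuine gap, and I do not think it can be fixed by the light-touch rewording you propose.

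The issue is that the entire bookkeeping scheme of Steps~1--3 in Theorem \ref{momenttheorem2.2} is built around the odd/even index labeling, which is a parity notion: $n_k$ counts vertices that appear $k$ times as an \emph{odd instance}, and the identity $\sum_k kn_k=p$ ($p$ odd instances, $p$ even instances) is a consequence of every edge having even multiplicity. Once odd-multiplicity edges are allowed, this dichotomy no longer partitions the $2p$ steps, so the profile $(n_k)$ you want to organize the sum by is not well-defined, and the factor $\prod_{k\geq 2}(\text{const}\cdot k)^{kn_k}$ --- which in the even case arises because at an even instance the next edge is forced to be one of at most $O(k)$ previously traversed edges incident to a vertex in $\mathcal{P}_k$ --- has no analogue. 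Without the evenness constraint one cannot argue that a return step must retrace an already-used edge; the walk is free to create a fresh edge and balance it later with an odd number of further traversals, which blows up the number of reconstructions for a fixed number of "first-half" distinct vertices. Your claim that walks with an odd-multiplicity edge "are already captured by the non-trivial-profile sum" is therefore circular: that sum only enumerates even walks, and the non-even walks with a given set of distinct vertices form a strictly larger and more delicate class. Asserting that the inductive count of Lemma \ref{lemma2.111} "still yields at most $\prod_{k\geq 2}(\text{const}\cdot k)^{kn_k}$ reconstructions" is exactly the unproven step, and it is where all the real work has to happen.

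The paper resolves this by a \emph{gluing procedure} borrowed from the Wigner treatment of P\'ech\'e: it isolates the odd directed edges, removes the last occurrence of each, and shows that the reduced walk of length $2p-2\ell$ is a union of closed \emph{even} paths (possibly disconnected), to which Theorem \ref{momenttheorem2.2} applies. It then reconstructs the original walk by an insertion procedure whose combinatorics is controlled by explicit binomial factors of the form $\binom{2p-2\ell}{J}\,J!\,2^J\binom{2\ell}{J}\,\frac{(2p-2\ell)!}{(2p-4\ell+J)!}\,n^{-\ell}$, and it must split into three cases (single connected even component, several such components, or components that still contain odd edges and require a second gluing round). The bound $p\ll\sqrt{n}$ then emerges from the surviving factor $(\mathrm{const}\,(p-\ell)/\sqrt{n})^{2\ell}$ for the $\ell$ excised edges. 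None of this structure is visible in your proposal; in particular, the disconnected-component case and the second gluing pass are not incidental technicalities but the reason the reduction can be iterated to termination. I would encourage you to redo the argument by explicitly constructing the even skeleton and counting insertions, rather than trying to transplant the parity-based insertion algorithm verbatim.
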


The proof of Theorem \ref{nosymmetricdistribution} follows somewhat standard arguments, but is too lengthy to be presented here so we defer the proof to Appendix \ref{appendixC}. The idea is to show that contributions to the expectation from non-even paths can be bounded via considering adding odd edges to even paths, and we show such even paths with added odd edges has negligible contribution. Motivated by \cite{peche2007wigner} in the Wigner case, the key component of the proof is a method to construct non-even paths from even paths.

\subsection{Moment estimate II}\label{momentestimate2}
In this section we outline necessary moment computations for the proof of Theorem \ref{convergencespectral}. In contrast to Theorem \ref{momenttheorem2.2} considering moments $p\ll \sqrt{n}$, we now work with much higher moments $p\sim n$ and new combinatorial ingredients are necessary.

This type of results was first obtained for Wigner matrices in \cite{feldheim2010universality} via refined combinatorial techniques. In \cite{feldheim2010universality}, equation (I.5.6) they prove that
\begin{theorem}\label{theorem2.333}
Let $W_n=(w_{ij})$ be an $n\times n$ Wigner matrix with entry distribution satisfying assumptions (1),(2) of Theorem \ref{convergencespectral}. In the complex-valued case we assume further that $\mathbb{E}[w_{ij}^2]=0$ for each $i\neq j$. Then we can find $C>0$ such that for any $k\in\mathbb{N}_+$,
$$\mathbb{E}\left[\operatorname{Tr}\left(\frac{W_n}{2\sqrt{n-2}}\right)^k\right]
\leq C\frac{n}{k\sqrt{k}}e^{C\frac{k^3}{n^2}}.
$$
\end{theorem}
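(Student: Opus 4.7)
The plan is to apply the high-moment trace method of Sinai--Soshnikov (as refined by Feldheim--Sodin), organizing closed walks of length $k$ by their graph isomorphism type, and extracting the contribution of Dyck-type walks as the leading term. I begin with the expansion
$$
\mathbb{E}\operatorname{Tr}(W_n^k) = \sum_{i_0,i_1,\ldots,i_{k-1}=1}^n \mathbb{E}\bigl[w_{i_0 i_1} w_{i_1 i_2} \cdots w_{i_{k-1} i_0}\bigr].
$$
By independence of entries (up to the Hermitian symmetry $w_{ij}=\overline{w_{ji}}$) together with the vanishing of $\mathbb{E}[w_{ij}]$ and, in the complex case, the assumption $\mathbb{E}[w_{ij}^2]=0$, a term is nonzero only if every undirected edge in the associated closed walk is traversed at least twice, and in the complex case each directed edge is matched with its reverse. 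Consequently, a walk of length $k$ with nonzero contribution has at most $k/2$ distinct edges and hence at most $k/2+1$ distinct vertices.

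The second step is to isolate the dominant contribution, which comes from walks saturating the vertex bound $k/2+1$. Such walks have a tree skeleton with $k/2$ edges, each traversed exactly twice, and they are in bijection with pairs (plane rooted tree on $k/2$ edges, injection of its vertex set into $[n]$). The number of plane rooted trees with $k/2$ edges is the Catalan number $C_{k/2}$, and each such walk contributes exactly $1$ to the expectation since $\mathbb{E}|w_{ij}|^2=1$. Hence the leading part equals
$$
C_{k/2}\,n(n-1)\cdots(n-k/2),
$$
and after normalization by $(2\sqrt{n-2})^k$ together with the asymptotic $C_{k/2}\sim 4^{k/2}/(\sqrt{\pi}(k/2)^{3/2})$, this produces the claimed prefactor $Cn/k^{3/2}$ (with the choice of $n-2$ rather than $n$ exactly arranged to cancel a lower-order polynomial factor arising from $n(n-1)\cdots(n-k/2)/n^{k/2+1}$).

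The third, and clearly hardest, step is to bound the contribution of walks that fail to saturate the vertex count: these are walks whose skeleton has cycles, or where some edge is traversed more than twice. I would stratify them by the defect parameter $s := (k/2+1) - |\text{distinct vertices}|$, which measures how far the walk deviates from Dyck type. For each fixed $s$ one needs two estimates: a combinatorial count of the isomorphism types of walks with a given defect, and a local moment bound at each ``high-multiplicity'' vertex using the sub-Gaussian assumption (2), contributing factors of the form $(\mathrm{const}\cdot q)^q$ at a vertex visited $q$ times. The walk-shape count grows like $(Ck)^{3s}$ (corresponding to choices of where cycles close and where edges are re-traversed), while the loss of vertex freedom contributes $n^{-s}$, so the $s$-th stratum is bounded by $(Ck^3/n^2)^s/s!$ up to constants. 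Summing over $s$ yields the exponential correction $\exp(Ck^3/n^2)$.

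The main obstacle is the combinatorial enumeration in this third step: one must carefully count walk shapes so that each of the $s$ ``anomalies'' costs exactly $k^3$ (and not more) in the enumeration, so that when paired with the $n^{-2}$ coming from the lost vertex degree of freedom (one lost vertex together with an edge-closure choice scaling like $k^2$), the accumulated error matches the claimed bound. The Sinai--Soshnikov argument, adapted in \cite{feldheim2010universality}, accomplishes this via an explicit injective coding of walks by pairs (Dyck path, decoration), where the decoration records the locations and types of self-intersections; checking that this coding is indeed injective and that the decoration set has the required size is the technical heart of the proof.
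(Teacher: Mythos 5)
This theorem is not proved in the paper; it is quoted verbatim from Feldheim--Sodin (the text introduces it with ``In \cite{feldheim2010universality}, equation (I.5.6) they prove that''). So there is no in-paper proof to compare your sketch against, and the right standard is whether your sketch reproduces the Feldheim--Sodin argument.

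It does not, and there is a concrete arithmetic gap in Step~3. You write that the walk-shape count per defect grows like $(Ck)^{3s}$ while the loss of vertex freedom gives $n^{-s}$, and then assert that the $s$-th stratum is $\leq (Ck^3/n^2)^s/s!$. But $(Ck)^{3s}\cdot n^{-s} = (C^3k^3/n)^s$, not $(Ck^3/n^2)^s$; you have silently gained an extra factor $n^{-s}$ that your own enumeration does not produce, and the $1/s!$ is also unsupported by the counts you stated. Even with the sharper Feldheim--Sodin count (their diagram enumeration gives a stratum like $(Ck^3/(s^2 n))^s$, cf. the paper's display \eqref{omegaj2s}), summing over $s$ produces an error $\exp(Ck^{3/2}/n^{1/2})$, which is the exponent for the non-backtracking (Chebyshev) trace, not the exponent $k^3/n^2$ in the theorem you are trying to prove. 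For $k<n$ the bound $e^{Ck^3/n^2}$ is strictly stronger than $e^{Ck^{3/2}/n^{1/2}}$, so a direct ``Dyck path $+$ decoration'' stratification of the raw walk sum does not deliver the claimed bound; one has to route through the Chebyshev-polynomial (non-backtracking) trace $\operatorname{Tr}U_k$, as Feldheim--Sodin do, and then transfer back to $\operatorname{Tr}W_n^k$ using the expansion of $x^k$ in the $U_j$'s (which concentrates on small $j$ and thereby tightens the exponent). The paper itself flags exactly this mechanism in the subsection ``An illustrative explanation for the proof of Theorem \ref{mainresultveryhighbounds}''. Steps 1 and 2 of your sketch are fine, but Step 3 as written cannot close.
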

We note that for Gaussian random matrices, one can use the theory of Gaussian processes to derive a similar bound as in Theorem \ref{theorem2.333} without too much difficulty; see \cite{bandeira2016sharp}, Lemma 2.2. 

In the non-Hermitian setting, there are no results similar to Theorem \ref{theorem2.333} concerning traces of high powers, to the author's best knowledge. For Gaussian matrices (the complex/real Ginibre ensemble), the situation does not appear to be simpler, as the spectral radius of a Gaussian matrix is not a Gaussian process (in contrast to the spectral norm) and the considered quantity \eqref{consideredquantity} does not admit a closed form expression in terms of integrable formulas of real/complex Ginibre ensemble. We will derive the necessary moment bounds by ourselves, adapting the combinatorial tools in \cite{feldheim2010universality} to the non-Hermitian setting. The main result is
\begin{theorem}\label{mainresultveryhighbounds}
    Let $G_n=(\frac{1}{\sqrt{n}}g_{ij})$ be an $n\times n$ matrix with independent entries, where $g_{ij}$ are i.i.d. random variables satisfying assumptions (1),(2),(3) of Theorem \ref{convergencespectral}. Then we can find $C>0$ depending only on the constant in equation \eqref{gaussianmomentsrealgaussian}  such that for any $k\in\mathbb{N}_+,k\leq n$
    \begin{equation}\label{consideredquantity}
\mathbb{E}[\operatorname{Tr}G_n^k(G_n^*)^k]\leq Ck^2n^6e^{C[\frac{k^{3/2}}{n^{1/2}}+\frac{k^2}{n}]}.
    \end{equation}
\end{theorem}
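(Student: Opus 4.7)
The plan is to expand the trace as a sum over closed walks of length $2k$ on the directed edge set and then enumerate these walks using a combinatorial encoding adapted from \cite{feldheim2010universality}. Writing
\[
\mathbb{E}[\operatorname{Tr} G_n^k(G_n^*)^k] \;=\; n^{-k}\!\!\!\sum_{\substack{i_0,\ldots,i_k\\ j_{k-1},\ldots,j_0}}\!\! \mathbb{E}\bigl[g_{i_0 i_1}\cdots g_{i_{k-1}i_k}\,\bar g_{j_{k-1} i_k}\cdots \bar g_{i_0 j_0}\bigr],
\]
each closed walk $W$ traverses the first $k$ steps as ``$G$-edges'' (forward) and the last $k$ steps as ``$G^*$-edges.'' By independence together with (1)–(3) of Theorem~\ref{convergencespectral} (symmetry in the real case, and rotational invariance in the complex case), a nonzero contribution forces every directed edge $(a,b)$ to appear with equal multiplicity in the $G$-half and the $G^*$-half (in the real case one only needs parity of the total multiplicity). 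We call such walks \emph{admissible}; they are analogues of the ``even closed paths'' appearing in Section~\ref{momentestimate1}.

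Following the strategy of Theorem \ref{momenttheorem2.2}, I would parametrize admissible walks by $n_r$ = number of vertices visited exactly $r$ times (with $\sum_r n_r \le n$, $\sum_r r n_r = k$ after an appropriate normalization) and, more crucially, by the \emph{excess} $s := \#\{\text{edges of }W\} - (\#\text{vertices}-1)$ measuring deviation from a tree skeleton. Fix $s$: one first selects a skeleton subgraph (essentially a pair of trees joined at the turning vertex $i_k$), then inserts the $s$ ``extra'' edges at specific ``marked'' positions along the walk. In the Hermitian case of \cite{feldheim2010universality} the resulting bound involves a Catalan factor and an $e^{Ck^3/n^2}$ penalty from edge repetitions; in the non-Hermitian case we have to keep track of two separate halves of the walk but the sub-Gaussian moment contribution $\prod_{r\ge 2}(\text{const}\cdot r)^{rn_r}$ is identical, and the two halves each contribute a Catalan-type count.

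Step by step: (i) reduce to admissible walks and collect the moment factor $\prod_e (C \ell(e))^{2\ell(e)}$, where $\ell(e)$ is the multiplicity of edge $e$ in the $G$-half; (ii) decompose the walk into a skeleton tree plus $s$ ``excess returns'' and bound the number of walks with given $(n_r)_r$ by the analogue of the bound \eqref{contributiongeneralspecific}, but now carried up to $k=O(n)$ where the combinatorial generating function does not simplify; (iii) bound the resulting sum as in \eqref{lastboundsssa}, replacing the finite series by an explicit generating function. The excess $s$ generates the $k^{3/2}/\sqrt n$ factor (through the insertion of individual extra edges on a length-$k$ skeleton, giving $\sim (k\cdot k^{1/2}/n^{1/2})^s/s!$), and grouped multi-edges generate the $k^2/n$ factor exactly as in Section~\ref{momentestimate1}. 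Summing over $s,(n_r)$ yields $e^{C(k^{3/2}/n^{1/2}+k^2/n)}$, with polynomial prefactor absorbed into $k^2 n^6$.

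The main obstacle is in step~(ii): in \cite{feldheim2010universality} the encoding of Wigner walks to decorated Dyck paths exploits the symmetry of the adjacency structure, while here the walk has two \emph{oriented} halves joined at $i_k$ and $i_0$ that must be consistent (each directed edge has the same forward and backward count). One must set up an injective encoding that tracks, for each marked position, not only which previously used edge is being repeated but also whether the repetition occurs in the $G$-half or in the $G^*$-half, much as the two-phase scanning procedure in the proof of Theorem \ref{momenttheorem2.2}. Controlling this parity/orientation bookkeeping, while avoiding over-counting that would inflate the bound beyond $e^{Ck^{3/2}/n^{1/2}}$, is the delicate combinatorial ingredient. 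The loose polynomial prefactor $k^2 n^6$ is harmless: it is dominated by the exponential factor in the range $k\le n$ that we care about, and so no sharp version of the skeleton enumeration is needed.
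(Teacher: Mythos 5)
Your high‑level picture of where the two exponential factors come from is correct: excess edges relative to a tree skeleton produce the $e^{Ck^{3/2}/\sqrt n}$ term (via a sum like $\sum_s (k^3/n)^{s-1}/(2s-2)!$) and the backtracking/forest corrections produce the $e^{Ck^2/n}$ term. You also correctly flag orientation bookkeeping as a subtlety. But the combinatorial engine you propose does not reach $k = O(n)$, and this is a genuine gap, not a matter of bookkeeping.

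Concretely, in step (ii) you propose to parametrize admissible walks by the vertex‑multiplicity profile $(n_r)_r$ and bound their number by ``the analogue of \eqref{contributiongeneralspecific},'' then sum as in \eqref{lastboundsssa}. That bound contains the ordering factor $p!/\prod_k (k!)^{n_k}$ from Section~\ref{momentestimate1}, which massively overcounts: it counts permutations of vertex appearances that all describe isomorphic walks. The resulting sum $\sum_k (Ckp)^k/n^{k-1}$ (cf.\ \eqref{lastboundsssa}) diverges once $p^2/n$ is not small; the $k=2$ term alone is of order $p^2/n \sim n$ when $p \sim n$. This is not a failure to sum carefully — the series itself blows up — and the paper points this out explicitly when it says the earlier literature's naive moments stop at $p = n^{1/3-\epsilon}$ and that ``new combinatorial ingredients are necessary.'' In other words, the $(n_r)$-decomposition is the right tool for Theorem~\ref{momenttheorem2.2} ($p\ll\sqrt n$), not for Theorem~\ref{mainresultveryhighbounds}.

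The paper's actual route (Propositions \ref{prop2.8new}, \ref{prop2.9new}) replaces the $(n_r)$-enumeration by the diagram/automaton reduction of Feldheim–Sodin: each matched non‑backtracking walk maps to a reduced diagram $(\bar G,\bar p,\bar w)$ with $|\bar V|=2s,\ |\bar E|=3s-1$, the count of such diagrams is $(Cs)^s$, and the weights live on $\binom{k+3s-2}{3s-2}$ assignments — a far tighter enumeration than the multinomial in \eqref{contributiongeneralspecific}. On top of that, one must separately split off backtracking walks (Step 4 in the paper), reduce them to a non‑backtracking core plus forests, and count the forest contribution via $t$-diagrams, which is where $e^{Ck^2/n}$ really comes from. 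Mentioning Feldheim–Sodin in passing is not the same as using this reduction; without it, the scheme you outline would be capped around $k\lesssim\sqrt n$, which is exactly the regime already covered by Theorem~\ref{momenttheorem2.2}. To repair the proposal you would need to (i) note that i.i.d.\ off‑diagonal independence makes the $G$- and $G^*$-halves automatically non‑backtracking for the leading‑order contribution, (ii) pass from $(n_r)$ counting to the diagram automaton of \cite{feldheim2010universality} adapted to the directed/matched setting (this is where the orientation bookkeeping you worry about actually lives), and (iii) carry out the forest decomposition for the residual backtracking walks.
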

The leading factor $n^6$ is possibly unnecessary, but this does not lead to any suboptimality in the proof of Theorem \ref{convergencespectral}. This is because we will set $k\gg\log n$, so that $n^\frac{6}{k}\to 1.$

\subsubsection{Recap of combinatorial terminologies}

Our proof of Theorem \ref{mainresultveryhighbounds} uses many combinatorial terminologies from \cite{feldheim2010universality}. We outline these combinatorial constructions. 

In our combinatorial construction we differentiate between two cases: the real case $\beta=1$ and the complex case $\beta=2$. The estimates are slightly different in both cases.

Fix $n\in\mathbb{N}_+$. Consider a path $p_{2k}=u_0u_1\cdots u_{2k}$ on $\{1,2,\cdots,n\}$ satisfying
\begin{enumerate}
    \item 
 $u_j\neq u_{j-1}$ (no self-loops) for each $j$; \item $u_j\neq u_{j-2}$ (non-backtracking) for each $j$; \item $u_0=u_{2k}$ (closed path); \item \begin{enumerate}
     \item  In the real case for any $u\neq v\in[n]$, the number of $j\in[2k]$ such that $u_j=u,u_{j+1}=v$ and the  number of $j\in[2k]$ such that $u_j=v,u_{j+1}=u$ are equal mod 2. \item In the complex case for any $u\neq v$, the number of $j$ such that $u_j=u,u_{j+1}=v$ equals the number of $j$ such that $u_j=v,u_{j+1}=u$.
\end{enumerate}\end{enumerate}

\begin{Definition}\label{directmatchings}(Directed multigraphs and matching)
From this path $p_{2k}$ we construct a directed multigraph $G=(V,E_{\text{dir}})$ where $V\subset\{1,\cdots,n\}$ consists of all vertices $u_j,0\leq j\leq 2k$, and $E_{\text{dir}}$ is the collection of all directed edges $u_{j-1}\to u_j$ (counting multiplicity). We define a matching of $p_{2k}$ as an involution of $\{0,1,\cdots,2k-1\}$ that satisfies (i) in the real case, an edge $(u,v)$ is matched to $(u,v)$ or $(v,u)$; or (ii) in the complex case, each edge $(u,v)$ is matched to $(v,u)$.
\end{Definition}
In Definition \ref{directmatchings}, we use the multigraph $G$ to encode the path $p_{2k}$. To have a heuristic understanding of our definition of matchings (a.k.a., pairings), simply note that these pairings will arise when we apply Wick formula for Gaussian matrices: the number of matchings is exactly the combinatorial factor of the number of pairings showing up in Wick formula.   

We call a path $p_{2k}$ with a matching as a matched path. To effectively estimate the number of matched paths, in \cite{feldheim2010universality} the authors introduced an algorithm of combinatorial reduction, to reduce a matched path to the following simpler object:

\begin{Definition}
\cite{feldheim2010universality} For $\beta=1,2$, we define a diagram of class $\beta$ as an undirected multigraph $\bar{G}=(\bar{V},\bar{E})$ combined with a circuit $\bar{p}=\bar{u}_0\bar{u}_1\cdots\bar{u}_0$ on $\bar{G}$ satisfying that 
\begin{enumerate}
\item $\bar{p}$ is not backtracking, meaning that $\bar{u}_j\neq\bar{u}_{j-2}$ for each $j$. \item For any edge $(\bar{u},\bar{v})\in\bar{E}$, in the real case the number of $j$ with $\bar{u}_j=\bar{u},\bar{u}_{j+1}=\bar{v}$ plus the number of $j$ with $\bar{u}_j=\bar{v},\bar{u}_{j+1}=\bar{u}$ equals 2; and in the complex case both quantities are one. \item The degree of $\bar{u}_0$ in $\bar{G}$ is 1 and the degree of any other vertices is $3$.\end{enumerate} Finally, we add a weight function $\bar{w}:\bar{E}\to \{-1,0,1,2,\cdots\}$ defined in the graph $\bar{G}$.

As noted in \cite{sodin2010spectral}, Remark 2.4, $\bar{G}$ is a multigraph where the coinciding edges are distinguished, so the circuit is not uniquely defined by the vertices through which it passes.
\end{Definition}

The algorithm of reducing matched paths to diagrams is as follows:

\begin{Proposition}\label{prop2.8new}
(\cite{feldheim2010universality}, Claim II.1.4) There exists a mapping from the set of matched paths $p_{2k}$ to a diagram $(\bar{G},\bar{p},\bar{w})$. By \cite{feldheim2010universality}, for each weighted diagram $(\bar{G},\bar{p},\bar{w})$ there are no more than $n^{|\bar{V}|+\sum_{\bar{e}}\bar{w}(\bar{e})}$ matched paths that are mapped to this weighted diagram $(\bar{G},\bar{p},\bar{w})$.
\end{Proposition}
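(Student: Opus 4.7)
The statement is essentially Claim II.1.4 of \cite{feldheim2010universality}, and I would adapt its two-step strategy to the directed-edge setting here. The first step is to define the mapping from matched paths to weighted diagrams explicitly by iterative contraction; the second step is to bound the size of each fiber by reversing the contraction.

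For the mapping, starting from the directed multigraph $G$ associated with $p_{2k}$, I would repeatedly identify any vertex $v \neq u_0$ whose underlying undirected degree is $2$, merge the two incident edges into a single edge, and record the contraction by incrementing the weight of the merged edge by one. The non-backtracking property of the induced circuit and the matching constraint from Definition \ref{directmatchings} are preserved at every step (since a contraction only affects the local structure around $v$ and the matching on the two incident edges is either internal or uniquely extended across the merge). Upon termination, the resulting graph $\bar{G}$ has root of degree $1$ and every other vertex of degree exactly $3$; together with the induced circuit $\bar{p}$ and weight function $\bar{w}$, this yields a valid class-$\beta$ diagram.

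To bound the number of matched paths mapping to a fixed diagram $(\bar{G}, \bar{p}, \bar{w})$, I would reconstruct preimages by reversing the contraction: (i) assign a label in $\{1, \ldots, n\}$ to each vertex of $\bar{V}$, giving at most $n^{|\bar{V}|}$ choices; (ii) for each edge $\bar{e}$, insert a chain of $\bar{w}(\bar{e})$ intermediate vertices whose labels are chosen independently from $\{1, \ldots, n\}$, giving at most $n^{\bar{w}(\bar{e})}$ choices per edge. The matching on the reconstructed path and the order of traversal are forced by the diagram structure together with the chain insertion, so no further combinatorial factors enter. Multiplying these bounds yields $n^{|\bar{V}| + \sum_{\bar{e}} \bar{w}(\bar{e})}$.

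The main obstacle is verifying in the non-Hermitian setting that the contraction respects all structural constraints. The complex case ($\beta = 2$) is the more rigid one, since each directed edge must be matched to its reverse: one must check that contracting a degree-$2$ vertex never forces a matching that violates this condition, which requires a short case analysis on the local structure of the matching around the contracted vertex. The real case ($\beta = 1$) is similar but allows an additional binary choice per matched pair (straight versus flipped); these choices are absorbed into the definition of the matched path so that they do not inflate the preimage count beyond the stated bound. Both verifications follow the combinatorial template of \cite{feldheim2010universality} with only minor modifications for the directed structure, and the resulting preimage bound is exactly what is asserted.
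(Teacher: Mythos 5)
Your reconstruction follows the contraction-then-reconstruction argument of Feldheim--Sodin's Claim II.1.4 verbatim, which is exactly what the paper invokes (the paper cites this result without reproving it). The only cosmetic gap is that the edge-weight $\bar w(\bar e)=-1$ case does not literally mean "insert $-1$ intermediate vertices" — it arises when two edges already coincide after contraction — but the bound $n^{\bar w(\bar e)}$ is still valid there, so the counting is unaffected and your proposal is essentially correct.
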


The final step is to find an upper bound for the number of diagrams. In \cite{feldheim2010universality}, Section II.2, an algorithm was designed for this purpose:

\begin{Proposition}\label{prop2.9new}(\cite{feldheim2010universality}, Section II.2) There exist an automaton generating all possible  diagrams.
In the complex case $\beta=2$, each diagram is generated in $s$ steps for some $s\in\mathbb{N}$, and for a diagram generated in $s$ steps the diagram has $|\bar{E}|=3s-1$ edges and $|\bar{V}|=2s$ vertices. In the real case $\beta=1$, each diagram can be generated in some $s\in\mathbb{N}$ steps with $|\bar{E}|=3s-1$ edges and $|\bar{V}|=2s$ vertices. In \cite{feldheim2010universality}, The number of diagrams with parameter $\beta\in\{1,2\}$ and generated in $s$ steps has cardinality $D_\beta(s)$ which satisfy, for some universal constant $C>0$,
\begin{equation}\label{dbetas}
    (s/C)^s\leq D_\beta(s)\leq (Cs)^s.
\end{equation}
\end{Proposition}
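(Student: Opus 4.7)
The plan is to adapt Feldheim--Sodin's combinatorial enumeration to the class of diagrams at hand. The first observation is that the edge/vertex identities $|\bar E|=3s-1$ and $|\bar V|=2s$ are purely a consequence of the degree sequence: since $\bar u_0$ has degree $1$ and every other vertex has degree $3$, the handshake lemma gives $2|\bar E|=3(|\bar V|-1)+1$, and setting $s:=|\bar V|/2$ forces $|\bar E|=3s-1$. Thus $s$ is determined by the vertex count, and the task reduces to designing an automaton whose step count matches $|\bar V|/2$, and then bounding the per-step branching.

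I would construct the automaton by traversing the circuit $\bar p$ starting at $\bar u_0$, recording the diagram incrementally. The state tracks the partial multigraph built so far, the current vertex, and the multiset of ``open half-edges'' (edges already traversed once but not the required number of times). At each move the automaton either (a) introduces a fresh vertex and a new edge, or (b) traverses an existing or new edge to a previously visited vertex, closing one of the open half-edges. To arrive at exactly $|\bar V|/2$ steps I would group moves so that each step accounts for the creation and eventual closure of exactly two vertices; the degree-$3$ constraint guarantees that such a grouping is well defined. The $\beta=1$ versus $\beta=2$ distinction enters only in condition (4): in the real case a pair of parallel edges may be traversed in either orientation, while in the complex case opposite orientations are forced, which changes the internal book-keeping but not the step count.

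For the upper bound $D_\beta(s)\le (Cs)^s$ one verifies that at each of the $s$ steps the automaton has at most $Cs$ admissible moves: the non-backtracking condition and the degree-$3$ constraint leave only a bounded local choice, while selecting a target among previously visited vertices (for return-type moves) contributes the factor $O(s)$. Multiplying over $s$ steps gives $(Cs)^s$. For the matching lower bound $(s/C)^s\le D_\beta(s)$, I would exhibit an explicit family by attaching, along a backbone circuit of length $\Theta(s)$, small gadget subdiagrams at variable positions chosen independently from a pool of size $\Omega(s)$; a direct count of these attachments produces $(s/C)^s$ distinct diagrams.

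The main obstacle is ensuring the automaton generates each diagram exactly once and that the per-step branching is truly $O(s)$. The multigraph nature of $\bar G$ (parallel edges traversed non-backtrackingly) together with the two variants of the matching condition in (4) make a naive encoding over-count, so a careful labeling of edges by first-traversal time, together with a bounded-length integer code for each return move, is required. Checking that this encoding is preserved by the state transitions of the automaton and respects the parity/matching constraints is where most of the technical work lies; this is precisely the content of the automaton construction in \cite{feldheim2010universality}, Section II.2, and that scheme carries over here with only cosmetic modifications coming from the directed-edge version of condition (4).
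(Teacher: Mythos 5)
This proposition is stated in the paper purely as a citation to \cite{feldheim2010universality}, Section~II.2; the paper itself offers no proof, so there is nothing internal to compare your argument against. What you have written is a plausible reconstruction of the Feldheim--Sodin construction, and your opening observation is both correct and cleaner than what is usually written out: the handshake lemma applied to the degree sequence (one vertex of degree $1$, all others of degree $3$) gives $2|\bar E|=3|\bar V|-2$, so once one declares $|\bar V|=2s$ the count $|\bar E|=3s-1$ is automatic. This correctly reduces the content of the first part of the proposition to showing that the automaton's step counter equals $|\bar V|/2$, which is how the Feldheim--Sodin automaton is indeed organised.

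For the bounds $(s/C)^s\le D_\beta(s)\le(Cs)^s$, your sketch (per-step branching $O(s)$ for the upper bound; an explicit backbone-plus-gadgets family for the lower bound) is the standard shape of the argument. The genuine technical content --- that the encoding of return moves is injective and of per-step cost $O(\log s)$ bits, and that the real versus complex matching convention changes only bookkeeping and not the step count --- is deferred to \cite{feldheim2010universality}, which is appropriate here since the paper itself also defers it. One caution worth flagging for a reader: in a multigraph the non-backtracking constraint does not by itself make the local branching $O(1)$ (parallel edges through the same vertex must be distinguished), so the $O(s)$ per-step bound really rests on the first-traversal-time labelling scheme you mention, not just on degree bounds. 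With that caveat noted, the sketch is sound in outline and consistent with the cited source.
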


\subsubsection{An illustrative explanation for the proof of Theorem \ref{mainresultveryhighbounds}}
We outline how the proof of Theorem \ref{theorem2.333} from \cite{feldheim2010universality} can be adapted to the proof of Theorem \ref{mainresultveryhighbounds}, at a non-rigorous level. Only the main ideas are highlighted here and the details are presented later.

The key estimate in the proof of Theorem \ref{theorem2.333}  is in \cite{feldheim2010universality}, Theorem I.5.3, which states that:

Let $U_n$ denote the $n$-th Chebyshev polynomial $U_n(\cos\theta)=\frac{\sin((n+1)\theta)}{\sin\theta}$, then $$\mathbb{E}[\operatorname{Tr}U_k(\frac{W_n}{2\sqrt{n-2}})]\leq Ck\exp(Ck^{3/2}/n^{1/2}).$$ It can be shown that the composition by Chebyshev polynomial essentially amounts to counting only the non-backtracking paths in the trace expansion of $(W_n)^k$. Also note that $U_k(x)\sim (2x)^k$ when $x$ is large, which cancels a factor $2$ in the denominator of $\frac{W_n}{2\sqrt{n-2}}$.
In the non-Hermitian case, because the non-backtracking condition on path $p_{2k}$ is automatically imposed on the expansion $\operatorname{Tr}(G_n^k(G_n^*)^k)$ thanks to independence of entries of $G_n$, one expects the same computation should also generalize. There do exist paths that are backtracking and contribute to the expectation, but the contribution of these backtracking paths will be negligible compared to the non-backtracking paths, as we shall prove later.

\subsubsection{The proof of of Theorem \ref{mainresultveryhighbounds}}
\begin{proof}[\proofname\ of Theorem \ref{mainresultveryhighbounds}, the real case $\beta=1$] Fix $i,j\in [n]$. Let $P$ be a path from $i$ to $j$ of length $k$, that is, the path $P$ can be denoted by $P:i=p_0\mapsto p_1\mapsto \cdots\mapsto p_k=j$. For this path $P$ we can define its weight $w(P)$ by $$w(P)=\prod_{t=0}^{k-1}g_{p_tp_{t+1}}, 
$$  and we denote by $\bar{w}(P)$ the complex conjugate of $w(P)$. We can expand the trace via
    $$ 
\operatorname{Tr}\left(G_n^k(G_n^*)^k\right)=\sum_{i,j}\sum_{P_1,P_2:i\mapsto j}w(P_1)\bar{w}(P_2),
    $$ where $P_1,P_2$ are directed paths from $i$ to $j$ of length $k$.

    \textbf{Step 1: Reduction to path counting.}
    We can construct from the paths $(P_1,P_2)$ a path $P$ of length $2k+2$ as follows: we start from $i$, then follow $P_1$ to vertex $j$, then take the edge $(j\mapsto i)$, then follow $P_2$ to get to vertex $j$ again, and finally return to vertex $i$ via edge $(j,i)$. For fixed $i,j$ this defines a unique mapping from pairs of paths $(P_1,P_2)$ from $i$ to $j$ of length $k$, and a cycle of length $2k+2$ rooted at edge $(j,i)$ (we say the path of length $2k+2$ is rooted at $(j,i)$ if the $k+1$-th and the $2k+2$-th step of the path both take the edge $j\to i$).  Let $\mathcal{P}_{(j,i)}(2k+2)$ denote the set of even rooted paths of length $2k+2$ rooted at $(j,i)$, where a rooted path is called an even rooted path if each directed edge has an even multiplicity. Since $g_{ij}$ are symmetric random variables, to get a nonzero expectation for $\mathbb{E}[w(P_1)\bar{w}(P_2)]$, the resulting path $P$ of length $2k+2$ must have each edge an even multiplicity. Then 
    \begin{equation}\label{expectationterms}\mathbb{E}\left[\operatorname{Tr}\left(G_n^k(G_n^*)^k\right)\right]
 =\sum_{i,j}\sum_{P\in\mathcal{P}_{(i,j)}(2k+2)}\frac{1}{n^k} \mathbb{E}[W_r(P)],  \end{equation} where for an even rooted path $P=(p_0,p_1,\cdots,p_{2k+2})$ of length $2k+2$ and root $(j,i),$ we define 
$$
W_r(P)=\prod_{t=0}^{k-1}g_{p_tp_{t+1}}\cdot \prod_{t=k+1}^{2k}\bar{g}_{p_tp_{t+1}},
$$ note that we have taken complex conjugates in the second part of the path and we have omitted elements $g_{p_kp_{k+1}}$ and $g_{p_{2k+1}p_{2k+2}}$ in the product defining $W_r(P)$.

We take a decomposition of $\mathcal{P}_{(i,j)}(2k+2)$ into two subsets 
$$\mathcal{P}_{(i,j)}(2k+2)=\mathcal{P}^1_{(i,j)}(2k+2)\cup \mathcal{P}^2_{(i,j)}(2k+2)$$
where $\mathcal{P}^1_{(i,j)}(2k+2)$
consists of paths that are non-backtracking and has no edges $(e,e)$ in it (so that $p_i\neq p_{i+1},p_{i+2}$ for each $i=0,1,\cdots,2k$.) Let  $\mathcal{P}^2_{(i,j)}(2k+2)$ consist of other paths that do not satisfy these conditions. The collection of paths in $\mathcal{P}^1_{(i,j)}(2k+2)$ make the primary contribution to the expectation in \eqref{expectationterms}, which we now estimate. 

\textbf{Step 2: Non-backtracking paths via Wick formula in the Gaussian case.}
First we assume that $g_{ij}$ are standard real Gaussian variables, then by Wick formula, 
\begin{equation}
\label{wickformulagood}
\sum_{P\in\mathcal{P}^1_{(i,j)}(2k+2)}\mathbb{E}[W_r(P)]\leq\Omega_{(i,j)}(2k+2), 
\end{equation} where $\Omega_{(i,j)}(2k+2)$ is the number of doubled matched paths (a path whose each directed edge has even multiplicity and where a matching has been assigned to this path, see Definition \ref{directmatchings}) $P$ of length $2k+2$, containing the edge $(j,i)$, satisfying (a) $p_i\neq p_{i+1}$ for each $i,$  (b) $p_i\neq p_{i+2}$ for each $i$, (c) $p_0=p_{2k+2}$. and (d) are matched in the following sense: each edge $(u,v)$ is matched either to $(u,v)$ or to $(v,u)$. Since the entries $g_{ij}$ are symmetric, for the expectation to be non-vanishing we require that (e) for any $u,v$, the number of times each edge $(u,v)$ and $(v,u)$ appear in $P$ are equal mod 2.
These five constraints are the same as the definition of matched paths in Definition \ref{directmatchings}.

We give more explanation to the validity of \eqref{wickformulagood} for why it corresponds to paths with a matching: by Wick formula (\cite{nica2006lectures}, Theorem 22.3 for the real case and Remark 22.5 for the complex case) the expectation of $W_r(P)$ is given by numbers of pairings without pairing the roots: the $(k,k+1)$ and $(2k+1,2k+2)$-th edge of $P$. These two edges $j\to i$ can be further paired together resulting in the counting $\Omega_{(i,j)}(2k+2)$: note that for this quantity $\Omega_{(i,j)}(2k+2)$ we not only count the number of paths, but also count the number of possible matchings associated to this path. This gives rise to Wick formula computations.

\textbf{Step 3: Counting non-backtracking paths.}
In the real case $\beta=1$, we can estimate the quantity $\Omega_{(i,j)}(2k)$ for each $k\in\mathbb{N}$ just as in \cite{feldheim2010universality}. \footnote{For future use, the fact that such path should go through $(j,i)$ is not very important and we shall ignore this restriction: this leads to an overcounting of at most $n^2$.}
By Proposition \ref{prop2.8new} and \ref{prop2.9new}, each such matched path is mapped to a weighted diagram $(\bar{V},\bar{E},\bar{p})$ in $s$ steps for some $1\leq s\leq k$, with $|\bar{V}|=2s$, $|\bar{E}|=3s-1$, and with a set of weight functions $\bar{w}(e)$ satisfying that $\sum_{\bar{e}}\bar{w}(\bar{e})=k-|\bar{E}|=k-3s+1$. The number of diagrams $(\bar{V},\bar{E},\bar{p})$ generated in $s$ steps is denoted by $D_\beta(s)$, which satisfies the upper bound \eqref{dbetas}. The number of ways to assign the weights $\bar{w}$ on this path is $\binom{k+3s-2}{3s-2}$ and the number of ways to choose vertices is $n^{2s+k-3s+1}=n^{k-s+1}$. Therefore
\begin{equation}\label{omegaj2s}\begin{aligned}
\Omega_{(i,j)}(2k)&\leq \sum_{1\leq s\leq k}D_\beta(s)n^{k-s+1}\frac{(k+3s-2)^{3s-2}}{(3s-2)!}\\&\leq\sum_{1\leq s\leq k}C^{s}s^sn^{k-s+1}\frac{(k+3s-2)^{3s-2}}{(3s-2)!}\\&\leq kn^k\sum_{s\geq 1}\frac{(C_1k^3/n)^{s-1}}{(2(s-1))!}\leq kn^k\exp(C_2k^{3/2}/n^{1/2}),
\end{aligned}\end{equation} 
where $C_1,C_2>0$ are some universal constants.

Plugging in this upper bound for $\Omega_{(i,j)}(2k)$ into \eqref{wickformulagood} and summing over $i,j$, we get an upper bound for the trace moment with a magnitude almost as large as the one claimed in Theorem \ref{mainresultveryhighbounds}. We are now left with the task to show that the contribution from the remaining paths that do not belong to $\Omega_{(i,j)}(2k+2)$, has at most the same magnitude.

Before proceeding with this task, we first show that the above computation works also for non-Gaussian distribution with only a minor modification:

\textbf{Step 3.1: non-Gaussian distribution.}
The same estimate also holds (with a different constant $C_2>0$) when $g_{ij}$ are general sub-Gaussian random variables satisfying assumptions (1),(2),(3) in Theorem \ref{convergencespectral}. Here we follow the ideas of \cite{feldheim2010universality}, Section III.2. Instead of using Wick formula we use the sub-Gaussian moment bound \eqref{gaussianmomentsrealgaussian} which leads to a product of $(\text{const}\cdot k)^k$ terms, and these terms are absorbed in the $s^s$ factor. More precisely, for a constant $C>0$ we consider the quantity
$$
\Omega_{(i,j)}^C(2k):=\sum_{P\in \Omega_{(i,j)}(2k)}C^{k-|E(P)|}
$$ where $|E(P)|$ is the number of distinct edges in $P$ and we are summing over matched paths $P$. Using the sub-Gaussian moment condition and elementary combinatorics, we can find some $C_3>0$ depending on the sub-Gaussian moment in \eqref{gaussianmomentsrealgaussian} such that 
$$
\sum_{P\in\mathcal{P}^1_{(i,j)}(2k+2)}\mathbb{E}[W_r(P)]\leq \Omega_{(i,j)}^{C_3}(2k+2).
$$ To derive this inequality we do not use the Wick formula, but directly evaluate the expectation counting the multiplicity of each entry $g_{ij}$. There is a $(k/c)^k$ factor coming from the sub-Gaussian moment \eqref{gaussianmomentsrealgaussian} but this factor is absorbed in the number of pairings, modulo a constant left out from each repeated edge. The number of repeated edges for a path $P$ of length $k$ is $k-|E(P)|$, hence the computation.

We now let $b$ be the number of edges $\bar{e}$ such that $\bar{w}(\bar{e})=-1$, where we recall $\bar{w}$ is the weight function defined on a diagram of class $\beta=1$ generated from $P$. Then for a path $P$ of length $2k+2,$ $|k+1-E(P)|\leq b$. Therefore 
$$\begin{aligned}
\sum_{P\in\mathcal{P}^1_{(i,j)}(2k+2)}\mathbb{E}[W_r(P)]&\leq \sum_{1\leq s\leq k+1}D_1(s)n^{k-s+1}\sum_{b\geq 0} (C_3)^b\binom{k+3s-2}{3s-2-b}\binom{3s-1}{b}\\&\leq \sum_{1\leq s\leq k+1}C^{s-1}s^sn^{k+2-s}\binom{k+3s-1}{3s-2}\sum_{b\geq 0}\frac{(C_4s/k)^b}{b!}
\end{aligned}$$ 
for some constant $C_4>0$ depending on $C_3$. Since $\sum_{b\geq 0}\frac{(C_4s/k)^b}{b!}=O(1)$, we can bound the other terms as in \eqref{omegaj2s}  and conclude that, for some $C_5>0$ depending only on the sub-Gaussian constant of $g_{ij}$ that 
$$
\sum_{P\in\mathcal{P}^1_{(i,j)}(2k+2)}\mathbb{E}[W_r(P)]\leq C_5(k+1)n^{k+1}\exp(C_5k^{3/2}/n^{1/2}).
$$

\textbf{Step 4: Counting backtracking paths.}
The remaining task is to bound the contribution from paths not counted in \textbf{Step 3}, which by definition are those that are either not backtracking or has two adjacent vertices that are equal. We will manually remove the backtracking parts and parts where adjacent indices are equal, and reduce the count of these paths to a path of shorter length. Recall that we construct, from two paths $P_1,P_2:i\to j$ of length $k$, a path $P$ of length $2k+2$
via first going through $P_1$, then the edge $j\to i$, then $P_2$, then $j\to i$. The path $P$ may be backtracking, but since the entries $g_{ij}$ are i.i.d., for $\mathbb{E}[W_r(P)]\neq 0$, we must have that for each backtrack $p_{i-1}\mapsto p_{i}\mapsto p_{i+1}=p_{i-1}$, unless $p_{i-1}=p_i=p_{i+1}$, we must not match $(p_{i-1},p_i)$ with $(p_{i},p_{i+1})$, but we match these two directed edges with other edges in the path $P$.

\textbf{Reduction to only one type of backtracking.}
There are two possible cases where this backtrack will be matched with other elements of $P$ occurring later. Case (A), we can find some $j>i$ such that there is another backtracking $p_{j-1}\mapsto p_j\mapsto p_{j+1}=p_{j-1}$ and $(p_{i-1},p_i)=(p_{j-1},p_j)$ or $(p_j,p_{j-1})$. This is the most generic case of backtracking. Case (B), we can find some $j\neq k$ with $j,k>i$ such that $(p_{i-1},p_i)=(p_{j-1},p_j)$, $(p_{i},p_{i+1})=(p_{k-1},p_k)$. This case is more exotic, as we cannot simultaneously remove the two edges $p_{i-1}\mapsto p_i\mapsto p_{i+1}$ and $p_{j-1}\mapsto p_j,p_{k-1}\mapsto p_k$ as the remaining path is disconnected, but we can convert case (B) into case (A) as follows. For such a path $P$ of length $2k+2$ in case (B), we identify the two vertices of the edge $p_{j-1}\mapsto p_j$ and collapse this edge to one single vertex, and do the same operation to the edge $p_{k-1}\mapsto p_k$, so we get a unique closed even path $\tilde{P}$ of length $2k-2$, with two backtracks attached to it that are matched to each other, which is case (A).
Therefore, we only need to bound backtracking of class (A), from which we can bound contributions in case (B).

\textbf{Decomposing a backtracking path into forests.} Now we bound contributions from case (A). We adapt the arguments from \cite{feldheim2010universality}, Section III.3. For a closed even path $P$ of length $2k+2$ not belonging to case (B), we can (for some integer $m\in\mathbb{N}$) decompose $P$ as the union of a closed even path $q_{2(k+1-m)}:=\mathfrak{C}(P)$ with no backtracking and no self loops, a forest $f_{2m}=\mathfrak{F}(P)$ and the remaining self-loops. By assumption of the path $P$ not being in case (B), each leaf of $f_{2m}$ should appear at another place on $P$. The contribution from $q_{2(k+1-m)}$ is well-understood in the previous steps, and in the following we study the structure of each connected component of $f_{2m}$ in detail.

From \cite{feldheim2010universality}, Definition III.3.1 we define the notion of a tree diagram (abbreviated as $t$-diagram) to be a rooted planar binary tree. We assign a weight function $\bar{w}$ to this $t$-diagram which takes values in $\{-1,0,1,2,\cdots\}$ on each edge of the tree. By \cite{feldheim2010universality}, Lemma III.3.2 (which follows from elementary combinatorial counting), the cardinality $D^t(\ell)$ of $t$-diagrams with $\ell$ leaves is bounded by $4^\ell$.

As each leaf of $f_{2m}$ should appear somewhere else on $P$, we suppose that, for some $0\leq\ell_1\leq\ell/2$, we have $\ell_1\leq \ell/2$ coinciding pairs of leaves and the remaining $\ell-2\ell_1$ leaves coincide with some other vertices of $P$ which are not leaves.

Then we upper bound the number of ways to form a weighted $t$-diagram $t_{m'}$ for some $m'\in\mathbb{N}$. The total number of trees with $\ell$ leaves is bounded by, using $k\leq n$ and $\ell\leq k$, 
$$
\sum_{0\leq\ell_1\leq\ell/2}\binom{\ell}{2\ell_1}k^{\ell-2\ell_1}n^{2\ell_1}n^{m'-2\ell}\frac{(2\ell_1)!}{2^{\ell_1}\ell_1!}\leq n^{m'-\ell}(C\ell)^{\ell},
$$ where the first binomial coefficient gives the choice of coinciding pairs of leaves; the last binomial factor the number of ways to pair these coinciding leaves; the power of $k$ is the number of ways these leaves coincide with other vertices of $P$, and the two powers of $n$ are the way to choose these vertices from $[n]$. 

This $t$-diagram $t_m'$ with $\ell$ leaves has $2\ell-1$ edges. In order for the summation of weights on the $t$-diagram to not exceed $2m'$, the count of ways to assign the weight on the $t$-diagram is bounded by, for some universal constant $C_6>0$, $$
\binom{2m'+2\ell-1+2\ell-2}{2\ell-2}\leq(C_6m'/\ell)^{2\ell-2}.
$$
Finally we take the summation over $\ell$ as follows: the number of $t$-diagrams $t_{m'}$ is at most
$$
\sum_{\ell\geq 1}(C_6m'/\ell)^{2\ell-2} (C\ell)^{\ell}n^{m'-\ell}\leq n^{m'-1}\exp(C_7(m')^2/n).
$$
A more careful computation as in \textbf{Step 3.1} allows us to upper bound the weight coming from the forest $f_{2m}$, which we omit for simplicity. 

\textbf{Total contribution from forests: summing everything up.}
Now we consider all the connected components of $f_{2m}$. Then the contributions from all $t$-tuples of trees of $m_i$ edges each, with $t\in\mathbb{N}_+$ and $m_1+\cdots+m_t=m$, is upper bounded by 
$$
\sum_{m_1+\cdots+m_t=m}\prod_{j=1}^tn^{m_j-1}\exp(C_7(m_j)^2/n)\leq n^{m-t}\exp(C_7m^2/n)\binom{m-1}{t-1}.
$$Finally, the total contribution is bounded as follows: summing over $t$ first and then over $m$, 
$$\sum_{1\leq t\leq m,t,m\in\mathbb{N}}
n^{m-t}\binom{k+1-m+1}{t}n^{k+1-m}\binom{m-1}{t-1}\exp(C_7m^2/n)\Sigma_1(2(k+1-m))
$$
where $\Sigma_1(2(k+1-m))$ is the contribution from non-backtracking paths computed in \textbf{Step 3.1}.
Note that for each fixed $1\leq m\leq k+1$,
$$
n^{-t}\binom{k-m+2}{t}\binom{m-1}{t-1}\leq (\frac{2km}{n})^t\cdot\frac{1}{t!},
$$ which is summable over $t$. Then we may simply deduce that the total contribution of case (A) is bounded by 
$$ (k+1)^2n^{k+1}\exp(C_8k^2/n+C_8k^{3/2}/n^{1/2}).
$$
Summing over $i,j$, the upper bound has the same magnitude as the one claimed in \eqref{consideredquantity}.

As discussed previously, the contribution from case (B) can be bounded by that from case (A), up to a change of constant, say, for example, replacing $(C\ell)^\ell$ by $(2C\ell)^\ell$. The remaining computations follow the same line.

\textbf{Self-loops.}
Finally, when there is a self-loop, then each self-loop must also appear an even number of times for us to get a non-zero expectation. Say, the self-loop repeats $2k$ times, then we remove the self-loops, resulting in a path with no self-loops. This leads to a loss of $n^k$ free vertices to choose form for this path. The subgaussian moment generated from the self loop is $(Ck)^k$. This leads to a factor $(Ck/n)^k$ to the overall contribution. It suffices to count the multiplicity of all self-loops and remove these loops.

\end{proof}

\begin{proof}[\proofname\ of Theorem \ref{mainresultveryhighbounds}, the complex case $\beta=2$]. We essentially follow the proof in the real case, but take a different method to form a path $P$ from $P_1,P_2$ in \textbf{Step 1}. Suppose we are given two paths $P_1,P_2:i\mapsto j$ of length $k$ each, then we form a path $P$ of length $2k+12$ via the following procedure. Select four distinct vertices $T_1,T_2,T_3,T_4\in[n]$, we follow the path $P_1$ until it gets to $j$, then follow $j\mapsto T_1\mapsto T_2\mapsto j\mapsto T_3\mapsto T_4\mapsto j\mapsto T_2\mapsto T_1\mapsto j\mapsto T_4\mapsto T_3\mapsto j$, and then follow path $P_2$ reversed in order, from $j$ back to $i$. This defines a unique map of paths: $(P_1,P_2)\mapsto P$. The matching on $P$ is defined such that $(u,v)$ is matched only to $(v,u)$ but not $(u,v)$ itself: this is consistent with the definition of $\beta=2$ matching in \cite{feldheim2010universality} and consistent with the assumption $\mathbb{E}[g_{ij}^2]=0$. We further require that the $k+1$ to $k+12$-th steps of $P$ are (uniquely) matched to itself: these are the steps we just appended to the original path. Then by definition, $P$ has no backtracking in the $k+1$-th to $k+12$-th steps, and by independence of entries of $g_{ij}$, for any possible backtracking of $P$ of the form $p_{i-1}\mapsto p_i\mapsto p_{i+1}=p_{i-1}$, we cannot match $(p_{i-1},p_i)$ with $(p_i,p_{i+1})$ (unless $p_{i-1}=p_i=p_{i+1}$) so they both have to be matched with other paths in $P$.

Then we are back to almost the same setting as in the real case $\beta=1$, only that we have a path of length $2k+12$ and we use the $\beta=2$ automaton in \cite{feldheim2010universality}. We just need to enumerate all closed even paths $P$ of length $2k+12$, matched in the $\beta=2$ type (so that for any two $u,v\in[n]$, the edge $(u,v)$ is traveled in $P$ the same number of times as the edge $(v,u)$), and such that all backtracking parts in $P$ are not matched to itself.
The rest of the proof is the same and omitted. 

\end{proof}

\section{Convergence and small deviation: the inhomogeneous case}\label{section3} 

\subsection{Convergence of spectral radius under optimal sparsity}

In this section we prove Theorem \ref{convergencespectral}.
The proof uses in a crucial way the moment computations in Theorem \ref{mainresultveryhighbounds} and an idea of comparison, dating back to \cite{bandeira2016sharp}, Proposition 2.1.

\begin{proof}[\proofname\ of Theorem \ref{convergencespectral}, the real case $\beta=1$]
For this proof we need some notations of paths and cycles from \cite{bandeira2016sharp}. We essentially follow verbatim the steps in \cite{bandeira2016sharp} although certain constructions are slightly different: in particular the meaning of $n_i(\mathbf{u})$ and $\mathcal{S}_{2p}$ in the following are different from those in \cite{bandeira2016sharp}.

Given a path $\mathbf{u}=(u_1,u_2,\cdots,u_{2p})\in[n]^{2p}$ of length $2p$, we identify it with a cycle $\mathbf{u}=(u_1,\cdots,u_{2p},u_1)\in[n]^{2p+1}$. We denote by $\mathcal{S}(\mathbf{u})$ the shape of $\mathbf{u}$ obtained by reordering the vertices via the order of first appearance. (For example, the cycle $7\mapsto 3\mapsto 5\mapsto 4\mapsto 3\mapsto 5\mapsto 4\mapsto 3\mapsto 7$ has its shape $1\mapsto 2\mapsto 3\mapsto 4\mapsto 2\mapsto 3\mapsto 4\mapsto 2\mapsto 1$). We say $\mathbf{u}$ is a \textbf{pair-admissible even cycle} if the following two statements (1) and (2) hold: 

(1) for each directed edge $(u_i\mapsto u_{i+1})$, $1\leq i\leq p$,  
\begin{equation}\label{536win}
\#| j\leq p:(u_j, u_{j+1})=(u_i,u_{i+1})|=\#|j\geq p+1:(u_{j+1},u_{j})=(u_i,u_{i+1})|\mod 2,
\end{equation} we then say the edge $u_i\mapsto u_{i+1}$ is equivalent to the edges in these two sets (edges ($u_j,u_{j+1})$ in the first set and $(u_{j+1},u_j)$ in the second), and we define the \textbf{multiplicity} of $(u_i\mapsto u_{i+1})$ to be the sum of the cardinality of the two sets in \eqref{536win}.

(2) And for each directed edge $(u_i\mapsto u_{i+1})$, $p+1\leq i\leq 2p$, 
\begin{equation}\label{536lose}
\#| j\leq p:(u_{j+1}, u_{j})=(u_i,u_{i+1})|=\#|j\geq p+1:(u_j,u_{j+1})=(u_i,u_{i+1})|\mod 2,
\end{equation} we then say the edge $u_i\mapsto u_{i+1}$ is equivalent to the edges in these two sets (edges ($u_{j+1},u_{j})$ in the first set and $(u_{j},u_{j+1})$ in the second), and we define the \textbf{multiplicity} of $(u_i\mapsto u_{i+1})$ to be the sum of the cardinality of the two sets in \eqref{536lose}. 

(In the above, by $(a,b)=(c,d)$ we mean $a=c,b=d$, and we set $u_{2p+1}=u_1$). In other words, we require that $\mathbf{u}$ can be paired according to the independent entries of $A$ taking into account the direction of edges, and we have reversed the order of edges in the second half of $\mathbf{u}$.
This definition is designed for the non-Hermitian case, while in the symmetric case we only need $\mathbf{u}$ be an even cycle without recording the direction an edge is visited \cite{bandeira2016sharp}.

Define $$\mathcal{S}_{2p}:=\{\mathcal{S}(u):u\text{ is a pair-admissible even cycle of length }2p\}.$$
Let $n_i(\mathbf{u})$ be the number of directed edges with multiplicity $i$ in the path $\mathbf{u}$, then $n_i(\mathbf{u})=n_i(\mathcal{S}(\mathbf{u}))$.

We denote, for each initial vertex $u\in[n]$ and each shape $\mathbf{s}\in\mathcal{S}_{2p}$, the following set of collections of path with shape $\mathbf{s}$ and initial vertex $u$: 
$$\Gamma_{\mathbf{s},u}:=\{\mathbf{u}\in[n]^{2p}:\mathcal{S}(\mathbf{u})=\mathbf{s},u_1=u\}.$$
As we compute the trace $\operatorname{Tr}(A^p(A^*)^p),$ we are summing over paths $P_1,P_2:i\mapsto j$, and we merge $P_1,P_2$ into a cycle as follows: we first start from $i$ and go through $P_1$, then we go through $P_2$ in the reversed direction ($j\mapsto i$).
Then we have 
\begin{equation}\label{tracemoment1s}
\mathbb{E}[\operatorname{Tr}(A^p(A^*)^p)]=\sum_{u\in[n]}\sum_{\mathbf{s}\in \mathcal{S}_{2p}}
\prod_{i\geq 1}\mathbb{E}[g^i]^{n_i(\mathbf{s})}\sum_{\mathbf{u}\in\Gamma_{\mathbf{s},u}}b_{u_1u_2}\cdots b_{u_{p-1}u_p}b^*_{u_pu_{p+1}}\cdots b^*_{u_{2p}u_{1}},
\end{equation} where we use the notation $(b^*)_{ij}=b_{ji}$ for all $i,j\in[n]$. Here $g$ is a copy of the i.i.d. entries $g_{ij}$ given in Theorem \ref{convergencespectral}. As we are in the real case $\beta=1,$ we have $g=\overline{g}.$

For a shape $\mathbf{s}=(s_1,\cdots,s_{2p})$, denote by $m(\mathbf{s})=\max_i s_i$ the total number of distinct vertices in it. (As the shape is independent of the labeling, we shall set $s_1=1$, and the next visit to a new vertex will have label 2, a further visit to a new vertex has label 3, etc. so $\max_i s_i$ is the total number of distinct vertices). Let $i_k:=\inf\{j\geq 1:s_j=k\}$ for all $1\leq k\leq m(\mathbf{s})$, so $i_k$ is the time when the $k$-th new vertex is first visited along the path. As the cycles are even, $(u_{i_{k-1}},u_{i_k})$ should be visited at least twice along the whole path, but $(u_{i_{k}-1},u_{i_k})$ should also be distinct from $(u_{i_\ell-1},u_{i_\ell})_{\ell<k}$ by definition of $i_k$.

In the following we \textbf{take the normalization $\sigma_*=1$}, so we have an upper bound of the right hand side of \eqref{tracemoment1s} by
$$
\sum_{\mathbf{u}\in\Gamma_{\mathbf{s},u}}b_{u_1u_2}\cdots b^*_{u_{2p}u_1}\leq \sum_{v_2\neq\cdots\neq v_{m(\mathbf{s})}\in[n]}b_{v_{s_{i_2}-1}v_2}^2b_{v_{s_{i_3}-1}v_3}^2\cdots (b^\cdot)^2_{v_{s_{i_{m(\mathbf{s})}-1}}v_{m(\mathbf{s})}}\leq \sigma^{2(m(\mathbf{s})-1)},
$$ where we denote by $(b^\cdot)_{u_iu_{i+1}}=b_{u_iu_{i+1}}$ if $i\leq p$ and  $(b^\cdot)_{u_iu_{i+1}}=b_{u_{i+1}u_{i}}$ if $i> p$, and where $v_{s_{i_2}-1}=u$. That is, we have proven
\begin{equation}\label{gaussiana}
\mathbb{E}[\operatorname{Tr}(A^p(A^*)^p)]\leq n\sum_{\mathbf{s}\in\mathcal{S}_{2p}}\sigma^{2(m(\mathbf{s})-1))}
\prod_{i\geq 1}\mathbb{E}[g^i]^{n_i(\mathbf{s})}.
\end{equation}

The case of a homogeneous variance can be easily computed. Let $Y_r$ be an $r\times r$ square matrix with i.i.d. entries following the same law as $g_{ij}$, then for any $r>p$,

\begin{equation}\label{gaussianb}
\mathbb{E}\operatorname{Tr}[Y_r^{p}(Y_r^*)^p]=r\sum_{\mathbf{s}\in \mathcal{S}_{2p}}(r-1)\cdots (r-m(\mathbf{s})+1)\prod_{i\geq 1}\mathbb{E}[g^i]^{n_i(\mathbf{s})}.
\end{equation}
This computation follows as the cycle $\mathbf{u}$ with a given shape $\mathbf{s(u)}$ is determined solely by its $m(\mathbf{s})$ free vertices.

Taking $r=\lceil \sigma^2\rceil+p$, from the elementary inequality and $p\geq m(\mathbf{s})$,
$$
(r-1)(r-2)\cdots (r-m(\mathbf{s})+1)\geq \sigma^{2(m(\mathbf{s})-1)}.
$$

Comparing \eqref{gaussiana} and \eqref{gaussianb} and substituting the value $r=\lceil\sigma^2+p\rceil$ into \eqref{gaussianb}, we immediately deduce that, by \eqref{gaussiana} and \eqref{gaussianb},
$$
\mathbb{E}\left[\operatorname{Tr}(A^p(A^*)^p)\right]\leq \frac{n}{\lceil \sigma^2\rceil+p}\mathbb{E}\operatorname{Tr}\left[Y_{\lceil \sigma^2\rceil+p}^p (Y_{\lceil \sigma^2\rceil+p}^*)^p\right].
$$
    Now we complete the proof of Theorem \ref{convergencespectral}. Applying Theorem \ref{mainresultveryhighbounds}, we get 
    $$\begin{aligned}
\mathbb{E}[\rho(A)]&\leq \mathbb{E}[\operatorname{Tr}|A^p(A^*)^p\|]^\frac{1}{2p}\leq n^\frac{1}{2p}\mathbb{E}\operatorname{Tr}\left[Y_{\lceil \sigma^2\rceil+p}^p (Y_{\lceil \sigma^2\rceil+p}^*)^p\right]^\frac{1}{2p}
\\&\leq n^\frac{1}{2p} (\lceil \sigma^2+p\rceil)^\frac{6}{2p}\exp(C\frac{p^{1/2}}{(\lceil\sigma^2\rceil+p)^{1/2}})(\lceil \sigma^2+p\rceil)^\frac{1}{2}.\end{aligned}$$
Now we take $p=\alpha\log n$ for some $\alpha>0$. By our assumption $\sigma^2/\sigma_*^2\gg \log n,$ and the normalization $\sigma_*=1$, we have $\sigma^2+p\leq n+p$  and 
$$
\mathbb{E}[\rho(A)]\leq (C(n+p))^{\frac{7}{2\alpha\log n}}p^\frac{2}{p}(\sigma+2\sqrt{\alpha\log n)}\exp(C\frac{\sqrt{\alpha\log n}}{\sqrt{\sigma^2+\alpha\log n}}). 
$$That is, for any $\epsilon>0$, we choose $\alpha$ such that $(2Cn)^\frac{4}{\alpha\log n}\leq \sqrt{1+\epsilon}$, $p^\frac{2}{p}\leq\sqrt{1+\epsilon}$ and get
$$
\mathbb{E}[\rho(A)]
\leq (1+\epsilon)(\sigma+\frac{6\sigma_*}{\sqrt{\log(1+\epsilon})}\sqrt{\log n})\exp\left(C\frac{\sqrt{6\frac{\log n}{\log(1+\epsilon)}}}{\sqrt{(\frac{\sigma}{\sigma_*})^2+6\frac{\log n}{\log(1+\epsilon)}}}\right).
$$

\textbf{Absence of outliers}
We can finally derive the tail bounds for $\rho(A)$ via Markov's inequality. First suppose that $\frac{\sigma}{\sigma_*}\gg\sqrt{\log n}$, and we again normalize by setting $\sigma_*=1$, then for any $\delta>0$,
$$\begin{aligned}
\mathbb{P}(\rho(A)\geq (1+\delta)\sigma)&\leq\mathbb{P}(\operatorname{Tr}(A^p(A^*)^p)\geq (1+\delta)^{2p}\sigma^{2p})\\&\leq Cn^6(1+\delta)^{-2p}\sigma^{-2p}(\sigma^2+p)^{p+2}\exp\left(C\frac{p^{1/2}}{(\lceil\sigma^2\rceil+p)^{1/2}}\right)^p 
.\end{aligned}$$By our choice $p=\alpha\log n$ and the assumption $\sigma^2\gg\log n$, it is easy to see the right hand side is summable in $n$. Hence by Borel-Cantelli lemma we have almost surely  $\rho(A)\leq(1+\delta)\sigma$ for all $n$ sufficiently large.

In the regime where $C_1\sqrt{\log n}\leq \frac{\sigma}{\sigma_*}\leq C_2\sqrt{\log n}$, again using Markov's inequality we can show there exists a constant ${D}=D(C_1,C_2)>0$ depending on $C_1,C_2$ such that 
$$\mathbb{P}(\lim\sup_{n\to\infty}\rho(A)\geq D(\sigma+\sqrt{\log n}
\sigma_*))=0.$$This statement can however be alternatively deduced from the elementary fact that $\rho(A)\leq\|A\|$ and the non-asymptotic bound on $\|A\|$ proved in \cite{bandeira2016sharp}.

\end{proof}

\begin{proof}[\proofname\ of Theorem \ref{convergencespectral}, the complex case $\beta=2$]The proof is essentially identical to the real $\beta=1$ case, but the extra assumption that $g_{ij}\sim g_{ij}\mathcal{U}$ for $\mathcal{U}$ the uniform distribution on the unit circle ensures that if a term $\prod_{i\in 2\mathbb{N}_+}\mathbb{E}[(gg^*)^{i/2}]^{n_i(\mathbf{s})}$ is nonzero (since $n_i(\mathbf{s})$ must be even in the complex case to get a nonzero expectation), then it must be a positive real number so the comparison procedure in the proof of real $\beta=1$ case still applies here. The details are omitted.
\end{proof}

\subsection{Small deviation bounds via long-time control}

In this section we prove Theorem \ref{Theorem1.6a12}. The main idea is to closely follow the proof of Theorem \ref{momenttheorem2.2}, \ref{nosymmetricdistribution}, but replace the coefficients $\frac{1}{\sqrt{n}}$ there by coefficients $b_{ij}$. The magnitude of $\sigma_*$ determines the largest range of $p$ where moment estimates are valid.

We first prove Theorem \ref{Theorem1.6a12} when the distribution of $x_{ij}$ are symmetric.
\begin{proof}[\proofname\ of Theorem \ref{Theorem1.6a12}, symmetric distribution]
 We use frequently the notations in the proof of Theorem \ref{momenttheorem2.2}. We compute the high trace moments $\mathbb{E}[\operatorname{Tr}[A^p(A^*)^p]$. For a typical term 
\begin{equation}\label{typicalexpansion1}
x_{i_0i_i}b_{i_0i_1}x_{i_1i_2}b_{i_1i_2}\cdots \bar{x}_{i_{p+1}i_p}b_{i_{p+1}i_p}
\cdots \bar{x}_{i_0i_{2p-1}}b_{i_0i_{2p-1}}
\end{equation} in the expansion, we claim that the leading contribution comes from vertices with distinct $i_0,\cdots,i_p$. Let $Z_1(p)$ be the sum of terms in $\operatorname{Tr}[A^p(A^*)^p]$ of the form \eqref{typicalexpansion1} such that $\operatorname{Card}|i_0,i_1,\cdots,i_p|=p+1$. We estimate its contribution by 
\begin{equation}\label{whatthezip}\begin{aligned}
\mathbb{E}[Z_1(p)]&\leq 
\sum_{i_0,\cdots,i_p} b_{i_0i_1}^2\cdots b_{i_{p}i_{p+1}}^2\\&
\leq \sum_{i_0=1}^n\sum_{i_1,\cdots,i_{p-1}}b^2_{i_0i_1}\cdots b^2_{i_{p-2}i_{p-1}}\sum_{i_p}b^2_{i_{p-1}i_p}
\leq \cdots\leq Cn\sigma^{2p},\end{aligned}\end{equation} where we first fix $i_0,\cdots,i_{p-1}$ and sum over $i_p$, then fix $i_0,\cdots,i_{p-2}$ and sum over $i_{p-1}$, etc. We
use that each $x_{i_sj_s}$ must appear exactly twice in the expansion, so that $\{i_1,\cdots,i_{p-1}\}=\{i_{p+1},\cdots,i_{2p-1}\}$ and that the entries $b_{ij}$ need to be squared, and we finally use that $S$ is long-time controlled by $\sigma$, see Definition \ref{longtimeshorttime}.

Then we show the remaining terms, summed together, is also $O(n\sigma^{2p})$ provided $p$ is not too large. We follow the main steps of the proof of Theorem \ref{momenttheorem2.2},

As in Step 1 of the proof of Theorem \ref{momenttheorem2.2}, we consider a sum of terms \eqref{typicalexpansion1} over all possible $p$ indices $i_1,\cdots,i_p$ from $[n]$ such that $n_1$ of these indices appear once, $n_2$ of them appear twice, and $n_k$ of them appear $k$ times. An ordering of their appearance in $[p]$ is also determined. Then as in Step 2 and Step 3 of the proof of Theorem \ref{momenttheorem2.2}, we construct the whole path $P$ from $i_1,\cdots,i_p$. Assume that such a configuration has been fixed (the vertices are not yet chosen), and we first consider the product of coefficients along the path.
\begin{equation}\label{sumproductpaths}b_{i_0i_1}b_{i_1i_2}\cdots b_{i_{p-1}i_p}b_{i_{p+1}i_p}\cdots b_{i_0i_{2p-1}}.\end{equation} By symmetry of entry distribution, to have a nonzero expectation, each $b_{ij}$ should appear an even number of times in this product. Since there are $2(\sum_{k\geq 2}kn_k)$ vertices appearing four times or more and $n_1$ vertices appearing twice, we can 
decompose the product \eqref{sumproductpaths} into the product of entries of $S$, along $J+1\geq 2$ (possibly disconnected) paths $P^0,P^1,\cdots,P^J$, with $J=\sum_{k\geq 2}(k-1)n_k$.

\textbf{The method of summing over coefficients.} The precise method to decompose the product \eqref{sumproductpaths} is as follows. Recall the notation $\mathcal{P}_k$ introduced in the proof of Theorem \ref{momenttheorem2.2}. For a given path $P$ and each $k$ we can alternatively understand the multiset $\mathcal{P}_k$ associated to $P$ as the collection of vertices in the path $P$ where the path visits $2k$ times. Also, each point in $\mathcal{P}_k$ is repeated with multiplicity $k$ in the multiset $\mathcal{P}_k$. (Note: for a path $i_0,\cdots,i_p,\cdots,i_{2p-1}$ we regard the middle value $i_p$ as visited twice).

 Starting from $i_0$ and walk through the path, at the first instance when we arrive at some index $i_{t_1}=x\in\cup_{k\geq 2}\mathcal{P}_k$ the second time, then we stop the path $P^1$ at $x$ with $x$ the last index of $P^1$. (If no such $t_1<p$ exists then we stop $P^1$ at $i_p$ and finish the construction: by the pairing rule of the random matrix this means all the vertices appear only twice and $P$ has no self-intersection). Then starting from $i_{t_1}$, if the edge $i_{t_1}\mapsto i_{t_1+1}$ has been visited an odd number of times before $i_{t_1}$ (so this visit is the 2,4,6, etc. th visit), then we skip this edge $i_{t_1}\mapsto i_{t_1+1}$ and move on to $i_{t_1+1}$. Otherwise, if this edge is visited an even number of times before, then we start $P^2$ at $i_{t_1}$, and proceed until we either (a) meet a new vertex in $\cup_{k\geq 2}\mathcal{P}_k$ for at least the second time counted by the visit of the original path, or (b) enter an edge $i_{t+1+r-1}\mapsto i_{t_1+r}$ that has been visited an odd number of times before, so this visit is the 2,4,6, etc. th visit. In case (a) we take this new vertex as the end of $P^2$ and in case (b) we take $i_{t_1+r-1}$ as the end of $P^2$.
 Inductively, suppose $P^r$ has been constructed with endpoint $i_{t_r}$, we find $s\in\mathbb{N}_+$ which is the smallest such that $i_{t_r+s-1}\mapsto i_{t_r+s}$ is visited an even number of times before $i_{t_r+s}$, and start $P^{r+1}$ at this index $i_{t_r+s-1}$ Proceed with $P^{r+1}$ all the way until we either meet a vertex in $\cup_{k\geq 2}\mathcal{P}_k$ (and visited at least once before in $P$) or meet a directed edge where this visit is the even number of time in $P$. Then stop $P^{r+1}$ at this vertex and proceed to define $P^{r+2}$. As an important note, for the second half of the path we are reversing the direction of edges as required by the non-Hermitian product rule, so the direction of the edge is $i_{p+s+1}\mapsto i_{p_s}$ for any $s\geq 0$.

For example, consider the path $7\mapsto 3\mapsto 5\mapsto 4\mapsto 3\mapsto 6\mapsto 3\mapsto 4\mapsto 5\mapsto 3\mapsto 7$ with a self intersection, then the path $P^1$ is $7\mapsto 3\mapsto 5\mapsto 4\mapsto 3$ and $P^2$ is $3\mapsto 6$ with the initial vertex of $P^2$ attached to $P^1$ (The second half of the edge repeats the first half, so they are not recorded). Consider a more complicated path $7\mapsto 3\mapsto 5\mapsto 4\mapsto 8\mapsto 5\mapsto 4\mapsto 8\mapsto 5\mapsto 6\mapsto 9\mapsto 6\mapsto 5\mapsto b\mapsto c\mapsto 5\mapsto b\mapsto c\mapsto 5\mapsto 3\mapsto 7$ (for some new integers $b,c$ unequal to $0,\cdots,9$) which has repeated loops, then $P^1$ is $7\mapsto 3\mapsto 5\mapsto 4\mapsto 8\mapsto 5$, and $P^2$ is $5\mapsto 6\mapsto 9$ and $P^3$ is $5\mapsto b\mapsto c\mapsto 5$. As a simple rule, suppose an index appears $x$ times in the path, then it will be the starting point of $x/2-1$ paths $P^i$ for $i\geq 2$. In the second example, 5 appears 6 times in the path and is the starting point of $P^2,P^3$.

By definition, we can prove that the starting point of each $P^\ell,\ell\geq 2$ has a one to one correspondence with a point in $\cup_{k\geq 2}\mathcal{P}_k$ in the path with one point removed from each multiset (this is because we do not consider the first visit, and do not consider the time when the edge is visited the even number of times, so we subtract by one here), and thus from a given path with fixed $n_1,n_2,\cdots$, this algorithm yields $J+1$  paths $P^1,\cdots, P^{J+1} $ where $J=\sum_{k\geq 2}(k-1)n_k$. The construction of $P^1,P^2,\cdots$ from the original path is unique. The endpoint of each $P^1,\cdots, P^J$ is fixed a-priori to repeat some earlier indices of the path, and the combinatorial factor of repetition is already determined in the $\frac{p!}{\prod_{k\geq 1}(k!)^{n_k}}$ factor of determining the exact order of appearance of an odd index. The final path $P^{J+1}$ has a fixed starting point, but the endpoint may or may not be fixed by a point in $\cup_{k\geq 2}\mathcal{P}_k$, see the above two examples. In this construction we only consider the odd visits of an directed edge, and these are marked by the odd indices and recorded in $\cup_{k\geq 1}\mathcal{P}_k$. The even visits (incidence of visiting an edge at an even number of time) are not considered in constructing $P^1,\cdots,P^{J+1}$ but we will compute the square of $b_{ij}$, i.e., $b_{ij}^2$, to account for even visits.

As in Theorem \ref{momenttheorem2.2} there are multiple choices of return when we visit an index $\cup_{k\geq 2}\mathcal{P}_k$ at an even instant, and this factor is absorbed in the combinatorial factor $(\text{const}\cdot k)^{kn_k}$. Finally, we have a factor $(\text{const}\cdot k)^{kn_k}$ to account for sub-Gaussian moments, or a factor $(\text{const}\cdot k)^{2kn_k}$ to account for sub-exponential moments but we can divide a factor $k!$ due to identical visits of an edge $k$ times, as in 
the proof of Theorem \ref{momenttheorem2.2}. 

We assume that each path $P^r$ has length $m_r\in\mathbb{N}$, and we denote by its vertices by $P^r=(P_0^r,P_1^r,\cdots, P_{m_r-1}^r,P_{m_r}^r)$. Since we keep the odd instance of the visit of each edge, we can check $\sum_{k\geq 1}m_k=p$.

 The sum over $P^r$ of coefficients $b_{ij}^2$ can be bounded very easily. Suppose its initial vertex is fixed to be $P_0^r$. Then we consider the contribution from summation of the coefficients $b_{ij}$, which is defined as the following expression for any $1\leq r\leq J$:  
\begin{equation}\label{line925fans}\begin{aligned}
&\sum_{P_1^r,\cdots,P_{m_r-1}^r\in[n],\quad P_{m_r}^r\text{ is fixed }}b^2_{P_0^rP_1^r}\cdots b^2_{P_{m_r-1}^rP_{m_r}^r}
\\&\leq (\sigma_*)^2\sum_{P_1^r,\cdots,P_{m_r-1}^r\in [n]}
b^2_{P_0^rP_1^r}\cdots b^2_{P_{m_r-2}^rP_{m_r-1}^r}
\leq C\sigma^{2m_r-2}(\sigma_*)^{2},\end{aligned}\end{equation}
where we simply bound $b^2_{P_{m_r-1}^rP_{m_r}^r}\leq(\sigma_*)^2$ and sum over other free indices $P_1^r,\cdots,P_{m_r-1}^r$. In the case $r=J+1$ we also bound, whether $P_{m_{J+1}}^{J+1}$ is fixed or not,
$$
\sum_{P_1^{J+1},\cdots,P_{m_{J+1}}^{J+1}\in[n]}b^2_{P_0^{J+1}P_1^{J+1}}\cdots b^2_{P_{m_{J+1}-1}^{J+1}P_{m_{J+1}}^{J+1}}
\leq C\sigma^{2m_{J+1}}.$$

\textbf{Comparison with the homogeneous model.}
Let $G$ be the (homogeneous variance profile) random matrix where we set $b_{ij}^2=\frac{1}{n}$ in the definition of $A$, we now wish to compare the computation of $\mathbb{E}[\operatorname{Tr}G^p(G^*)^p]$ to $\mathbb{E}[\operatorname{Tr}A^p(A^*)^p]$. This idea of comparison is similar to the idea in the proof of Theorem \ref{convergencespectral}, but with an essential difference: in this proof we wish to do the comparison in a pathwise manner, comparing the contribution to moments for each possible shape of the path, whereas Theorem \ref{convergencespectral} is essentially based on a global comparison of moments. We still use the notion of shape from the proof of Theorem \ref{convergencespectral}: for a path $\mathbf{u}=(u_1,\cdots,u_{2p})\in[n]^{2p}$ we denote by $\mathcal{S}(\mathbf{u})$ its shape. Then one can check that the shape of $\mathbf{u}$ is determined by both the numbers $n_1,\cdots,n_k$ and by the choice of sequence of appearance (the combinatorial factor $\frac{p!}{\prod_{k=2}^p (k!)^{n_k}}$). For each path $\mathbf{u}$, we can check that the expectation $\mathbb{E}[x_{u_1u_2}\cdots x_{u_{p}u_{p+1}}\bar{x}_{u_{p+2}u_{p+1}\cdots\bar{x}_{u_1u_{2p}}}]$ depends only on the shape $\mathcal{S}(\mathbf{u})$ of $\mathbf{u}$ and not on the specific choice of path, so the computation \eqref{line925fans}, after taking the product over $1\leq r\leq J+1$, gives the upper bound for the contribution of the inhomogeneous coefficients $b_{ij}$ over the summation of moments
to the homogeneous model. That is, for any fixed shape $\mathbf{s}\in \mathcal{S}_{2p}$, we have
$$\begin{aligned}&\sum_{\mathbf{u}:\mathcal{S}(\mathbf{u})=\mathbf{s}}\mathbb{E}[b_{u_1u_2}x_{u_1u_2}\cdots b_{u_{p}u_{p+1}}x_{u_{p}u_{p+1}}{b}_{u_{p+2}u_{p+1}}\bar{x}_{u_{p+2}u_{p+1}\cdots{b}_{u_1u_{2p}}\bar{x}_{u_1u_{2p}}}]\\&\leq C^{J+1}\sigma^{2p-2J}(\sigma_*)^{2J}\sum_{\mathbf{u}:\mathcal{S}(\mathbf{u})=\mathbf{s}}\mathbb{E}[x_{u_1u_2}\cdots x_{u_{p}u_{p+1}}\bar{x}_{u_{p+2}u_{p+1}\cdots\bar{x}_{u_1u_{2p}}}]\end{aligned}$$ where $J=\sum_{k\geq 2}(k-1)n_k$ is uniquely determined by $\mathbf{s}$.

\textbf{The final bound.} Now we can take the summation over all the shapes $\mathbf{s}\in\mathcal{S}_{2p}$ and upper bound in a similar way as in the case of equation \eqref{contributiongeneral}. The contribution to the expectation $\mathbb{E}[\operatorname{Tr}(A^p(A^*)^p)]$ by the summation of all the paths with specified $n_0,n_1,\cdots,n_p$ is upper bounded by 
\begin{equation}\label{***1234}
n\sigma^{2p}(\frac{\sigma_*}{\sigma})^{2\sum_{k\geq 2}(k-1)n_k}\frac{p!}{\prod_{k=2}^p(k!)^{n_k}}\prod_{k=2}^p(2k)^{kn_k}\prod_{k=2}^p(\text{const}\cdot k)^{kn_k} \cdot 4^{\sum_{k\geq 2}kn_k}.
\end{equation}
Summing the expression over all $0<\sum_{k\geq 2}kn_k$, the result is bounded from above by
$$
n\sigma^{2p}\left(\exp\left(\sum_{k\geq 2}\frac{(\text{const}\cdot k\cdot p)^k}{(\sigma^2/\sigma_*^2)^{k-1}}\right)-1\right)
$$ which is $O(1)n\sigma^{2p}$ whenever $p\leq C\sqrt{\sigma^2/\sigma_*^2}$ for some sufficiently small $C>0$.

Now we can conclude the proof of Theorem \ref{Theorem1.6a12} via the method of moments. Recalling that $\sigma=O(1)$, we have, for any $t>0$,
$$\begin{aligned}
\mathbb{P}(\rho(A)\geq \sigma(1+t
\sigma_*))&\leq \mathbb{P}(\operatorname{Tr}(A^p(A^*)^p) \geq \sigma^{2p}(1+t\sigma_*)^{2p})\\&\leq C_0n(1+t\sigma_*)^{-2p}\leq C_0ne^{-C\sigma t}
\end{aligned}$$
where we take $p=\lfloor\frac{C\sigma}
{\sigma_*}\rfloor$ and $C>0$ is a sufficiently small constant defined in the previous paragraph, depending on $\sigma$ and the sub-exponential distribution of $x_{ij}$. The constant $C_0>0$ is another fixed constant depending only on $\sigma$ and the sub-exponential tail \eqref{gaussianmoments} of $x_{ij}$. 
\end{proof}

The case of nonsymmetrically distributed entries can be handled in exactly the same way, where we modify the proof of Theorem \ref{nosymmetricdistribution} in Appendix \ref{appendixC}.

\begin{proof}[\proofname\ of Theorem \ref{Theorem1.6a12}, non-symmetric distribution]
Checking the proof of Theorem \ref{nosymmetricdistribution} in Appendix \ref{appendixC}, we only need to show that contributions of uneven paths (paths with at least one edge traveled an odd number of times) can be bounded by $O(n\sigma^{2p})$. This is easy: since each $b_{ij}$ is bounded by $\sigma_*$, we simply replace each factor $n^{-\frac{1}{2}}$
of the additional edge weight contribution in the proof of Theorem \ref{nosymmetricdistribution} by the factor $\sigma_*$, and choose $p\leq C'\sigma_*^{-1}$ for another fixed constant $C'>0$. The remaining computations are analogous. 

\end{proof}

\subsection{Small deviation bounds via spectral radius}

The proof is very similar to that of Theorem \ref{Theorem1.6a12}, but we take a different method to estimate the coefficient matrix $S$.

\begin{proof}[\proofname\ of Theorem \ref{spectralradiuslargedeviation}]
As in the proof of Theorem \ref{Theorem1.6a12}, we first assume that the entries $x_{ij}$ have a symmetric distribution. 
The idea of the proof is essentially the same as the proof of Theorem \ref{Theorem1.6a12}, but we need to take a different estimate on the variance profile $S$. After a rescaling we may assume for simplicity that $\rho(S)\geq 1$.

From the standard linear algebraic relation $\operatorname{Tr}(S^k)\leq n\rho(S)^k$ we expand 
$$
\sum_{i_0,\cdots,i_k\in[n]}s_{i_0i_1}s_{i_1i_2}\cdots s_{i_{k-1}i_k}s_{i_ki_0}\leq n\rho(S)^{k+1}.
$$ From the assumption $s_{i_ki_0}\geq\frac{c}{n}$, we get an obvious upper bound
\begin{equation}\label{definitionsk} S(k):=
\sum_{i_0,\cdots,i_k\in[n]}s_{i_0i_1}s_{i_1i_2}\cdots s_{i_{k-1}i_k}\leq c^{-1}n^2\rho(S)^{k+1}.
\end{equation}
This bound on $S(k)$ already gives the contribution to $\mathbb{E}[\operatorname{Tr}(A^k(A^*)^k)]$ generated by even closed paths with no self intersection, and the contribution is $O(n^2\rho(S)^{p+1})$ as one may compute similarly to \eqref{whatthezip}.

For general paths with self intersection, we may decompose the path into $J+1$ sub-paths $P^1,\cdots,P^{J+1}$ as in the proof of Theorem \ref{Theorem1.6a12}. Then we need to bound, for any fixed $1\leq r\leq J+1$ and such that the path $P^r$ has length $m_r$, the following summation
\begin{equation}\label{summationidentity2}
\sum_{p_s^r\in[n],1\leq s\leq m_r-1\forall r,P_0^1,p_{m_{J+1}}^{J+1}\in[n]}
\prod_{r=1}^{J+1}
\prod_{s=0}^{m_r-1}b^2_{P_s^rP_{s+1}^r},
\end{equation} (that is, we sum over the possible choices of the vertices of each path $P^j$ with the initial vertex and last vertex removed, and also sum over the first vertex of $P^1$ and the last vertex of $P^{J+1}$), under two constraints: 
\begin{enumerate}\item
We require that the endpoint of $P^r,1\leq r\leq J$ satisfy that $P_{m_r}^r=P_{m_t(s)}^t$
for some fixed $t\leq r$ and for some fixed $m_t(s)\in[0,m_t)\cap\mathbb{N}$. This is because by the way we construct the paths $P^1,\cdots,P^{J+1}$, the endpoint of $P^r$ is either some point in $
\cup_{k\geq 2}\mathcal{P}_k$ which is not the first visit, (and the first visit must be contained in the previously constructed paths), or the endpoint is a vertex of a previously existed directed edge, which also is contained in previously constructed paths.

\item A previously fixed correspondence between the starting point $P^r_0$ of the path $P^r,r=2,\cdots,J+1$ with $\cup_{k\geq 2}(k-1)\mathcal{P}_k$ (which is the multiset spanned by each element in $\mathcal{P}_k$ repeated $k-1$ times). \end{enumerate}
Both constraints are determined by the shape $\mathbf{s}$ of the path.

We will now show that \eqref{summationidentity2} can be bounded by a factor of the form $n^{-J}S(p-J)$, where $S(\cdot)$ was defined in \eqref{definitionsk} , and such that this bound is independent from the specific form of the constraints imposed. To see this, we take an iterative exploration of the paths by first revealing vertices in $P^1$, then vertices in $P^2$, then $P^3$, etc. Note that $P_{{m_{r}-1}}^r$ is a free vertex to be chosen in $[n]$ and $P_{{1}}^{r+1}$ is also a free vertex to choose, but the value of $P_{m_r}^r$ and $P^{r+1}_0$ have already been fixed. By the two-sided bound \eqref{twosidedbounds} on the variance $s_{ij}$, we have 
\begin{equation}\label{howlargeisproduct}
b^2_{P^r_{m_{r}-1},P^r_{m_r}}b^2_{P^{r+1}_0,P^{r+1}_1}\in [\frac{c^2}{Cn},\frac{C^2}{cn}]b^2_{P^r_{m_{r}-1},P^{r+1}_{1}}.
\end{equation}
This reduction removes one constraint previously set, but keeps the number of free vertices unchanged and brings about an additional $n^{-1}$ factor. This procedure can be applied altogether $J$ times, at the instance when we switch from paths $P^j$ to $P^{j+1}$ for each $j$. This leads to a reduction of $J$ edges.
After the reduction we have
\begin{equation}\label{intheformof}
\eqref{summationidentity2}\leq (\frac{\text{const}}{n})^{\sum_{k\geq 2}(k-1)n_k}S(p-J)
.\end{equation}
Therefore in this case the contribution of all terms with specified $n_0,n_1,\cdots,n_p$ to the trace expansion $\mathbb{E}[\operatorname{Tr}(A^p(A^*)^p)]$ is upper bounded by
\begin{equation}\label{radiusspect**1234}
n^2\rho(S)^{p+1}
(\frac{\text{const}}{n})^{\sum_{k\geq 2}(k-1)n_k}\frac{p!}{\prod_{k=2}^p(k!)^{n_k}}\prod_{k=2}^p(2k)^{kn_k}\prod_{k=2}^p(\text{const}\cdot k)^{kn_k} \cdot 4^{\sum_{k\geq 2}kn_k}.
\end{equation}
The summation over all $n_0,\cdots,n_p$ with $\sum_{k\geq 2}kn_k>0$ is bounded by 
$$
n^2\rho(S)^{p+1}
\left(\exp\left(\sum_{k\geq 2}\frac{(\text{const}k\cdot p)^k}{n^{k-1}}\right)-1\right)
$$ which is $O(1)n^2\rho(S)^{p+1}$ whenever $p\leq C_0\sqrt{n}$ for some fixed $C_0>0$ sufficiently small.

Finally, the contribution of terms with $n_0=n-p,n_1=p,n_2=\cdots=0$ is bounded by 
$$
c^{-1}n^2\rho(S)^{p+1}.
$$
 Combining the above, we can finish the proof of the small deviation estimate via a standard application of Markov's inequality, which parallels the argument in the proof of Theorem \ref{Theorem1.6a12} and is thus omitted.

Last we consider the case when $x_{ij}$ does not have a symmetric law. For this we just need to follow the proof of Theorem \ref{nosymmetricdistribution}, where we replace the edge weight from $n^{-1/2}$ to $Cn^{-1/2}$. There is also a change when we are in case (B) or (C) of the proof of Theorem \ref{nosymmetricdistribution}, when the path with an odd edge is generated from at least two even closed paths and some additional odd edges. For the contribution of coefficients from these even closed paths, we may just follow the computation in \eqref{summationidentity2} to stick together the contribution from two disjoint paths into contribution along one single path, as in \eqref{intheformof}, and finally conclude via this estimate.

\end{proof}

\subsection{Suppressed fluctuations: proofs}

This section is devoted to the proof of Theorem \ref{examplegrowthbounds}. The idea is again to compute the trace of high powers of $\mathcal{L}$.

\begin{proof}[\proofname\ of Theorem \ref{examplegrowthbounds}]
   Expanding the trace power of $\mathbb{E}[\operatorname{Tr}(\mathcal{L}^{p_n}(\mathcal{L}^*)^{p_n})]$, thanks to the special structure of $\mathcal{L}$, any monomial in the expansion should be of the form $$
X^1_{i_0i_1}X^2_{i_1i_2}\cdots X^{p_n}_{i_{p_{n}-1}i_{p_n}}\bar{X}^{p_n}_{i_{p_{n}+1}i_{p_n}}\cdots \bar{X}^1_{i_0i_{2p_n-1}},
    $$ where we use the notation $X^i_{j,k}$ to denote the $(j,k)$-th element of the matrix $X^i$.

    Since entries of $\mathcal{L}$ are independent with zero mean, for this term to have nonzero expectation we need each $X^r_{i_{r-1}i_{r}}$ to appear at least twice in the product. But as we consider exactly $p_n$ moments, this guarantees each element must appear exactly twice and therefore $i_{p_n+r}=i_{p_n-r}$ for all $r=1,2,\cdots,p_n-1$.

    Summing over all possibilities of $i_1,\cdots,i_{p_n}$, we get  
    $$
\mathbb{E}[\operatorname{Tr}\left(\mathcal{L}^{P_n}(\mathcal{L}^*)^{p_n}\right)]\leq q_n.
    $$
    Thus by Markov's inequality, whenever $p_n$ is large enough, for any $t>0$
$$\begin{aligned}
\mathbb{P}(\rho(\mathcal{L})\geq 1+\frac{t}{p_n})&\leq (1+\frac{t}{p_n})^{-2p_n}\mathbb{E}[\|\mathcal{L}^{p_n}\|^2]\\&\leq (1+\frac{t}{p_n})^{-2p_n}\mathbb{E}[\operatorname{Tr}(\mathcal{L}^{p_n}(\mathcal{L}^*)^{p_n}\|)]\leq e^{-2t}q_n.
\end{aligned}$$
    This proves claim (2) of Theorem \ref{examplegrowthbounds}. For claim (1), it suffices to take $e^{-2\frac{n}{q_n}}q_n\ll 1$ to ensure $\rho(\mathcal{L})\leq 1+o(1)$ with high probability: this is the same as $q_n\log q_n\ll n$.
\end{proof}

\section{Large deviation of spectral radius}

In this section we prove Theorem \ref{theorem1.5subgaussianconcentration}. Instead of using the moments computation, we will use the free probability framework of \cite{bandeira2023matrix}.
The basic idea is, to show that $\rho(A)\leq 1+t$, we first recall that $\|A\|\leq 4$ with very high probability, then we find small balls $B_i=B(\rho_i,\epsilon_i)$ to cover $\{z\in\mathbb{C}:1+t\leq |z|\leq 4\}$ and show that $\sigma_{min}(A-\rho_i I)>0$ with very high probability for each $i$. Finally we take a union bound over all such balls $B_i$.

Before outlining the proof we recall some free probability notations from \cite{bandeira2023matrix}. Consider a Gaussian random matrix $M\in\mathcal{M}_n(\mathbb{C})$ that can be written in the form 
$$
M=A_0+\sum_{i=1}^m A_ig_i
$$ where $A_0,A_1,\cdots,A_m\in \mathcal{M}_n(\mathbb{C})$ are fixed and $g_1,\cdots,g_m$ are an independent family of standard real Gaussian random variables. The idea is to show that the spectrum of the finite dimensional object $M$ is close to the spectrum of a limiting object, denoted $M_{\text{free}}$, that lives in an infinite dimensional space.

To properly define this limiting object $M_{\text{free}}$, we need to introduce some notations in free probability theory. A comprehensive introduction to free probability can be found in \cite{nica2006lectures}. Consider a unital $C^*$ algebra $(\mathcal{A},\tau)$ and a family of free semicircular elements $s_1,\cdots,s_m$ on $(\mathcal{A},\tau)$, where $(\mathcal{A},\tau)$ is the space where the the infinite dimensional objects lie in, and the semicircular elements $s_i$ can be thought as the limiting objects of GOE/GUE matrices in free probability theory. Then the free probability model $M_{\text{free}}$ of $M$ is defined on the $C^*$ algebra $(\mathcal{A}\otimes \mathcal{M}_n(\mathbb{C}),\tau\otimes\operatorname{tr})$ as 
\begin{equation}\label{line1007}
M_{\text{free}}=A_0\otimes 1+\sum_{i=1}^m A_i\otimes s_i.
\end{equation}Note that the definition of $M_{\text{free}}$ and $M$ applies to non-Hermitian matrices as well. For a random matrix $M$ with general (non-Gaussian) entries, we can consider its Gaussian model $M^{\text{gauss}}$ which is the Gaussian random matrix with the same mean and covariance profile as $M$, where the covariance profile is the following $n^2\times n^2$ matrix 
$$
(\operatorname{Cov}M)_{ij,kl}:=\mathbb{E}[(M-\mathbb{E}M)_{ij}\overline{(M-\mathbb{E}M)_{kl}}],
$$
and define its free probability version $M_{\text{free}}$ as the free model associated with $M^{\text{gauss}}$. While there are several ways to write out $M_{\text{free}}$ in terms of entries of $M$, they are equivalent in the following computations.

For a self-adjoint operator $M$ we denote by $\operatorname{Sp}(M)\subset\mathbb{R}$ the spectrum of $M$.
The main result in \cite{bandeira2023matrix} implies that, under appropriate conditions on the coefficients of $M$, the spectrum $\operatorname{Sp}(M)$ can be well captured by $\operatorname{Sp}(M_{\text{free}})$. We will not use the most general notation of \cite{bandeira2023matrix}  but adapt \cite{bandeira2023matrix}, Theorem 2.1 to the setting of Theorem \ref{theorem1.5subgaussianconcentration}.

\begin{Proposition}\label{proposition4.1first} Let $A$ be a random matrix with Gaussian entries defined in Theorem \ref{theorem1.5subgaussianconcentration}. For any $z\in\mathbb{C},$ we define $$\mathcal{Y}_z:=\begin{pmatrix}0&A+z\mathbf{1}\\A^*+\bar{z}\mathbf{1}&0\end{pmatrix},\quad \mathcal{Y}_{\text{free},z}:=\begin{pmatrix}0&A_{\text{free}}+z\mathbf{1}\\(A^*)_{\text{free}}+\bar{z}\mathbf{1}&0\end{pmatrix}.$$
   Then for some universal constant $C>0$, we have for all $t>0$ and $z\in\mathbb{C}$, 
$$\mathbb{P}\left(\operatorname{Sp}(\mathcal{Y}_z)\subset \operatorname{Sp}(\mathcal{Y}_{\text{free},z})+C\{\sigma_*t+\sqrt{\sigma_*}(\log n)^{\frac{3}{4}}\}[-1,1]\right)\geq 1-e^{-t^2}.$$
\end{Proposition}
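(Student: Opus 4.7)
The plan is to apply Theorem 2.1 of \cite{bandeira2023matrix} (the Bandeira-Boedihardjo-van Handel concentration inequality) directly to the Hermitization $\mathcal{Y}_z$, which is a self-adjoint Gaussian random matrix of size $2n\times 2n$. The initial step is to rewrite $\mathcal{Y}_z$ in the canonical Gaussian matrix model form
$$
\mathcal{Y}_z = B_0(z) + \sum_{(i,j)\in[n]^2} B_{ij}\, g_{ij},
$$
where $B_0(z)$ is the deterministic self-adjoint matrix encoding the shift by $z\mathbf{1}$ in the upper off-diagonal block, each $B_{ij}$ is $b_{ij}$ times the deterministic self-adjoint matrix with $E_{ij}$ in the upper-right block and $E_{ji}$ in the lower-left block, and $(g_{ij})$ is an i.i.d.\ family of standard real Gaussians (in the complex case one decomposes into real and imaginary parts and uses the rotational invariance of the laws of the entries). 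With this presentation, $\mathcal{Y}_{\text{free},z}$ is precisely the free model of $\mathcal{Y}_z$ obtained by replacing the $g_{ij}$ by a free semicircular family, as in \eqref{line1007}.

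The next step is to bound the three free probability parameters from \cite{bandeira2023matrix}, Theorem 2.1, specialized to $\mathcal{Y}_z$. The global parameter $\sigma(\mathcal{Y}_z)$ is bounded by the operator norm of $S$, which under the doubly stochastic assumption equals $1$. The pointwise sparsity parameter $\sigma_*(\mathcal{Y}_z)$ coincides with $\sigma_*(A)=\sup_{ij}b_{ij}$ up to a fixed constant from the block structure. The third and most delicate parameter $v(\mathcal{Y}_z)$, which controls the norm of the covariance operator on the matrix algebra $M_{2n}(\mathbb{C})$, is bounded by $\sigma_*$ via the doubly stochastic structure of $S$. A crucial observation is that all three parameters are independent of $z$, since $z$ enters only through the deterministic term $B_0(z)$.

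Substituting these bounds into Theorem 2.1 of \cite{bandeira2023matrix} yields the one-sided Hausdorff estimate
$$
\mathbb{P}\Bigl(\operatorname{Sp}(\mathcal{Y}_z) \not\subset \operatorname{Sp}(\mathcal{Y}_{\text{free},z}) + C\bigl\{\sqrt{\sigma_*}(\log n)^{3/4} + \sigma_* t\bigr\}[-1,1]\Bigr) \le e^{-t^2},
$$
which is exactly the claim of the proposition. The estimate is automatically uniform in $z$ by the $z$-independence of the parameters.

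The main technical obstacle is establishing the refined bound $v(\mathcal{Y}_z) \lesssim \sigma_*$ rather than the naive pointwise bound $v \le \sigma_*^2$; the latter would degrade the prefactor multiplying $(\log n)^{3/4}$ from $\sqrt{\sigma_*}$ to $\sigma_*$. Extracting the sharper bound requires interpreting the covariance operator as a structured linear map on $M_{2n}(\mathbb{C})$ and exploiting the doubly stochastic property of $S$, viewed as a bound on the $L^2 \to L^2$ norm of variance-weighted operators, to control the norm of this map by $\sigma_*$. Once $v$ is correctly estimated, the remainder of the argument reduces to bookkeeping of the constants that appear in \cite{bandeira2023matrix}, Theorem 2.1.
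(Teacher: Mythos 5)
Your proposal follows the same route as the paper's own (one-line) proof: apply Theorem 2.1 of \cite{bandeira2023matrix} to the self-adjoint Gaussian matrix $\mathcal{Y}_z$, bound the relevant parameters $\sigma_*(\mathcal{Y}_z)\lesssim\sigma_*$, $\sigma(\mathcal{Y}_z)=O(1)$, and the covariance-norm parameter $\lesssim\sigma_*$, observe that none of these depend on $z$ (which enters only through the deterministic shift), and read off the bound. This is exactly what the paper does.

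One correction of attribution, though not of substance: the claim that bounding $v(\mathcal{Y}_z)\lesssim\sigma_*$ is ``the main technical obstacle'' requiring the doubly stochastic structure of $S$ is not right. Since the entries of $A$ are independent, the covariance matrix $\operatorname{Cov}(\mathcal{Y}_z)$ is (block) diagonal and its operator norm is simply $\max_{ij}s_{ij}=\sigma_*^2$, giving $v(\mathcal{Y}_z)=\sigma_*$ for free, with no structural hypothesis on $S$ beyond independence. The doubly stochastic assumption is instead what controls $\sigma(\mathcal{Y}_z)$: one has $\mathbb{E}[\mathcal{Y}_0^2]=\operatorname{diag}(\mathbb{E}AA^*,\mathbb{E}A^*A)$, whose norm equals the maximum of the row and column sums of $S$; these are $\le 1$ precisely because $S$ is doubly stochastic (note that this is the maximum row/column sum, not $\|S\|$ as you wrote, though for doubly stochastic $S$ both are $\le 1$). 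The factor $\sqrt{\sigma_*}$ in the final bound then emerges from the product $v(\mathcal{Y}_z)^{1/2}\sigma(\mathcal{Y}_z)^{1/2}\lesssim\sqrt{\sigma_*}$ as you correctly conclude.
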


\begin{proof} We shall use notations $\sigma(A),\sigma_*(A),\widetilde{v}(A)$ from \cite{bandeira2023matrix}.
The claim follows from applying \cite{bandeira2023matrix}, Theorem 2.1 with $\sigma_*(A)=\sigma_*$ and $\tilde{v}(A)\leq C\sqrt{\sigma_*}$, and the fact that the assumptions in Theorem \ref{theorem1.5subgaussianconcentration} implies $\sigma(A)\leq 3$.
\end{proof}

\begin{proof}[\proofname\ of Theorem \ref{theorem1.5subgaussianconcentration}.] 

We divide the proof into three regimes, that $t<\frac{1}{3},t\in[\frac{1}{3},3]$ and $t>3$.

\textbf{The large $t$ regime.} We can check that, say, $\|A_{\text{free}}\|\leq 4$ via Lehner's formula \cite{lehner1999computing} or Pisier's formula (see \cite{bandeira2023matrix}, Lemma 2.5), so that for any $t\geq 3$, by \cite{bandeira2023matrix}, Corollary 2.2, \begin{equation}\label{opertornorm}\mathbb{P}(\rho(A)\geq t+1)\leq \mathbb{P}(\|A\|\geq t+1)\leq e^{-C_0\frac{(t-2)^2}{\sigma_*^2}}\leq e^{-C_0\frac{t^2}{\sigma_*^2}},\end{equation}
where $C_0>0$ is a universal constant that changes from line to line. More specifically, the parameter $\sigma_*$ in this paper coincides with the parameter $\sigma_*(X)$ in \cite{bandeira2023matrix} by \cite{bandeira2023matrix}, Lemma 3.1, and we also use the assumption $(\log n)^\frac{3}{4}\sqrt{\sigma_*}\to 0$. 

\textbf{The small $t$ regime.}
Then we consider $0\leq t\leq \frac{1}{3}$. Consider a finite covering of 
$$\{z\in\mathbb{C}:1+\frac{t}{4}\leq |z|\leq 4\}$$ by finitely many balls of radius $\frac{t^{1.5}}{400}$: $$(B_i)_{i\in\mathcal{A}}=(B(\rho_i,\frac{t^{1.5}}{400}))_{i\in\mathcal{A}},$$ where $\mathcal{A}$ is a finite index set and $1+\frac{t}{4}\leq|\rho_i|\leq 4$ for each $i$. 

In the following, for any $z\in\mathbb{C}$ denote $A_z:=A-z\mathbf{1}$, where $\mathbf{1}$ is the identity matrix. Then we define $A_{\text{free},z}$ as the free probability model associated to $A_z$ as defined in \eqref{line1007}.

We claim that \begin{equation}\label{gaussianboundss}\sigma_{min}(A_{\text{free},\rho_i})\geq \frac{t^{1.5}}{100} \text{ for every } i\in\mathcal{A}.\end{equation} [The reason why we choose $t^{1.5}$ instead of $t$, and hence get the exponent $\exp(-t^3/\sigma_*^2)$ instead of $\exp(-t^2/\sigma_*^2)$, all arise from this estimate.] Once this is shown, we have that whenever $n$ satisfies $\sqrt{\sigma_*}(\log n)^\frac{3}{4}\leq \frac{t^{1.5}}{500}$, we can find a universal $C_0>0$ such that
$$\begin{aligned}
\mathbb{P}(A\text{ has an eigenvalue in } B_i)&\leq \mathbb{P}(\sigma_{min}(A_{\rho_i})\leq\frac{t^{1.5}}{200})\\&\leq \mathbb{P}(|\sigma_{min}(A_{\rho_i})-\sigma_{min}(A_{\text{free},\rho_i})|\geq\frac{t^{1.5}}{200})\\&\leq  e^{-C_0\frac{t^3}{\sigma_*^2}}
\end{aligned}$$ 
where the last inequality follows from Proposition \ref{proposition4.1first} (1) and our assumption on the smallness of $\sigma_*$. The first inequality follows from the fact that $\sigma_{min}(A_z)$ is Lipschitz continuous in $z$, i.e. for any $\lambda\in B_{i}$ we have $\sigma_{min}(A_\lambda )\geq \sigma_{min}(A_{
\rho_i})-|\rho_i-\lambda|$, so that $A$ has eigenvalue $\lambda\in B_i$ implies $\sigma_{min}(A_{\rho_i})\leq\frac{t^{1.5}}{200}.$
    Then we complete the proof via a union bound:
    \begin{equation}\label{howdoesunionwork}\begin{aligned}
\mathbb{P}(\rho(A)\geq t+1)&\leq \mathbb{P}(\|A\|\geq t+1)1_{t\geq 4}
+\sum_{i\in\mathcal{A}}\mathbb{P}(A\text{ has an eigenvalue in }B_i)    \\&\leq (1+|\mathcal{A}|)e^{-C_0\frac{t^3}{\sigma_*^2}}
\leq C_0(1+t^{-3})e^{-C_0\frac{t^3}{\sigma_*^2}}\end{aligned}\end{equation} where we can bound the entropy cost by $|\mathcal{A}|\leq C_0t^{-3}$ for some universal constant $C_0$, and this $C_0$ can be absorbed into the constant $C_0$ in the exponential term.
This completes the proof of Theorem \ref{theorem1.5subgaussianconcentration} in the Gaussian case, for all $t\leq \frac{1}{3}$. In fact, the value $\frac{1}{3}$ here is arbitrary, not used in the above computation (but used in the proof of \eqref{gaussianboundss}), and can be changed to any other small constant in $(0,1)$.

\textbf{Intermediate range of $t$.}
When $\frac{1}{3}\leq t\leq 3$, we can find a covering of by finitely many balls of radius $\frac{1}{6}$ and use the same argument. The entropy cost of covering is absorbed in the universal constant $C_0$.

\textbf{Verification for the claim on least singular values.}
We finally prove \eqref{gaussianboundss}. A similar computation has been done in \cite{han2024outliers}
but here we need a much more precise version. Define the resolvent of the free operator$$
G_{\text{free},z}(v):=((\tau\otimes \operatorname{id})\mathcal{Y}_{\text{free,z}}-v\mathbf{1})^{-1})
$$ for every $v\in\mathbb{C}_+:=\{z\in\mathbb{C}:\Im z>0\}$. We first assume that $\mathbb{E}[a_{ii}^2]=0$ for each $i\in[n]$. Then we solve the matrix Dyson equation for $G_{\text{free},z}$.
By our assumption, $G_{\text{free},z}(v)$ solves, by \cite{han2024outliers}, Section 3.1, the self consistency equation
\begin{equation}\label{matrixdysomequationsfag}
\mathbb{E}[\mathcal{Y}_0G_{\text{free},z}(v)\mathcal{Y}_0]+G_{\text{free},z}(v)^{-1}+\begin{pmatrix} v I_n\quad z I_n\\\bar{z}I_n\quad v I_n
\end{pmatrix}=0.\end{equation}
    The existence of a unique solution to \eqref{matrixdysomequationsfag} such that $\operatorname{Im}G_{\text{free},z}(v)>0$ is proven in \cite{helton2007operator}, and we claim that the solution is necessarily given by 

    $$G_{\text{free},z}(v)=\begin{pmatrix}
    a(z,v) \mathbf{1}& b(z,v)\mathbf{1}\\ \bar{b}(z,v)\mathbf{1}&a(z,v)\mathbf{1}
\end{pmatrix},$$  where $a(z,v),b(z,v)$ are scalar functions of $(z,v)$. To see this, plugging this expression back into the self-consistency equation \eqref{matrixdysomequationsfag} and multiplying by $\mathcal{G}_{\text{free},z}(v)$ to the left, we see that we are reduced to the following $2\times 2$ system of scalar equations:

    \begin{equation}\label{expandedfunction} \begin{pmatrix}a&b\\\bar{b}&a\end{pmatrix}\cdot \begin{pmatrix}
 a& 0
\\0 &a\end{pmatrix}+\begin{pmatrix}1&0\\0&1\end{pmatrix}
+\begin{pmatrix}a&b\\\bar{b}&a\end{pmatrix}\cdot\begin{pmatrix}
 v &z\\\bar{z}&v  \end{pmatrix}
 =0.   \end{equation} Solving the self-consistency equation \eqref{expandedfunction} we get 
$$
b=\frac{-az}{a+v}.
$$Taking this further back to \eqref{expandedfunction} we get a cubic equation for $a=a(z,v)$:
\begin{equation}\label{cubicsupports}
-\frac{1}{a}=a+v-\frac{|z|^2}{a+v}.
\end{equation} To determine the support of $\mathcal{Y}_z$, we will fix $|z|>1$, take $\Im v\to 0$ although we do not set $\Re v\to 0$. By a well-known criterion (see implication (vi)-(v) in Lemma D.1 of \cite{alt2020dyson}), we have
\begin{equation}\label{spectruminclusion}
\{t\in\mathbb{R}:\lim_{\eta\downarrow 0} \Im a(z,t+i\eta)
=0\}\subset\mathbb{C}\setminus\operatorname{Sp}(\mathcal{Y}_{\text{free},z}).\end{equation}

Now we fix $\tau:=|z|^2>1$ and rewrite the cubic equation \eqref{cubicsupports} as 
\begin{equation}\label{equationsolvedbya} \tau-1-v^2=a^2+2av+\frac{v}{a}.
\end{equation} We now assume $v$ is real and $v\in[-\frac{1}{3},\frac{1}{3}]$, and formulate a condition on $v$ which guarantees all three solutions to $a$ are real. By continuous dependence of solutions on the coefficients (or solution formula for cubic polynomials), we have that if $\Re v$ satisfies the given condition, then $\Im a(z,v)\to 0$ whenever $\Im v\to 0$. We can reformulate this implication as follows:
\begin{equation}\label{inclusiontype2}
\{v\in\mathbb{R}:\text{all three solutions $a(z,v)$ to \eqref{equationsolvedbya} are real\}}\subset\mathcal{C}\setminus\operatorname{Sp}(\mathcal{Y}_{\text{free},z}).
\end{equation}

Consider the function with real entries 
$$f(x)=x^2+2vx+\frac{v}{x},$$
computing $f'(x)=0$ yields $2x+2v-\frac{v}{x^2}=0$. We assume, say, $|v|<\frac{1}{3}$, then whenever $|x|>2 |v|^{1/3}$ or $|x|<\frac{|v|^{1/3}}{4}$ then $f'(x)\neq 0$, so all critical points of $f$ lie in $\{\theta v^{1/3}:\frac{1}{4}\leq|\theta|\leq 2\}$. Drawing the plots of $f$ we see that all the local minima of $f$ are smaller than $9|v|^{2/3}$, so that for any $\zeta>9|v|^{2/3}$, the line $y=\zeta$ has three intersection points with the plot of $f$ for fixed $v$. Thus whenever $\zeta>9|v|^{2/3}$, the equation $f(x)=\zeta$ has three real solutions for this fixed value of $v$. (Note that we are checking the inclusion \eqref{inclusiontype2} for each fixed $v\in\mathbb{R}$, and thus although $f$ depends on $v$, we may fix $v$ and plot the function $f$ for this value of $v$ ). Thus by the inclusion \eqref{spectruminclusion} one has
$$
\{t\in\mathbb{R}:|t|\leq\frac{1}{3},\quad 1+t^2+9|t|^{2/3}\leq|z|^2\}\subset\mathbb{C}\setminus\operatorname{Sp}(\mathcal{Y}_{\text{free},z}).
$$ That is, 
$$\sigma_{min}(\mathcal{Y}_{\text{free},z})\geq\min\left(\frac{1}{3},\left(\frac{|z|^2-1}{10}\right)^{3/2}\right).
$$ This bound is worse than the bound $\sigma_{min}(\mathcal{Y}_{\text{free},z})\geq |z|-1$ that one may expect, but is nonetheless workable. This estimate verifies \eqref{gaussianboundss} for $t\leq\frac{1}{3}$.
\end{proof}

\section{The heavy-tailed case}\label{sections5}
This section is devoted to the proof of Theorem \ref{upperboundsecondmoment}. We will use critically the arguments in \cite{WOS:000435416700013}, and take a further averaging step and careful counting.

We begin with some graph-theoretic notations from \cite{WOS:000435416700013}.

For each $n\in\mathbb{N}_+$ let $[n]:=\{1,2,\cdots,n\}$. A directed graph on $[n]$ is $G=(V,E)$ with $V\subset[n]$ the vertices set and $E\subset [n]\times [n]$ the set of directed edges. Edges in $E$ can appear multiple times, and we say $G$ is a \textbf{multi digraph}.
For each $m\in\mathbb{N}_+$, a path $P$ of length $m$ is some sequence $P=(i_1,\cdots,i_{m+1})\in [n]^{m+1}$, and we say $P$ is closed if it has coinciding start and end vertices, i.e., $i_1=i_{m+1}$. Each path $P$ generates a multigraph $G_P=(V,E)$ where $(i,j)\in E$ with multiplicity $n$ if the directed edge $(i,j)$ appears $n$ times in $P$.

Given a multi digraph, we call it a \textbf{double cycle} if the graph is formed by repeating a cycle twice. For a path $P$, we call it an \textbf{even path} if every adjacent pair $(i\to j)$ appears an even number of times. A multi digraph generated by an even path (by the method described in the last paragraph) is called an \textbf{even digraph}. Two multi-digraphs $G=(V,E)$, $G'=(V',E')$ on $[n]$ are \textbf{isomorphic} if there is a bijection $f:V\to V'$ such that $(i,j)\in E$ if and only if $(f(i),f(j))\in E'$, and the two edges have the same multiplicity in $G,G'$ respectively.

A \textbf{rooted digraph} is a digraph $G=(V,E,\rho)$ where $\rho\in E$ is a distinguished directed edge. Two rooted digraphs $G=(V,E,\rho),G'=(V',E',\rho')$ are \textbf{isomorphic} if there is an isomorphism preserving the multiplicity of edges and mapping $\rho$ to $\rho'$.

Given an even digraph $G=(V,E),$ we define its weight associated to the random matrix $A$ in Theorem \ref{upperboundsecondmoment} as 
$$
p(G):=\prod_{(i,j)\in E}|x_{ij}|^{n_{i,j}}, 
$$ where $n_{ij}\geq 2$ is the multiplicity of edge $(i,j)\in E$.

For a given unlabeled even digraph $\mathcal{U}$, we denote by $\{G: G\sim \mathcal{U}\}$ the equivalence class of even digraphs to $\mathcal{U}$, where the equivalence relation is modulo the isomorphism relation defined in the previous paragraph.

 We define the following statistical quantity, for each $h\in\mathbb{N}_+$:
\begin{equation}\label{definesu}
\mathcal{S}_h(\mathcal{U}):=\frac{2^h|\{G\sim\mathcal{U}:p(G)\geq 2^h\}|}{|\{G:G\sim \mathcal{U}\}|}
\end{equation} and 
\begin{equation}\label{defineshu}\mathcal{S}(\mathcal{U}):=\max(1,\max_{h\geq 0}\mathcal{S}_h(\mathcal{U})).
\end{equation} For rooted even digraphs $G=(V,E,\rho)$ we similarly define 
$$ p_r(G)=\prod_{(i,j)\in E} |x_{ij}|^{n_{ij}-2\mathbf{1}_{(i,j)=\rho}}.
$$
In the definition of $p(G),p_r(G)$ we have not included the coefficients $\{b_{ij}\}_{1\leq i,j\leq n}$. 

Let $\mathcal{C}_m$ denote a double cycle with $2m$ edges on $[n]$, and let $\mathcal{C}_m^*$ denote a rooted double cycle with $2m$ edges on $[n]$. Recall the following lemma on cycle statistics from \cite{WOS:000435416700013}, Lemma 3.1. For each $k\geq 1$ define $\mathcal{A}_k:=\mathcal{A}_k^1\cap\mathcal{A}_k^2\cap\mathcal{A}_k^3$, with 
$$
\mathcal{A}_k^1:=\cap_{m=1}^k \left\{\sum_{h=0}^\infty \mathcal{S}_h(\mathcal{C}_m)\leq k^2\right\},
$$
$$
\mathcal{A}_k^2:=\cap_{m=1}^k \left\{\sum_{h=0}^\infty \mathcal{S}_h(\mathcal{C}_m^*)\leq k^2\right\},
$$
$$
\mathcal{A}_k^3:=\cap_{m=1}^k \left\{\sum_{h=0}^\infty 2^{h\epsilon/2}\mathcal{S}_h(\mathcal{C}_m)\leq k^2B^m\right\}.
$$

\begin{lemma}(\cite{WOS:000435416700013}, Lemma 3.1)
    For every $k\geq 1$ we have $\mathbb{P}(\mathcal{A}_k)\geq 1-\frac{6}{k}$.

\end{lemma}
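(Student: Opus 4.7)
The plan is to bound the failure probability of each of $\mathcal{A}_k^1, \mathcal{A}_k^2, \mathcal{A}_k^3$ by $2/k$ separately, via Markov's inequality applied to the sums $\sum_h \mathcal{S}_h$, and then conclude by a union bound. The workhorse is the observation that $\mathcal{S}_h(\mathcal{U})$ is essentially a tail probability: if $U$ is a uniformly random representative of the isomorphism class $\{G \sim \mathcal{U}\}$ then $\mathcal{S}_h(\mathcal{U}) = 2^h\, \mathbb{P}_U(p(U) \geq 2^h)$, still random via the matrix entries.

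I would first record the elementary discrete layer-cake inequalities: for any nonnegative real $x$,
$$
\sum_{h \geq 0} 2^h \mathbf{1}[x \geq 2^h] \leq 2x, \qquad \sum_{h \geq 0} 2^{h(1+\epsilon/2)} \mathbf{1}[x \geq 2^h] \leq C_\epsilon\, x^{1+\epsilon/2},
$$
with $C_\epsilon = 2^{1+\epsilon/2}/(2^{1+\epsilon/2}-1)$. Averaging over $U$ and taking expectation over the matrix,
$$
\mathbb{E}\!\left[\sum_h \mathcal{S}_h(\mathcal{U})\right] \leq \frac{2}{|\{G \sim \mathcal{U}\}|} \sum_{G \sim \mathcal{U}} \mathbb{E}[p(G)], \qquad \mathbb{E}\!\left[\sum_h 2^{h\epsilon/2}\mathcal{S}_h(\mathcal{U})\right] \leq \frac{C_\epsilon}{|\{G \sim \mathcal{U}\}|} \sum_{G \sim \mathcal{U}} \mathbb{E}[p(G)^{1+\epsilon/2}],
$$
and similarly with $p$ replaced by $p_r$ in the rooted case.

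Next I would compute these expectations explicitly. For $\mathcal{U} = \mathcal{C}_m$, any labeled double cycle $G$ is of the form $i_1 \to i_2 \to \cdots \to i_m \to i_1$ traversed twice, so
$p(G) = \prod_{\ell=1}^m |x_{i_\ell i_{\ell+1}}|^2$. Since the $x_{ij}$ are independent with $\mathbb{E}|x_{ij}|^2 = 1$, one immediately gets $\mathbb{E}\,p(G) = 1$, and using the $2{+}\epsilon$ moment assumption, $\mathbb{E}\,p(G)^{1+\epsilon/2} = M^m$ where $M := \mathbb{E}|\xi|^{2+\epsilon} < \infty$. The rooted version $\mathcal{C}_m^*$ simply drops the factor at the root edge, which still gives $\mathbb{E}\,p_r(G) = 1$. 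Consequently
$$
\mathbb{E}\!\left[\sum_h \mathcal{S}_h(\mathcal{C}_m)\right] \leq 2, \quad \mathbb{E}\!\left[\sum_h \mathcal{S}_h(\mathcal{C}_m^*)\right] \leq 2, \quad \mathbb{E}\!\left[\sum_h 2^{h\epsilon/2}\mathcal{S}_h(\mathcal{C}_m)\right] \leq C_\epsilon M^m,
$$
uniformly in $m \leq k$. Defining $B := C_\epsilon^{1/m_0} M$ (or absorbing $C_\epsilon$ into a redefined $B$ so that $C_\epsilon M^m \leq B^m$ for all $m \geq 1$) gives the clean exponential bound needed in $\mathcal{A}_k^3$.

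Markov's inequality now yields, for each $1 \leq m \leq k$,
$$
\mathbb{P}\!\left(\sum_h \mathcal{S}_h(\mathcal{C}_m) > k^2\right) \leq \tfrac{2}{k^2}, \quad \mathbb{P}\!\left(\sum_h \mathcal{S}_h(\mathcal{C}_m^*) > k^2\right) \leq \tfrac{2}{k^2}, \quad \mathbb{P}\!\left(\sum_h 2^{h\epsilon/2} \mathcal{S}_h(\mathcal{C}_m) > k^2 B^m\right) \leq \tfrac{1}{k^2}.
$$
A union bound over $m = 1, \ldots, k$ then gives $\mathbb{P}((\mathcal{A}_k^1)^c), \mathbb{P}((\mathcal{A}_k^2)^c) \leq 2/k$ and $\mathbb{P}((\mathcal{A}_k^3)^c) \leq 1/k$, and a final union bound delivers $\mathbb{P}(\mathcal{A}_k^c) \leq 6/k$ (with room to spare). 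The only point requiring any care is the choice of $B$ so that the constants $C_\epsilon$ can be absorbed uniformly in $m$; this is harmless since $B$ is allowed to depend on $\epsilon$ and on $\mathbb{E}|\xi|^{2+\epsilon}$, both of which are fixed by the hypotheses of Theorem~\ref{upperboundsecondmoment}. No serious obstacle is expected; the main work is simply making the layer-cake and expectation computations precise.
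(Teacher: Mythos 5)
Your argument is correct and is the standard one. The paper under review does not supply its own proof but cites \cite{WOS:000435416700013}, and your layer-cake computation ($\sum_h 2^h\mathbf{1}[x\geq 2^h]\leq 2x$, $\sum_h 2^{h(1+\epsilon/2)}\mathbf{1}[x\geq 2^h]\leq C_\epsilon x^{1+\epsilon/2}$) together with $\mathbb{E}[p(G)]=1$, $\mathbb{E}[p_r(G)]=1$, $\mathbb{E}[p(G)^{1+\epsilon/2}]=M^m$, a first-moment Markov bound, and a union bound over $m\leq k$ is precisely how that lemma is established there.
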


\begin{Proposition}\label{howdouweaga45.}(Main estimate, (\cite{WOS:000435416700013}, Proposition 3.3)
    Consider $n^{\epsilon/16}\geq 5ek^2$ and an unlabeled rooted even graph $\mathcal{U}$ having $2k$ edges and $x$ vertices. Set 
    $$
y_x:=\max\left(0,k-x-\frac{4k\log B}{\epsilon \log n}\right)
    ,$$ then on the event $\mathcal{A}_k$,
    $$
\mathcal{S}(\mathcal{U})\leq n^{k-x-\epsilon y_x/16} k^2(3ek^2)^\frac{4k\log B}{\epsilon\log n}.
    $$
\end{Proposition}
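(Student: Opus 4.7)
The plan is to adapt the argument of Proposition 3.3 of \cite{WOS:000435416700013} with the modifications needed for the inhomogeneous variance profile. Under the hypothesis \eqref{howdoesupperbounds} these modifications are minor: the uniform bound $b_{ij}\leq \sqrt{C/n}$ can be absorbed into each factor $|x_{ij}|$ so that the probabilistic weight $p(G)$ acquires at most a multiplicative constant per edge, which is harmless since the total edge count $2k$ enters only into the combinatorial factors. The proof then rests on three ingredients: a reduction of $\mathcal{U}$ to constituent double cycles, a dyadic peeling of the weight $p(G)$, and an interpolation between the moment estimates on $\mathcal{A}_k^1,\mathcal{A}_k^2$ and the sharper estimate on $\mathcal{A}_k^3$.

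First, since $\mathcal{U}$ is an even digraph, every vertex has equal in- and out-degree (counting multiplicity), so $\mathcal{U}$ admits an Eulerian traversal. By splitting this traversal into sub-cycles and doubling each, one obtains a collection of (sub-)double cycles $\mathcal{C}_{m_1},\ldots,\mathcal{C}_{m_r}$ whose combined edge multiplicities cover those of $\mathcal{U}$, with $\sum_i m_i \le k$. This gives a factorization $p(G)\le \prod_i p(G|_{\mathcal{C}_{m_i}})^{\alpha_i}$ for appropriate nonnegative exponents $\alpha_i$. Consequently the event $\{p(G)\ge 2^h\}$ is contained in the union over compositions $h_1+\cdots+h_r = h$ of $\bigcap_i \{p(G|_{\mathcal{C}_{m_i}}) \ge 2^{h_i}\}$, and applying the bounds $\mathcal{S}_{h_i}(\mathcal{C}_{m_i})\le \min(k^2, k^2 B^{m_i} 2^{-h_i\epsilon/2})$ furnished respectively by $\mathcal{A}_k^1$ and $\mathcal{A}_k^3$ yields an estimate for $\mathcal{S}_h(\mathcal{U})$ in terms of cycle statistics.

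Second, the base factor $n^{k-x}$ emerges from the ratio of the count of labeled representatives of $\mathcal{U}$ (scaling like $n^x$ up to an automorphism factor) to the count $n^k$ one would have if all $2k$ edges came from $k$ independent disjoint double edges. To obtain the improvement $n^{-\epsilon y_x/16}$ when $y_x>0$, one splits the dyadic sum over $h$ at a threshold $h_0\sim \epsilon y_x \log n / 8$: for $h\le h_0$ the $\mathcal{A}_k^1$ bound is used, while for $h>h_0$ the $\mathcal{A}_k^3$ bound delivers an extra decay factor $2^{-h_0\epsilon/2}\le n^{-\epsilon y_x/16}$ at the cost of a multiplicative $B^{m_i}$ per cycle, aggregating to $B^k$. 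The combinatorial overhead of enumerating cycle decompositions and distributing $h$ over them is bounded by $(3ek^2)^r$, and the hypothesis $n^{\epsilon/16}\ge 5ek^2$ guarantees that one can restrict attention to $r\le 4k\log B/(\epsilon\log n)$, producing the second factor in the claimed bound.

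The main obstacle is the bookkeeping: distributing the weight $h$ coherently over the cycles while accounting for shared vertices (which depress the effective free-vertex count below $n^x$), and balancing the $B^m$ overhead from $\mathcal{A}_k^3$ against the $n^{-\epsilon y_x/16}$ saving. The delicate point is choosing $h_0$ so that both regimes contribute bounds of the same order; this is where the particular constant $1/16$ in the exponent of the saving is pinned down, and where the assumption $n^{\epsilon/16}\ge 5ek^2$ ensures the combinatorial factor $(3ek^2)^{4k\log B/(\epsilon\log n)}$ remains subdominant.
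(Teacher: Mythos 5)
The paper does not prove this statement at all: Proposition \ref{howdouweaga45.} is lifted verbatim from \cite{WOS:000435416700013}, Proposition 3.3, as the label in the statement makes explicit, and no proof is offered in this paper. So there is no ``paper's own proof'' to compare your reconstruction against.

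More importantly, your opening premise is a misreading of how the proposition is used. You propose to ``adapt'' the argument to the inhomogeneous variance profile by ``absorbing'' the factors $b_{ij}\le\sqrt{C/n}$ into $|x_{ij}|$. But the proposition requires no such adaptation: the quantities $p(G)$, $p_r(G)$, $\mathcal{S}_h(\mathcal{U})$ and $\mathcal{S}(\mathcal{U})$ are defined purely in terms of the i.i.d. variables $x_{ij}$, and the paper states explicitly just before Lemma \ref{lemma60678} that the coefficients $\{b_{ij}\}$ are \emph{not} included in $p(G)$ or $p_r(G)$. The proposition is therefore exactly the coefficient-free combinatorial estimate of \cite{WOS:000435416700013} and is imported unchanged. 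The inhomogeneity enters this paper's argument at a different point: in the proof of Theorem \ref{upperboundsecondmoment}, the expectation is split (see \eqref{whatisboundedby?} and the surrounding discussion) into the factor $\mathbb{E}_\pi[\mathbb{E}[p_r(\pi G)\mid\mathcal{E}_k]]$, which Proposition \ref{howdouweaga45.} handles without modification, times a separate deterministic factor $V_r(G)$ built from the $b_{ij}$; that second factor is estimated by the ``contributions from the coefficients'' computation, which is the genuinely new step and relies on the row/column or spectral-radius control of $S$, not on Proposition \ref{howdouweaga45.}. So while your three-ingredient sketch of the Bordenave--Caputo-style argument (Eulerian decomposition into double cycles, dyadic peeling of $p(G)$, interpolation between the $\mathcal{A}_k^1$ and $\mathcal{A}_k^3$ bounds) is a plausible outline of the \emph{external} proof, the framing that it needs modification for the variance profile is wrong, and a careful reader should be directed to \cite{WOS:000435416700013} for the actual proof rather than a reconstruction.
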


We also need a few combinatorial counting estimates. 
\begin{lemma}\label{lemma60678}
Let $\mathcal{UG}(k,\ell)$ be the isomorphism class of unlabeled
rooted digraphs with $k$ vertices and $2\ell$ edges. Then 
$$
|\mathcal{UG}(k,\ell)|\leq k^{2(k-\ell)+1}.
$$ \end{lemma}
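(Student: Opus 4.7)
The plan is to bound $|\mathcal{UG}(k,\ell)|$ by constructing a canonical encoding of each isomorphism class and counting the admissible encodings.

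First, I would use the distinguished root of $G=(V,E,\rho)$ to break vertex symmetries. Given an unlabeled rooted digraph in $\mathcal{UG}(k,\ell)$, perform a canonical depth-first traversal starting from the root (with ties broken by a fixed lexicographic rule on edge targets) and label the vertices $1,2,\ldots,k$ in the order of first visit. This assigns each unlabeled rooted isomorphism class a unique canonically labeled representative, so it suffices to upper bound the number of canonically labeled digraphs compatible with this scheme.

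Next, I would exploit the tree structure that the DFS reveals: among the $2\ell$ directed edge-uses, exactly $k-1$ are \emph{tree edges} (those along which new vertices are first discovered). These are determined by the canonical labeling itself and carry no additional combinatorial freedom; the remaining edges (non-tree edges together with repeated tree-edge uses) are what need to be enumerated. To obtain the claimed exponent $2(k-\ell)+1$, I would leverage the structural hypotheses implicit in the lemma's intended use — namely, that the relevant digraphs arise as multigraphs associated to closed even walks in the surrounding moment-method computation. The resulting Eulerian balance (equal in- and out-degree at each vertex), together with the multiplicity-$\geq 2$ constraint imposed by evenness, couples the choices of excess in- and out-edges at each vertex and sharply restricts the admissible configurations. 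The remaining factor of $k$ in the exponent accounts for the choice of root (or equivalently, the choice of which edge is distinguished).

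The main obstacle will be to carry out this accounting precisely enough that the exponent $2(k-\ell)+1$ — which \emph{decreases} as $\ell$ grows — emerges from the combinatorics, since a naive canonical-labeling bound gives an exponent growing with $\ell$. The Eulerian/evenness structure must therefore be exploited quantitatively, not merely qualitatively: each unit of circuit rank $\ell-k+1$ should kill a factor of roughly $k^2$ rather than introduce one. An alternative and perhaps cleaner route is an induction on the circuit rank $\ell-k+1$, in which one peels off a leaf or contracts a bivalent vertex while simultaneously adjusting $k$ and $\ell$, and propagates the bound through the recursion; the base case (a single cycle through the root) contributes the leading $k$ and each reduction step consumes the $k^{-2}$ factor needed to match the exponent.
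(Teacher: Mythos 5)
Your proposal is not a complete proof, and you say so yourself: you identify "carrying out this accounting precisely enough that the exponent $2(k-\ell)+1$ emerges" as the main obstacle and then leave it unresolved. That accounting is the entire content of the lemma. The paper's own proof is a one-line citation to \cite{WOS:000435416700013}, Lemma 2.3, which bounds the number of \emph{labeled} rooted even digraphs on $[n]$ by a product of the form $n^{|V|}\cdot k^{2(k-\ell)+1}$; dropping the $[n]$-labels just removes the $n^{|V|}$ factor. You correctly intuit that the "even" hypothesis (missing from the literal lemma statement, but essential — without it rooted trees alone already give a count exponential in the vertex number) and the resulting Eulerian balance are what kill the exponential. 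But gesturing at Eulerian balance is not a bound. The quantitative point, which your outline omits, is that in the directed even setting each backtracked tree-edge costs four edge-uses rather than two (both $(u,v)$ and $(v,u)$ must attain even multiplicity), so unlike the Hermitian/undirected case — where doubled trees genuinely produce $\sim 4^{|V|}$ shapes — here the excess of distinct directed edges over vertices, not the vertex count, parametrizes the number of shapes. Turning that observation into the stated exponent is precisely what \cite{WOS:000435416700013}, Lemma 2.3 does and what you have not done; neither have you executed the alternative circuit-rank induction you sketch.

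Two further concrete problems. Your "canonical DFS with lexicographic tiebreaking" is circular: you cannot order edge targets lexicographically before assigning labels. The canonical labeling in this line of argument is the order of first appearance along a generating walk (compare the shape $\mathcal{S}(\mathbf{u})$ used in Section 3 of the paper), and the ambiguity in the choice of walk is a separate overcount controlled by Lemma~\ref{greatestlemma5.4}, not folded into this lemma. Also, as you half-noticed when you flagged that the exponent "decreases as $\ell$ grows," the stated lemma appears to transpose the roles of $k$ and $\ell$: the application in \eqref{whatisboundedby?} and the cited reference take $\ell$ to be the number of vertices and $2k$ the total number of edges, which is what makes $k-\ell\geq 0$ for a connected Eulerian even digraph (there are $m$ distinct directed edges with $\ell\leq m\leq k$) and hence $2(k-\ell)+1\geq 1$. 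Your literal reading of "$k$ vertices, $2\ell$ edges" inherits the sign confusion you were fighting against.
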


\begin{proof}
    This is essentially \cite{WOS:000435416700013}, Lemma 2.3 but we do not label the vertices in $[n]$.
\end{proof}

\begin{lemma}\label{greatestlemma5.4}[\cite{WOS:000435416700013}, Lemma 2.2]
    Given $G=(V,E)$ an even digraph, $|E|=2k$, $|V|=\ell$. There are at most $\ell(4k-4\ell)!$ number of paths that generate $G$.
\end{lemma}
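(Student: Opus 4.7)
The plan is to count generating closed paths by recognizing each one as an Eulerian circuit of $G$ together with a choice of starting vertex. Since $G$ is even, every vertex satisfies $\deg^+(v) = \deg^-(v)$, and since $G$ is generated by a single closed path, it is strongly connected. These are precisely the conditions for $G$ to admit Eulerian circuits, and each generating path is uniquely encoded by a starting vertex together with an Eulerian circuit rooted there.

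First I would fix the starting vertex, contributing a factor of at most $\ell$. With the starting vertex $v_0$ fixed, the number of Eulerian circuits is given by the BEST theorem as
\[
t_{v_0}(G) \prod_{v \in V}(\deg^+(v) - 1)!,
\]
where $t_{v_0}(G)$ counts spanning arborescences converging to $v_0$. Since $\deg^+(v) \geq 1$ at every vertex of $V$ and $\sum_v \deg^+(v) = 2k$, one has $\sum_v (\deg^+(v)-1) = 2k - \ell$, and the product of factorials is bounded by $(2k-\ell)!$. The key additional input is that $G$ is even: every distinct directed edge of $G$ appears with even multiplicity at least $2$, so the copies of each edge can be grouped into matched pairs. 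This pairing effectively halves the number of independent orderings of edges at each vertex, and the arborescence count $t_{v_0}(G)$ is controlled by the skeleton multigraph, which has at most $k$ distinct directed edges and hence at most $\ell - 1$ arborescence edges to choose. Tracking the pairings together with the freedom to order them yields the refined bound $(4k-4\ell)!$, where the factor of $4$ absorbs both the pairing (factor $2$) and the two possible orientations in which each pair is consumed.

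The main obstacle is the precise combinatorial bookkeeping needed to collapse the BEST-theorem expression into the single clean factorial $(4k-4\ell)!$. In particular, one must carefully distinguish distinct edges from their copies, bound $t_{v_0}(G)$ by a function of $k-\ell$ via a Matrix-Tree-type estimate applied to the skeleton, and organize the residual orderings into $4(k-\ell)$ slots. A more elementary alternative, which likely corresponds to the original argument in the cited paper, avoids the BEST theorem altogether: one builds the Eulerian circuit step by step by a greedy exploration, at each vertex visit choosing among the unused outgoing edges, and then directly verifies that the product of choice counts over the entire walk is bounded by $(4k-4\ell)!$ once the pairing of edges forced by evenness is taken into account.
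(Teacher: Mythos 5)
The paper does not prove this lemma; it only states it, citing \cite{WOS:000435416700013}, Lemma~2.2, so your proposal has to be judged on its own. As written it has a genuine gap. After fixing the starting vertex, you invoke the BEST theorem and bound the degree-factorial product by $\prod_v(\deg^+(v)-1)!\le(2k-\ell)!$. But $(2k-\ell)!$ is far \emph{larger} than the target $(4k-4\ell)!$ precisely in the dominant regime: for a doubled $\ell$-cycle one has $\ell=k$, so $(2k-\ell)!=(k)!$ while $(4(k-\ell))!=0!=1$. The BEST theorem computes Eulerian circuits with \emph{labeled} edges, and the walks in the lemma do not distinguish the copies of a given directed edge, so the relevant count is the BEST expression divided by $\prod_e m(e)!$ — a correction factor you never introduce. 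Moreover your claim that $t_{v_0}(G)$ is ``controlled by the skeleton'' with at most $\ell-1$ arborescence edges is not a bound: with edge multiplicities each arborescence edge can be any of its copies, so $t_{v_0}(G)$ is exponential in $\ell$ (it equals $2^{\ell-1}$ for the doubled cycle). The entire content of the lemma is that the product $t_{v_0}(G)\prod_v(\deg^+(v)-1)!/\prod_e m(e)!$ collapses from something of size $k!$ down to $(4(k-\ell))!$, and the sentence ``tracking the pairings together with the freedom to order them yields the refined bound'' states the answer without deriving it. You acknowledge this obstacle yourself; the acknowledgment is correct, but it means the proposal is not a proof.

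In addition, the simple pairing heuristic cannot be read off the BEST formula locally. The required cancellation between $t_{v_0}(G)$, the degree factorials, and $\prod_e m(e)!$ is a global statement about the graph, and getting the precise exponent $4(k-\ell)$ out of it would need a nontrivial Matrix--Tree-type estimate together with careful bookkeeping — effectively redoing the whole argument in a harder language. The original argument in \cite{WOS:000435416700013} is the greedy exploration you mention at the end: reconstruct the walk step by step, observe that the choice of outgoing edge is forced except at an ``excess'' set of steps whose cardinality is bounded in terms of $k-\ell$, and directly bound the product of choice counts by $(4(k-\ell))!$. That route bypasses BEST entirely and is the one that actually yields the clean factorial. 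If you want to salvage the BEST-theorem approach you would need to (i) explicitly divide by $\prod_e m(e)!$, (ii) prove a quantitative upper bound on $t_{v_0}(G)$ accounting for the edge multiplicities, and (iii) show the resulting ratio is at most $(4(k-\ell))!$; none of those steps is present.
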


Now we are ready to prove Theorem \ref{upperboundsecondmoment}. We follow the lines of \cite{WOS:000435416700013} to a large extent, yet the main novelty of our proof compared to \cite{WOS:000435416700013} is that we add an additional permutation $\pi$ to the indices of $A_n$ in the estimate \eqref{mainproductestimate2} to take a further averaging, which is helpful when $\{b_{ij}\}$ do not have the same value.

\begin{proof}[\proofname\ of Theorem \ref{upperboundsecondmoment}, (1)] We work on the event $\mathcal{B}$ where $|x_{ij}|\leq n^2$ for all $(i,j)\in [n]^2$. By Markov, $\mathbb{P}(\mathcal{B})\geq 1-1/n^2$. Let $\mathcal{E}_k:=\mathcal{A}_k\cap\mathcal{B}$, then $\mathbb{P}(\mathcal{E}_k)\geq 1-n^{-2}-6k^{-1}$. 

\textbf{Averaging over a random permutation for heavy-tail entries.}
At this point we need to take one more averaging procedure. let $\pi\in S(n)$ be uniformly chosen permutation of $[n]$, and denote by $\pi A$ the matrix with entries $(b_{ij}x_{\pi(i),\pi(j)})_{i,j\in [n]}$, that is, we permute the entries of $\{x_{ij}\}$ via a uniformly chosen permutation $\pi$ but keep the coefficients $b_{ij}$ fixed at location $(i,j)$. Since $x_{ij}$ are i.i.d. entries we have $A\overset{\text{law}}{\equiv}\pi A$ for any $\pi\in S_n$. Moreover, from the definition of $\mathcal{E}_k$ we readily see that $\mathcal{E}_k$ is invariant under $\pi$, that is, $A\in\mathcal{E}_k$ if and only if $\pi A\in\mathcal{E}_k$. We use $\mathbb{E}_\pi$ to denote the expectation taken with respect to a uniformly chosen $\pi\in S_n$. For an arbitrary path $P_1=(i_0,i_1,\cdots,i_r)\in [n]^r$, the permutation $\pi$ naturally acts on $P_1$ and maps it to $\pi P_1=(\pi(i_0),\pi(i_1),\cdots,\pi(i_{r-1}),\pi(i_r))$, so that $\pi P_1$ permutes the coordinates of $P_1$. For this path $P_1$, we define its weight as 
$$
w(P_1)=\prod_{t=1}^r x_{i_{t-1}i_t}.
$$ Then $w(\pi P_1)$ is the weight of the path $\pi P_1$.

From this, we estimate the conditional moment of $\rho(A_n)$ via 

\begin{equation}\label{mainproductestimate2}\begin{aligned}
    \mathbb{E}[\rho(A_n)^{2k-2}\mid\mathcal{E}_k]&=
    \mathbb{E}_\pi \mathbb{E}[\rho(\pi A_n)^{2k-2}\mid\mathcal{E}_k]
     \\&\leq\sum_{i,j}\sum_{P_1,P_2:i\mapsto j} \mathbb{E}_\pi[\mathbb{E}[w(\pi P_1)\bar{w}(\pi P_2)V(P_1)\bar{V}(P_2)\mid\mathcal{E}_k]],
\end{aligned}\end{equation}
where the sum is over paths $P_1,P_2$ from $i$ to $j$ with length $k-1$ and $V(P_1),V(P_2)$ denote the product of entries of $b_{ij}$ along path $P_1$ (resp. $P_2$): that is, for $P_1$ is $(i_0,i_1,\cdots,i_r)$, then $$V(P_1):=\prod_{t=1}^{r}b_{i_{t-1},i_t}.$$

\textbf{Reduction to even digraphs}. 
Conditioning on $\mathcal{E}_k$ destroys independence of $x_{ij}$'s, but the sign of each $x_{ij}$ remain independent Bernoulli distributions because $x_{ij}$ has a symmetric distribution. Thus $$\mathbb{E}[w(\pi P_1)\bar{w}(\pi P_2)|\mathcal{E}_k]=0$$ unless each edge in $\pi P_1\cup \pi P_2$ has even multiplicity (equivalently, each edge in $P_1\cup P_2$ has even multiplicity).

    Let $P$ be the closed path starting from $i$, going through $P_1$, then take edge $(j\mapsto i)$, then follow $P_2$, and end again with $(j\mapsto i)$, so that $P$ is even closed of length $2k$. We use the notation $V_r(P)$ to denote the product of entries of $b_{ij}$ along the path $P$ but the multiple of entries on the root edge $r$ is reduced by multiplicity two: This is because we set $(i,j)$ the root and it contributes to no increase in the moments.

    Recall that for a path $P$, $G_P$ denotes the multigraph generated by $P$ (see the beginning of Section \ref{sections5}). Then we use the notation $V_r(G_P)$ to denote the product of entries $b_{ij}$ with $(i,j)$ ranging over directed edges in $G_P$, but the root is counted with multiplicity reduced by two. By injectivity of $(P_1,P_2)\to P$, we get 
    \begin{equation}\label{thisexpressions}
     \mathbb{E}[\rho(A_n)^{2k-2}|\mathcal{E}_k]\leq\sum_P\mathbb{E}_\pi\left[\mathbb{E}[p_r(\pi G_p)V_r(G_p)]|\mathcal{E}_k\right],
    \end{equation}
with the summation over all even closed paths of length $2k$: $P=(i_1,\cdots,i_{2k+1})$ with $(i_k,i_{k+1})$ the root. Via Lemma \ref{lemma60678} and \ref{greatestlemma5.4} the right hand side of \eqref{thisexpressions} is bounded by 
    \begin{equation}\label{whatisboundedby?}
        k\sum_{x=1}^k (4k)^{4(k-x)}\sum_{G\in\mathcal{G}_n(k,x)}\mathbb{E}_\pi[\mathbb{E}[|p_r(\pi G)V_r(G)|\mid\mathcal{E}_k]],
    \end{equation} with $\mathcal{G}_n(k,x)$ the set of rooted even directed graphs on $[n]$ having $2k$ edges and $x$ vertices.

\textbf{Upper bound the weights on heavy-tail edges.}
On the event $\mathcal{E}_k$ we have $\operatorname{p}_r(G)\leq n^{4k-4}$, so setting $H:=\lfloor 4k\log_2 n\rfloor$ we have 
\begin{equation}\label{decomposeprg} p_r(G)\leq 1+2\sum_{h=0}^H 2^h 1_{p_r(G)\geq 2^h}\text{ on }\mathcal{E}_k.
\end{equation}

For any fixed $G\in \mathcal{G}_n(k,x)$ we have on the event $\mathcal{E}_k$, $$\mathbb{E}_\pi[\mathbb{E}[p_r(\pi G)]|\mathcal{E}_k]\leq 3H\mathcal{S}(G)\leq 3H n^{k-x}n^{-\epsilon y_x/16}k^2(3ek^2)^\frac{4k\log B}{\epsilon \log n},$$
where the first equality is because $\{\pi G,\pi\in S_n\}$ exhausts all even digraphs on $[n]$ that are isomorphic to $G$,
and we recall the definition of $\mathcal{S}(\mathcal{U})$ and $\mathcal{S}_h(\mathcal{U})$ from \eqref{definesu}, \eqref{defineshu}: these two quantities are the average over the weights on paths isomorphic to the given path, and our expectation taken over $\pi$ also generates the uniform measure over the isomorphism class and thus gives $\mathcal{S}(\mathcal{U})$. The second inequality follows from Proposition \ref{howdouweaga45.}.   

\textbf{Contributions from the coefficients $\{b_{ij}\}$}. 
    Next we compute $\sum_{G\in\mathcal{G}_n(k,x)}V_r(G)$: this is the major technical ingredient in the current proof that we add upon to the proof in \cite{WOS:000435416700013}. For each $G\in\mathcal{G}_n(k,x)$ we choose a doubled path $P$ spanning $G$ (the choice of this path does not affect weight counting), where as each edge in $P$ is doubled, we can replace $P$ by a (possibly disconnected) path $\bar{P}$ starting from the same starting point of $P$ but where each edge visited at the even number of times is removed (so only edges visited for the first, third, fifth, etc. times are kept). Although $\bar{P}$ may be disconnected, it is uniquely determined by $P$ (we denote this by $P\mapsto\bar{P}$) and has $x$ vertices and $k$ edges. By Lemma \ref{lemma60678}, the isomorphism class of $\bar{P}$ has cardinality bounded by $k^{2(k-x)+1}$. Fix a candidate $\bar{P}$. We first assume $\bar{P}$ has no multiple edges (so the original path $P$ has no edges visited four times or more, and obviously these paths make the dominant contribution), and consider each connected component of $\bar{P}$ separately. Evidently if $k=x$ then $\bar{P}$ is simply a directed path with no self-intersection. Let $\bar{P}$ be $1\mapsto 2\mapsto\cdots s_1=k_1\mapsto k_1+1\cdots\mapsto s_2=k_2\mapsto\cdots k$ be a representation of $P$ where $s_1$ is the first self-intersection of $\bar{P}$ so that $s_1$ equals one of $1,\cdots,s_1-1$ (denoted $t_1$) and $s_2$ is the second self-intersection so $s_2=t_2$ for some $t_2<s_2$, etc. There are altogether $k-x$ self-intersections of $\bar{p}$, denoted $s_1,\cdots,s_{k-x}$:
    $$\sum_{G\in\mathcal{G}_n(k,x):\quad P\mapsto\bar{P}}V_r(G)=\sum_{n_0,\cdots,n_k\in [n]:\quad n_{s_i}=n_{t_i}\text{for each }i=1,\cdots,k-x} b^2_{n_0n_1}\cdots b^2_{n_{k-1}n_k}.
    $$ Since $|s_{ij}|\leq\frac{C}{n}$ by assumption, the above sum can be bounded by 
$$ 
\left(\frac{C}{n}\right)^{k-x}\sum_{n_i\in[n]\text{distinct},\quad i\in [k]\setminus \{s_1,\cdots,s_{k-x}\}} b^2_{n_0n_1}\cdots b^2_{n_{k-1}{n_k}}\leq \left(\frac{C}{n}\right)^{k-x} \sigma^{x},
$$ where we use the definition of $\sigma$ in \eqref{maximalrowcolumn}.

For disconnected $\bar{P}$ we treat each component individually, and for $\bar{P}$ with multiple edges we may first remove the multiple edges and estimate as above, and finally use $s_{ij}\leq\frac{C}{n}$ to see such terms are further negligible compared to cases where $\bar{P}$ has no repeated edges. Taking this back into \eqref{thisexpressions}, \eqref{whatisboundedby?} and using Lemma \ref{lemma60678} we see that 
$$\begin{aligned}
  \mathbb{E}[\rho(A_n)^{2k-2}|\mathcal{E}_k]&\leq 3H\sum_{x=1}^k  k^4(4k)^{6(k-x)}\sigma^xC^{k-x}n^{-\epsilon y_x/16}(3ek^2)^\frac{4k\log B}{\epsilon\log n}.
\end{aligned}
$$

\textbf{The eventual upper bound.} Taking $k\sim (\log n)^2,$ so that whenever $x\leq k-\frac{8k\log B}{\epsilon\log n}$ then $y_x\geq (k-x)/2$ and $$(4k)^{6(k-x)}(C\sigma^{-1})^{k-x}N^{-\epsilon y_x/16}\leq 1,$$
the following estimate also holds: $$\sum_{x=1}^k (4k)^{6(k-x)}(C\sigma^{-1})^{k-x}n^{-\sigma y_x/16}\leq k+\frac{8k\log B}{\epsilon\log n} \left(4k\max\left(\frac{C}{\sigma},\frac{\sigma}{C}\right)\right)^\frac{48k\log B}{\epsilon\log n}.$$ To conclude, we have shown, for some $C'$ depending only on  $C,\sigma$,
 $$
\mathbb{E}[(\rho(A_n)^{2k-2}|\mathcal{E}_k]\leq \sigma^k (\log n)^{C'\log n}
 ,$$ and the result follows from an application of Markov's inequality. 
\end{proof}

\begin{proof}[\proofname\ of Theorem \ref{upperboundsecondmoment}, (2)] For this we essentially follow the proof in case (1), but we use the relation \eqref{howlargeisproduct} thanks to the assumption $\frac{c}{n}\leq b_{ij}^2\leq\frac{C}{n}$ to reduce the number of constraints and, as a result, we only need to upper bound, as in \eqref{intheformof}, for any fixed sequence $s_1\leq \cdots\leq s_{k-x}\leq k$ and any fixed sequence $t_i\leq s_i,i=1,\cdots,k-x$:

$$\begin{aligned}\sum_{G\in\mathcal{G}_n(k,x):\quad P\mapsto\bar{P}}V_r(G)&=\sum_{n_0,\cdots,n_k\in [n]:\quad n_{s_i}=n_{t_i}\text{for each }i=1,\cdots,k-x} b^2_{n_0n_1}\cdots b^2_{n_{k-1}n_k}\\&\leq (\frac{\text{const}}{n})^{k-x} S(x),
\end{aligned} $$
    where we recall that 
    $$
S(x)=\sum_{i_0,\cdots,i_x\in[n]}b^2_{i_0i_1}b^2_{i_1i_2}\cdots b^2_{i_{x-1}i_x}\leq c^{-1}n^2\rho(S)^{x+1}.
    $$ The rest of the computation is almost identical to the proof of case (1).
\end{proof}

\appendix

\section{Unbounded spectral radius}\label{appendix1}
We now state and prove a theorem mentioned in the introduction, that $\sigma_*\ll(\log n)^{-1/2}$ is optimal for convergence of $\rho(A)$.
The statement and proof of this theorem is motivated by Theorem 1.3 of \cite{altschuler2024spectral} who considered a very similar problem for symmetric random matrices.

\begin{theorem}\label{unboundedspectralradius}(Unbounded spectral radius)
    Let $E$ be an $n\times n$ random matrix which has a block diagonal form. The main diagonal blocks are $\frac{n}{d_n}$ diagonal blocks $E_1,\cdots,E_{\frac{n}{d_n}}$ of size $d_n\times d_n$ each, and all other entries of $E$ are zero. We assume each diagonal block is independently distributed and follow the distribution of a complex Ginibre ensemble (i.e. each $E_i$ is a $d_n\times d_n$ square matrix with i.i.d. entries having distribution $\frac{1}{\sqrt{d_n}}\mathcal{N}_\mathbb{C}(0,1)$). Assume that $n/d_n$ is an integer. Then
    \begin{enumerate}
        \item   If   $d_n=o({\log n}),$ then for any $a>1$, $$\mathbb{P}(\lim\inf_{n\to\infty}\rho(E)\geq a)=1.$$
    \item If $d_n=O({\log n})$, then we have $$\mathbb{P}(\lim\inf_{n\to\infty}\rho(E)>1)=1.$$
     \end{enumerate}
\end{theorem}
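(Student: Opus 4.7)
The plan is to exploit the block-diagonal structure of $E$ to reduce matters to a large-deviation lower bound for the spectral radius of a single $d_n$-dimensional complex Ginibre block, and then combine with Borel--Cantelli. Because $E$ is block diagonal with independent blocks, $\rho(E) = \max_i \rho(E_i)$, and therefore for any $a > 0$,
$$
\mathbb{P}(\rho(E) \leq a) = (1 - \alpha_{d_n}(a))^{n/d_n} \leq \exp\Bigl(-\tfrac{n}{d_n}\, \alpha_{d_n}(a)\Bigr),
$$
where $\alpha_N(a) := \mathbb{P}(\rho(E_1) > a)$ for a single $N \times N$ complex Ginibre block normalised by $1/\sqrt{N}$. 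So the whole task reduces to producing a strong enough lower bound on $\alpha_N(a)$.

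For this I would invoke Kostlan's theorem: the multiset $\{|\lambda_k(E_1)|^2\}_{k=1}^N$ has the same law as $\{Y_k/N\}_{k=1}^N$, where $Y_1, \ldots, Y_N$ are independent with $Y_k \sim \operatorname{Gamma}(k,1)$. In particular $\rho(E_1)^2 \geq Y_N/N$ in distribution, and since $Y_N$ is a sum of $N$ i.i.d.\ Exp$(1)$ variables, the Bahadur--Rao sharpening of Cramér's theorem gives, for every fixed $a > 1$,
$$
\alpha_N(a) \geq \mathbb{P}(Y_N > Na^2) \geq \frac{c_a}{\sqrt{N}} \, e^{-N I(a)}, \qquad I(a) := a^2 - 1 - 2 \log a,
$$
where $c_a > 0$ is independent of $N$ and $I$ is continuous with $I(1) = 0$. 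Feeding this into the first display,
$$
\mathbb{P}(\rho(E) \leq a) \leq \exp\Bigl(-\frac{c_a\, n}{d_n^{3/2}} \, e^{-d_n I(a)}\Bigr).
$$

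To conclude (1), take any $a > 1$: since $d_n = o(\log n)$, one has $d_n I(a) = o(\log n)$ and hence $e^{-d_n I(a)} \geq n^{-o(1)}$, so the exponent is at least $c_a n^{1 - o(1)}/d_n^{3/2}$. The resulting bound is summable in $n$ and Borel--Cantelli gives $\liminf_n \rho(E) \geq a$ a.s. For (2), writing $d_n \leq K \log n$, pick $\delta = \delta(K) > 0$ with $I(1+\delta) < 1/(2K)$ (possible by continuity of $I$ at $1$); then $d_n I(1+\delta) \leq \tfrac{1}{2} \log n$, the exponent is at least $c_{1+\delta} \sqrt{n}/d_n^{3/2}$, again summable, and Borel--Cantelli yields $\liminf_n \rho(E) \geq 1+\delta > 1$ a.s.

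The main technical point to pin down is the Bahadur--Rao lower bound uniformly in $N$ for a fixed $a > 1$, which can be extracted from a direct Stirling estimate on the density of $\operatorname{Gamma}(N,1)$ restricted to $[Na^2, (1+\varepsilon)Na^2]$. Kostlan's theorem is the one place where the \emph{complex} (as opposed to real) Ginibre structure is essential; the rest of the argument is purely soft.
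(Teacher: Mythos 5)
Your argument is correct and follows essentially the same strategy as the paper: tensorize over the $n/d_n$ independent $d_n$-dimensional Ginibre blocks, obtain an exponential lower bound on the single-block large-deviation probability $\mathbb{P}(\rho(E_1)>a)$ from Kostlan's theorem combined with a Cram\'er-type estimate, and finish with Borel--Cantelli. The only genuine difference is at the single-block step: the paper works directly with the full Kostlan product $\prod_{k=0}^{N-1}\mathbb{P}[\tfrac1N\sum_{\ell\le N-k}X_\ell\le a^2]$, bounds half the factors by $1$ and applies Cram\'er to the remaining $\Theta(N)$ factors, while you extract only the largest factor, $\rho(E_1)^2\succeq Y_N/N$ with $Y_N\sim\mathrm{Gamma}(N,1)$, and apply a Bahadur--Rao/Stirling lower bound. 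Your route is a bit cleaner and produces the sharper rate $I(a)=a^2-1-2\log a$ (the paper's bound carries the looser constant $2c_{2a^2}$), and it also unifies the paper's two separate statements in Proposition~B.1 into one estimate from which both cases~(1) and~(2) follow by tuning $a$ versus the continuity of $I$ at $1$. The one technical point you correctly flag --- a lower bound on $\mathbb{P}(Y_N>Na^2)$ uniform in $N$ for fixed $a>1$ --- does follow from a direct Stirling estimate of the Gamma density on an $O(1)$-length window around $Na^2$, since the log-density there has slope bounded away from $-\infty$; and in fact, since $N=d_n$ in the application, even the soft Cram\'er asymptotic with a factor-$2$ slack would suffice once $d_n\to\infty$ (the bounded-$d_n$ regime is trivial because $\alpha_{d_n}(a)$ is then bounded below by a positive constant).
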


 The proof of Theorem \ref{unboundedspectralradius} will rely on the following estimate on the complex Ginibre ensemble which has explicit eigenvalue correlation functions:

\begin{Proposition}\label{ginirbeboundasd}
    Let $G_n=(g_{ij})$ be sampled from the $n$-dimensional complex Ginibre ensemble, i.e. $G$ has independent entries, $g_{ij}\sim\frac{1}{\sqrt{n}}\xi$ and $\xi\sim \mathcal{N}_\mathbb{C}(0,1)$. Then
    \begin{enumerate}   
   \item For any $a>1$ we can find a constant $p_a>0$ such that whenever $n$ is sufficiently large,
    $$
\mathbb{P}(\rho(G_n)\geq a)
\geq \exp(-p_an).    $$
\item For any small $\epsilon>0$ we can find some $a_\epsilon>1$ such that whenever $n$ is sufficiently large, 
$$\mathbb{P}(\rho(G_n)\geq a_\epsilon)\geq \exp(-\epsilon n).
$$

\end{enumerate}\end{Proposition}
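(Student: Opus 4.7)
The plan is to exploit the integrable structure of the complex Ginibre ensemble via Kostlan's theorem, which asserts that the squared moduli $\{|\lambda_1|^2, \ldots, |\lambda_n|^2\}$ of the eigenvalues of $G_n$ have the same joint distribution as $\{Y_1/n, Y_2/n, \ldots, Y_n/n\}$, where $Y_k \sim \mathrm{Gamma}(k,1)$ are independent. In particular, since $\rho(G_n)^2 = \max_{k \leq n}|\lambda_k|^2$, we obtain the key stochastic lower bound
\[
\mathbb{P}\bigl(\rho(G_n) \geq a\bigr) \;\geq\; \mathbb{P}\bigl(Y_n \geq n a^2\bigr),
\]
where $Y_n$ is the sum of $n$ i.i.d. $\mathrm{Exp}(1)$ variables. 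This reduces the problem to a one-dimensional large deviation estimate for a sum of i.i.d. exponentials.

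Next, I would apply Cramér's theorem (or a direct integration against the explicit Gamma density) to the i.i.d. sum $Y_n$. Writing $I(x) = x - 1 - \log x$ for the Cramér rate function of $\mathrm{Exp}(1)$, one has, for any $a>1$,
\[
\mathbb{P}\bigl(Y_n \geq na^2\bigr) \;\geq\; \frac{C}{\sqrt{n}}\exp\!\bigl(-n\, I(a^2)\bigr)
\]
for all $n$ sufficiently large, which can be verified quickly by evaluating $\int_{na^2}^{na^2+1} y^{n-1}e^{-y}/(n-1)!\, dy$ at the left endpoint and using Stirling's formula. This already yields part (1) with $p_a := I(a^2) + \delta$ for any fixed $\delta > 0$: the $1/\sqrt{n}$ factor is absorbed into the exponent at the cost of the arbitrarily small $\delta$.

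For part (2), note that $I(a^2) = a^2 - 1 - \log a^2 \to 0$ as $a \downarrow 1$, more precisely $I(a^2) = 2(a-1)^2 + O((a-1)^3)$. Hence given any $\epsilon > 0$, we may choose $a_\epsilon > 1$ close enough to $1$ so that $I(a_\epsilon^2) \leq \epsilon/2$, and then invoke the same lower bound; the prefactor $C/\sqrt{n}$ is again absorbed to produce the exponent $-\epsilon n$ for $n$ large.

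The main obstacle is essentially bookkeeping rather than conceptual: one must verify Kostlan's theorem is being applied with the correct normalization (the factor $1/\sqrt{n}$ in the definition of $G_n$ corresponds exactly to rescaling $Y_k$ by $1/n$), and one must carry out the Gamma integral estimate uniformly enough in $a$ near $1$ to handle both parts simultaneously. Both tasks are standard, so the proof amounts to assembling these pieces.
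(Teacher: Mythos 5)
Your proposal is correct and rests on the same integrable fact as the paper: the exact distribution of the Ginibre eigenvalue moduli in terms of independent Gamma variables (Kostlan's theorem, which is equation (3) of the Rider reference the paper cites, and is precisely the product CDF formula \eqref{cramerbounds}). The difference is in how you exploit it. The paper keeps the whole product $\mathbb{P}(\rho(G_n)\leq a)=\prod_{m=1}^{n}\mathbb{P}(Y_m\leq na^2)$, truncates it to the top $n/2$ (resp.\ a $\theta_\epsilon$-fraction of) factors, invokes Cramér to show each retained factor is at most $1-e^{-2c_{2a^2}(n-k)}$, and then deduces the product is at most $1 - C_0 n e^{-2c_{2a^2}n}$. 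You instead note that since the product is a CDF of $\max_m Y_m/n$, one may simply drop \emph{all but one factor}: $\mathbb{P}(\rho(G_n)\geq a)\geq\mathbb{P}(Y_n\geq na^2)$, reducing to a single Gamma tail. This is shorter, avoids the ad hoc truncation and the comparison $\frac{1}{n}\sum^{n-k}X_\ell\leq a^2\Rightarrow\bar S_{n-k}\leq 2a^2$, and gives the cleaner rate $I(a^2)=a^2-1-2\log a$ rather than the paper's looser $2c_{2a^2}$. Both parts then follow exactly as you say, with $I(a^2)\to 0$ as $a\downarrow 1$ handling part (2).

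One small slip to fix in the write-up: to lower bound $\int_{na^2}^{na^2+1}\frac{y^{n-1}e^{-y}}{(n-1)!}\,dy$ by a single value of the integrand, you should evaluate at the \emph{right} endpoint $na^2+1$, not the left. Since $a>1$, we have $na^2>n-1$, so the Gamma density is decreasing on the interval and the left endpoint is where it is largest, giving an upper bound rather than a lower bound. The asymptotics are unaffected by this constant-factor change, but the stated justification should be corrected.
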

    
\begin{proof}
    We use the exact eigenvalue density function of complex Ginibre ensemble to deduce, by \cite{MR1986426}, equation (3), for any $a>0$:
    \begin{equation}\label{cramerbounds}
\mathbb{P}(\rho(G_n)\leq a)=\prod_{k=0}^{n-1}\mathbb{P}\left[\frac{1}{n}\sum_{\ell=1}^{n-k}X_\ell\leq a^2\right]
    ,\end{equation} where $\{X_k\}$ is a given sequence of independent rate one exponential random variables.

To prove part (1), for $k\geq \frac{n}{2}$ we trivially upper bound the probability of each factor on the right-hand side of \eqref{cramerbounds} by $1$. For $0\leq k\leq \frac{n}{2}$, we bound 
    $$
\mathbb{P}\left[\frac{1}{n}\sum_{\ell=1}^{n-k}X_\ell\leq a^2\right]\leq\mathbb{P}\left[\frac{1}{n-k}\sum_{\ell=1}^{n-k}X_\ell\leq 2a^2
\right]
    $$ We will apply Cramér's theorem in large deviations to the sum $$\bar{S}_{n-k}:=\frac{1}{n-k}\sum_{\ell=1}^{n-k}X_\ell,$$ where Cramér's theorem (see for example \cite{stroock2012introduction} or any standard large deviation textbook) shows that, for any $x>1$ we can find $c_x=x-1-\log x>0$ such that  
    $$\begin{aligned}
\lim_{n\to\infty}\frac{1}{n}\log\mathbb{P}(\bar{S}_n\geq x)&=-\sup_{t\geq 0}\{tx-\log\mathbb{E}[\exp(tX_1)] \}=-c_x<0.
\end{aligned} 
    $$ That is, whenever $n$ sufficiently large we may assume $\frac{1}{n}\log\mathbb{P}(\bar{S}_n\geq x)\geq -2c_x$. That is, whenever $n$ sufficiently large, 
    $$\begin{aligned}
\mathbb{P}(\rho(G_n)\leq a)&\leq \prod_{k=0}^{\frac{n}{2}}\mathbb{P}[\bar{S}_{n-k}\leq 2a^2]\\&\leq\prod_{k=0}^{\frac{n}{2}} (1-e^{-2c_{2a^2}(n-k)})\leq (1-e^{-2c_{2a^2}n})^\frac{n}{2}\leq 1-C_0 n e^{-2c_{2a^2}n}\end{aligned} $$
for some universal constant $C_0$. This completes the prof of part (1).

The proof of part (2) is similar, except that we need to keep track of the constants more carefully. 
For the given $\epsilon>0$ we find a constant $a_\epsilon>1$ such that $$c_{a_\epsilon^2}=(a_\epsilon)^2-1-\log ((a_{\epsilon})^2)<\frac{\epsilon}{10}.$$ 
Let $\theta_\epsilon:=1-(\frac{1+a_\epsilon}{2a_\epsilon})^2$. Then for $k\geq \theta_\epsilon n$, we bound each term on the right hand side of \eqref{cramerbounds} simply by $1$. For the smaller $k$, we bound as follows: using equation \eqref{cramerbounds},
whenever $n$ is sufficiently large, 
 $$\begin{aligned}
\mathbb{P}(\rho(G_n)\leq \frac{1+a_\epsilon}{2})&\leq \prod_{k=0}^{\theta_\epsilon n}\mathbb{P}[\bar{S}_{n-k}\leq (a_\epsilon)^2]\leq\prod_{k=0}^{\theta_\epsilon n} (1-e^{-2c_{a_\epsilon^2}(n-k)})\\&\leq (1-e^{-\frac{\epsilon}{5}n})^{\theta_\epsilon n}\leq 1-C_0\theta_\epsilon ne^{-\frac{\epsilon}{5}n}\end{aligned} $$
for some universal $C_0>0$. This completes the proof of part (2): it suffices to replace $\epsilon $ by $\frac{\epsilon}{5}$ and $a_\epsilon$ by $\frac{1+a_\epsilon}{2}$.

\end{proof}

    Now we can conclude the proof of Theorem \ref{unboundedspectralradius}.
    \begin{proof}[\proofname\ of Theorem \ref{unboundedspectralradius}]
        As $E$ is block diagonal, we have, whenever $n$ is sufficiently large and using Proposition \ref{ginirbeboundasd}(1), for any $a>0$:
$$\begin{aligned}\mathbb{P}((\rho(E)\leq a))&=\prod_{i=1}^{\frac{n}{d_n}}\mathbb{P}(\rho(E_i)\leq a)
        \end{aligned}.$$

First assume we are in case (1), i.e., $d_n=o(\log n)$, then we have $p_a d_n\leq \frac{\log n}{2}$ for any constant $p_a>0$ whenever $n$ is large. Then
$$
\mathbb{P}(\rho(E)\leq a)\leq (1-n^{-1/2})^\frac{n}{d_n}\leq e^{-n^{1/4}}. 
$$ Case (1) then follows from applying Borel-Cantelli lemma.

Then we assume we are in case (2), so that $d_n\leq C\log n$ for some $C>0$. Choose $\epsilon>0$ small enough so that $C\epsilon\leq \frac{1}{4}$, then by Proposition \ref{ginirbeboundasd} (2), we can find some $a_\epsilon>1$ such that  
$$
\mathbb{P}(\rho(E)\leq a_\epsilon)\leq (1-n^{-1/2})^\frac{n}{d_n}\leq e^{-n^{1/4}}.
  $$ The claim again follows from Borel-Cantelli lemma.      \end{proof}

\section{Further examples}
In this section we outline some further examples left out in the main text, to illustrate how the long-time control in Definition \ref{longtimeshorttime} can improve upon the quantity $\sigma$ in capturing $\rho(A)$, both on the $O(1)$ scale and on the small-deviation scale. The following example is notably more general than that in Example \ref{example206}.

\begin{example}\label{example12}
(Perturbed band matrix: improved upper bound for spectral radius)    
 Fix $c\in(0,1)$.
Consider an $n\times n$ matrix $S':=S+\Lambda$, where $S=\frac{1}{\lfloor n^c\rfloor}\operatorname{Adj}(G)$ and $\operatorname{Adj}(G)$ is the adjacency matrix of a $\lfloor n^c\rfloor$-regular directed graph $G$ on $n$ vertices.

Let $\Lambda$ be an $n\times n$ matrix with nonnegative entries, having at most $n^d$ $(d\leq c)$ nonzero entries $\Lambda_1,\cdots,\Lambda_{n^d},$ and we assume $0<\Lambda_i\leq Dn^{-c}$ for some fixed constant $D>0$ and for each $i=1,\cdots,n^d$.
Then the largest row sum of $S$ satisfies
$$
\sup_i\sum_j [S']_{ij}\leq 1+Dn^{-d-c}
,$$
and equality holds with the largest row sum equal to $1+Dn^{d-c}\leq 1+D$ in the extreme case where each $\Lambda_i$ lies on the same row and $\Lambda_i=Dn^{-c}$ for each $i$.

Let us assume that all the entries $\Lambda_i$ lie on the 1-th row of $S'$, and that none of the $\Lambda_i$'s lie on the diagonal entry of $S'$. In this case, we have the computation: for any fixed $i_0\in [n]\setminus\{1\}$:
\begin{equation}\label{adhoccomputation}\begin{aligned}\sum_{i_1,\cdots,i_k\in [n]}&(S')_{i_0i_1}(S')_{i_1i_2}\cdots (S')_{i_{k-1}i_k}\\&\leq 
\sum_{i=0}^k C_k^i (Dn^{-c})^in^{-c(k-i)}(n^c)^{k-2i}n^{di}\leq (1+Dn^{d-2c})^k.
\end{aligned}\end{equation}
where the label $i$ labels the number of terms in the expansion of $(S')^k$ coming from entries in $\Lambda$, $C_k^i$ is the combinatorial factor of the choice in $[k]$ for terms come from $\Lambda$. The term $n^{di}$ arises as there are at most $n^d$ elements in $\Lambda$, so we have at most $n^d$ choices of column label for an element from $\Lambda$. Finally the $(n^c)^{k-2i}$ factor arises because we have fixed $i$ elements from $\Lambda$, but the fact that each $\Lambda_i$ lies on the same row and none of the $\Lambda_i$ is on the diagonal imply that they cannot be consecutive terms in the product of $(S')^k$, so we have already fixed $2i$ free vertices. (The assumption that $i_0\neq 1$ ensures that all terms from $\Lambda'$ appear as the form $(S')_{i_ri_{r+1}}$ for some $r\geq 1$, so we have a loss of two free vertices.). Thus, there are only $k-2i$ remaining free vertices to be chosen, each having at most $n^c$ choices thanks to the regular digraph structure imposed on $S$. This also implies $i\leq k/2$, but we still take the sum over $i\leq k$ as each term is non-negative. 

From this computation, and the fact that none of the $\Lambda_i$ lies on the diagonal, we see that 
\begin{equation}\label{sumsumasagae}\sum_{i_1\cdots,i_k\in [n]} (S')_{1i_1}\cdots (S')_{i_{k-1}i_k}\leq (1+Dn^{d-c})(1+Dn^{d-2c})^{k-1}\leq (1+D)(1+Dn^{d-2c})^{k-1}.
\end{equation} A similar computation also applies to the long-time control of $(S^t)^k$.
From the above computations 
\eqref{adhoccomputation}, \eqref{sumsumasagae} we see that $1+Dn^{d-2c}$, instead of $1+Dn^{d-c}$, should be the better estimate for $\rho(X)$. 

When $d=c$, by taking $D$ large we create a family of variance profiles with arbitrarily large row sum but still $\rho(X)\leq 1+o(1)$ thanks to Theorem \ref{Theorem1.6a12}, as well as a small deviation estimate $\rho(X)\leq (1+Dn^{-c})(1+\frac{\log n}{n^{c/2}})$ with very high probability. When $d<c$, our refined bound on $\sigma$ also provides much better small deviation bound for $\rho(X)$ when $d$ is very close to $c$ and $c$ is close to 1. 
\end{example}

Then we provide an example showing the optimality of condition \eqref{howdoesupperbounds}.
\begin{example}\label{examplebandmatrix}

Consider $d_n=n^{1-c}$ for some $c>0$, and $A=(a_{ij})$ where $a_{ij}\sim \frac{1}{\sqrt{d_n}}g$ for a standard Gaussian variable $g$ whenever $0<|i-j|<d_n$, $a_{ij}=0$ when $|i-j|>d_n$, and $a_{ii}=\frac{1}{\sqrt{d_n}}\xi$, where $\xi$ is a symmetric, mean 0, variance 1 random variable with $\mathbb{P}(|\xi|^\frac{1-c}{2}\geq t)\sim t^{-1}$. Denote by $A_o$ the matrix from $A$ setting all diagonal entries 0, and $A_d$ the diagonal part of $A$. By the main result of this paper, we have $\rho(A_o)\leq 1+o(1)$ with high probability. However, we also have $\|A_d\|>1$ with non-vanishing probability, and since $A_d$ is diagonal, the eigenvalues of $A_d$ away from $[-1,1]$ can be proven to converge to a Poisson process in the large $n$ limit. We can write $A=A_o+A_d$ and regard $A$ as a (growing) rank perturbation of the matrix $A_o$. Recently \cite{han2024outliers} characterized the outliers of perturbed non-Hermitian band matrices, and (although that result does not immediately apply here as $A_d$ has growing rank, we expect an analogous result can be proven with additional effort), we can formally deduce that outlying eigenvalues of $A_0+A_d$ correspond with eigenvalues of $A_d$ outside $[-1,1]$, and hence with non-vanishing probability, $\rho(A)\to 1+o(1)$ does not hold.

\end{example}

\section{Non-symmetric entry distributions}\label{appendixC}

In this section we prove Theorem \ref{nosymmetricdistribution} on non-symmetric distributions. The notations and constructions in the proof will be used repeatedly in the proof of Theorem \ref{Theorem1.6a12} for non-symmetric entry distributions.

\begin{proof}[\proofname\ of Theorem \ref{nosymmetricdistribution}] We again consider the summation of monomial expansions $$\mathbb{E}[x_{i_0i_1}x_{i_1i_2}\cdots x_{i_{p-1}i_p}\bar{x}_{i_{p+1}i_p}\cdots\bar{x}_{i_0i_{2p-1}}]$$ over a closed path $\mathcal{P}=(i_0,i_1,i_2,\cdots i_p,i_{p+1},\cdots i_{2p-1},i_0)$ of length $2p$. Since $x_{ij}$ are centered each directed edge $(i,j)$ must exist at least twice in $\mathcal{P}$ in the sense specified in the proof of Theorem \ref{momenttheorem2.2}, but as $x_{ij}$ may not be symmetric, the edge $i\mapsto j$ can appear an odd number of times: if this is the case then we call this edge an odd directed edge. Since $2p$ is even, the number of odd directed edges must be even.
    We will adapt the argument in \cite{peche2007wigner} for Wigner matrices, to define a gluing procedure which takes as an input a path $\mathcal{P}$ with length $2p$ and $2\ell$ odd edges, and outputs a math $\mathcal{P}'$ of length $2p-2\ell$ having the same set of edges as $\mathcal{P}$ except that the last time of occurrence of each odd edge in $\mathcal{P}$ is removed. Thus each directed edge in $\mathcal{P}'$ is traveled an even number of times, but $\mathcal{P}$ is not necessarily a closed path: it can be disjoint union of connected paths.

For such an input math $\mathcal{P}$, the moment of the last occurrence of the odd edge is a subset of $\{1,2,\cdots,2p\}$, which can be written in a union of $J$ ($1\leq J\leq 2\ell$) disjoint intervals on $\mathbb{Z}$. Then the set of odd edges is split into $J$ subsequences $S_i,i=1,\cdots,J$. We denote by $e_i$ ($f_i$) the left (right) endpoints of $S_i$ and $f_0=e_{J+1}=i_0$ with $i_0$ the initial point of $\mathcal{P}$.   Finally, for $i=0,\cdots,J$ let $\mathcal{P}_i$ be the subpath starting at $f_i$ and ending at $e_{i+1}$. 

From these subpaths $\mathcal{P}_i$ we construct a graph $G$ with vertices $\{e_i,f_i:i=0,\cdots,J\}$, where an edge is present from $v_i$ to $v_j$ if there is a path $\mathcal{P}_k$ admitting $v_i,v_j$ as endpoints. We denote $\mathcal{L}:=\{e_i,f_i;0\leq i\leq J\}$. Let $I'\leq J$ be the number of connected components of $G$. From this graph we can reconstruct the paths as follows: first read $\mathcal{P}_0$ until we reach the endpoint $e_1$, then choose a subpath $\mathcal{P}_{i_1}$ with $e_1$ as endpoint: we read $\mathcal{P}_{i_1}$ in the forward direction if $e_1$ is the left endpoint of $\mathcal{P}_{i_1}$ and in the backward direction otherwise. Then we find $\mathcal{P}_{i_2}$ whose endpoint coincides with the right endpoint of $\mathcal{P}_{0}\cup\mathcal{P}_{i_1}$, until the constructed subpath $\mathcal{P}_0\cup\mathcal{P}_{i_1}\cdots\cup\mathcal{P}_{i_k}$ is a closed path. Then if $i_0$ is the endpoint of some $\mathcal{P}_j$ which hasn't been glued we glue $\mathcal{P}_j$ in the direction where $i_0$ is the starting point, and restart the gluing procedure. After this, we find the first path $\mathcal{P}_i$ not glued before (whose left end point must have occurred in $\mathcal{P}_0\cup\cdots\cup\mathcal{P}_{i_k}$), and we continue the gluing procedure starting from $\mathcal{P}_{i}$. This produces a sequence of $I_0\geq I'$ closed paths $\widetilde{W}_i,0\leq i\leq I_0-1$ with origins $v_{i_j}\in\mathcal{L},j=1,\cdots,I_0-1,v_{i_0}=i_0.$ Then we reconstruct the path $\mathcal{P}'$ from the paths $\widetilde{W}_i$: we can order the $\widetilde{W}_i$ in an arbitrary way so long as we start from $\mathcal{P}_0,$ read all the edges with origin $v_1$ and concatenate them to get the path $W_1$, etc. Then we get a sequence of paths $W_1,\cdots,W_{I-1}$ and finally define $\mathcal{P}'$ by concatenating the paths $W_i,0\leq i\leq I-1$, where $\mathcal{P}'$ is possibly disconnected. The order of reading $W_i's$ in $\mathcal{P}'$ will not be relevant: what matters is that the origin of $W_i$ is a marked vertex of $\mathcal{P}'$ and pairwise distinct.

Then there are three possible cases for the path $\mathcal{P}$: (A), the gluing yields a \textit{genuine} closed even path $\mathcal{P}'$ (a closed path where each directed edge is traveled an even number of times); (B), the gluing yields a sequence of $I\geq 2$ closed even paths with respective origins $\{i_0,v_i,1\leq i\leq I-1\},$ each $v_i$ an odd index of $\mathcal{P}'$ (recall the proof of Theorem \ref{momenttheorem2.2} for definition of an odd index, which is an analogue of marked vertex in \cite{peche2007wigner}); or (C) the gluing yields $I\geq 2$ paths, with some of them having an odd directed edge.

We now estimate the contribution of case (A) and (B), and show that case (C) is controlled by case (B) or (A). For this we need to recall the insertion procedure defined in \cite{peche2007wigner}, Section 3.1. First consider the simple case (A), and assume given a closed even path $\mathcal{P}'$ of length $2m=2p-2\ell$, so we know edges in $\mathcal{P}'$ and the orders they are read. First we choose $J$ vertices with at most $C_{2m}^J$ choices. This splits $\mathcal{P}'$ into $J+1$ subpaths $\mathcal{R}_i,0\leq i\leq J$. $\mathcal{R}_i$ and $\mathcal{P}_i$ differ only by the order they are read (and possibly the directions they are read): so the number of ways to construct $\mathcal{P}_i$ from $\mathcal{R}_i$ is $J!2^J$. We find the number of ways to write $2\ell$ as the sum of $J$ positive integers, which is the way we assign the unreturned edges $S_i$, giving a factor $C_{2\ell}^J$. Finally, take an ordered collection of $2\ell-J$ edges from the edges in $\mathcal{P}'$: this is given in $\frac{(2m)!}{(2m-2\ell+J)!}$ methods: we only need $2\ell-J$ instead of $2\ell$ edges as the endpoints of each $S_i$ are known a priori. Finally we need to take into account the weight $\mathbb{E}\left[\prod_{j=0}^{p-1}\frac{x_{i_ji_{j+1}}}{\sqrt{n}}\prod_{j=p}^{2p-1}\frac{\overline{x}_{i_{j+1}i_j}}{\sqrt{n}}\right]$, which evaluated along path $\mathcal{P}$ will differ from that along $\mathcal{P}'$ by a factor depending on the sub-exponential moments of $x_{ij}$. This contribution cannot be bounded separately but has to be bounded taken account of weights on the double path $\mathcal{P}'$. As the gluing procedure adds multiplicity at most one to the edges in $\mathcal{P}'$, we only need to replace in equation \eqref{contributiongeneral} each term $(\text{const}\cdot k)^k$ by $(\text{const}\cdot k)^{k+1}$ to account for the additional weight, where the constant in the equation may change from line to line. In the worst case scenario we would have to consider (recall $m=p-\ell$, and $\sum_{k\geq 1}kn_k=m$) \begin{equation}\label{**contribution234} \hat{T}_m(n_1,\cdots,n_p):=
n\frac{1}{n^m}\frac{n!}{n_0!n_1!\cdots n_p!}\frac{m!}{\prod_{k=2}^m(k!)^{n_k}}\prod_{k=2}^p(2k)^{kn_k}\prod_{k=2}^s(\text{const}\cdot k)^{(k+1)n_k} \cdot 4^{\sum_{k\geq 2}kn_k},
\end{equation} which amounts to increasing the multiplicity of each edge in $\mathcal{P}'$ by one.

Via a similar computation as in \eqref{lastboundsssa}  we have $\sum_{k= 2}^p\frac{(\text{const}\cdot p)^kk^{k+1}}{n^{k-1}}\leq\frac{1}{4}$ whenever $p\leq C_0 \sqrt{n}$ for some fixed small constant $C_0>0$ depending only on the sub-exponential moments of $x_{ij}$. We get the following estimate, where $C_1<\frac{1}{4}$: 
$$\sum_{\sum_{k\geq 2}n_k>0}\widehat{T}_m(n_1,\cdots,n_p)\leq C_1n,\quad \widehat{T}_m(m,0,\cdots,0)\leq C_1n,
$$and that 
$$
\widehat{T}_m:=\sum_{n_1,\cdots,n_k\geq 0:\quad \sum_{k\geq 0}n_k=n,\quad \sum_{k\geq 0} kn_k=m}\widehat{T}_m(n_1,n_2,\cdots,n_p)\leq 2C_1n.
$$
Combining with the gluing procedure, we see that the contribution by paths $\mathcal{P}$ with $I=1$, $\ell>0$ is given by
$$\begin{aligned}
&\sum_{\ell=1}^m \widehat{T}_{p-\ell}\sum_{J=1}^{2\ell} C_{2p-2\ell}^J J!2^JC_{2\ell}^J \frac{(2p-2\ell)!}{(2p-4\ell+J)!} n^{-\ell}\\&\leq 2C_1n\cdot \sum_{\ell=1}^{p-1}\left(\frac{\text{const}(p-\ell)}{\sqrt{n}}
\right)^{2\ell}\leq 4C1C_0^2n
\end{aligned}
$$ provided that $p\leq C_0\sqrt{n}$. Thus contributions of $\mathcal{P}$ of class (A) is $O(n)$.

For paths $\mathcal{P}$ of class (B), we follow the ideas of \cite{peche2007wigner}, Section 3.1.2. Assume the closed even paths corresponding to the $I\geq 2$ clusters have lengths $2s_i,i=1,\cdots,I-1$. Each $s_i>0$ and $\sum_{i=0}^{I-1}2s_i=2p-2\ell$. Again we write these closed even paths by $W_i,i=0,\cdots,I-1$. The first closed even path makes a contribution computable by Theorem \ref{momenttheorem2.2}. As previously noted, the starting points of $W_1,\cdots,W_{i-1}$ are odd indices of $\mathcal{P}'$, so that their origins can be chosen in $C_{2p-2\ell}^{I-1}$ ways and the order of choice is irrelevant to the insertion procedure. Then we need to choose $J-(I-1)$ vertices from vertices in $\mathcal{P}'$: they give the endpoints of the odd edges $S_1,\cdots,S_J$ and the number of choice is $\binom{2p-2\ell-I+1}{J-I+1}$. The choice of all these $J$ vertices splits the path into subpaths $\mathcal{R}_i,i=1,\cdots,J$ and $\mathcal{P}_i$ differ from $\mathcal{R}_i$ only by ordering and directions. The contribution of such paths is upper bounded by 
\begin{equation}\label{contributionB}\begin{aligned}
&\sum_{\ell=1}^{p-1}\sum_{J=1}^{2\ell} 2^J J!\binom{2\ell}{J} \frac{(2p-2\ell)!}{(2p-4\ell+J)!}n^{-\ell}\sum_{I=2}^J\binom{2p-2\ell}{I-1}\binom{2p-2\ell-I+1}{J-I+1}\\&\times\sum_{s_0,\cdots,s_{I-1}:\sum_i s_i=2p-2\ell}n\prod_{i=0}^{\ell-1}(\frac{\widehat{T}_n}{n}),
\end{aligned}\end{equation}
where we consider $\frac{\widehat{T}}{n}$ because the origin of the path has been fixed. In the above computation we assume the $W_i$'s do not share an edge (if they share an edge, then we have one fewer vertex resulting in a factor $n^{-1}$, which compensates the higher moment factor $k$ we gain, so all such contributions are negligible). Noting that 
$$
\binom{2p-2\ell}{I-1}\binom{2p-2\ell-I+1}{J-I+1}\leq 2^J \binom{2p-2\ell}{J},
$$
    we see that \eqref{contributionB} is $O(n)$, so contributions from case (B) is $O(n)$.

    Finally consider case (C). For this we need an additional gluing procedure that can be found in the beginning of \cite{peche2007wigner}, Section 2.2. Denote by $\widetilde{i}$ the smallest index such that $W_{\widetilde{i}}$ has an odd edge, and $\widetilde{e}$ (resp. $t_{\widetilde{e}}$) the first occurrence (resp. the instance of first occurrence) of an odd edge in $W_{\widetilde{i}}$. Let $\widetilde{j}$ be the first instant where $W_{\widetilde{j}}$ also has the edge $\widetilde{e}$, and $t'_{\widetilde{e}}$ is the instance of first appearance of $\widetilde{e}$ in $W_{\widetilde{j}}$. If the selected occurrence of $\widetilde{e}$ has opposite directions in $W_{\widetilde{i}},W_{\widetilde{j}}$, then we can glue the two paths to define $W_{\tilde{i}}\vee W_{\widetilde{j}}$ via first reading the $t_{\widetilde{e}}-1$ edges of $W_{\widetilde{i}}$, switch to $W_{\widetilde{j}}$, read edges of $W_{\widetilde{j}}$ from the instant $t_{\widetilde{e}}+1$ until end of $W_{\widetilde{j}}$. Then we restart at the origin of $W_{\widetilde{j}}$ until (not including) the occurrence of $\widetilde{e}$, switch to $W_{\widetilde{i}}$, and read its edges until the end. The formed path $W_{\tilde{i}}\vee W_{\widetilde{j}}$ is obtained via removing occurrence of $\widetilde{e}$ twice.  If the occurrence of $\widetilde{e}$ has same direction then we proceed similarly with the gluing, but we read edges in $W_{\widetilde{e}}$ in reverse direction. We continue the procedure until we end up with $I-I_1$ closed even paths, and $I_1$ iterations of the gluing procedure result in a loss of $2I_1$ occurrences of edges. The final length of the path $\mathcal{P}'$ is $2p-2\ell-2I_1.$ Let $D_j,j=0,\cdots,I-I_1-1$ denote the closed even paths thus defined, so it has total length $2p-2\ell-2I_1$. To reconstruct the paths we record (i) the $I_1$ moments we remove the $I_1$ marked edges; (ii) the length and (iii) origins of all these $I_1$ paths when the switch occurs. The number of preimages, times the multiplicative factor $K/\sqrt{n}$ contributed by the erased edges, is bounded by (for sufficiently small $C_0>0$)
    $$\binom{2p}{I_1}\times (\frac{\text{const}
\times s^2}{n})^{I_1}\leq 10^{-2} \binom{2p}{I_1},\quad s\leq C_0\sqrt{n}, 
    $$and this is enough to show the contribution from case (C) is bounded by contribution from case (A) or case (B). More precisely, assume that $I-I_1>1$ and we reduce to case (B) (reduction to case (A) when $I-I_1=1$ is analogous). Suppose that $D_i$ is obtained by gluing $I_i'+1$ paths, we only need to bound
\begin{equation}\label{upperboundforthegames}\begin{aligned}
&\sum_{\ell=1}^{p-1}\sum_{J=1}^{2\ell} 2^JJ! \binom{2\ell}{J}(\frac{\text{const}}{ n})^\ell \frac{(2p-2\ell)!}{(2p-4\ell+J)!} \sum_{I=2}^J \binom{2p-2\ell-I+1}{J-I+1} \sum_{I_1=1}^{I-1} \binom{2p-2\ell}{I-I_1-1}\\&\times \sum_{\sum_i s_i'=2p-2\ell-2I_1} n\prod_{i=0}^{I-I_1-1}\binom{2s_i'}{I_i'}\frac{\widehat{T}_{2s_i'}}{n} (\frac{s_i^2}{n})^{I_i'}
.\end{aligned}\end{equation}
This expression comes from replacing the terms $\binom{2p-2\ell}{I-1}$ in expressions in case (B) (or (A)) \eqref{contributionB} by the left hand side of the following expression 
$$
\binom{2p-2\ell}{I-I_1-1}\sum_{I_0',I_1',\cdots} \prod_{i=0}^{I-I_1-1} (\frac{s_i^2}{n})^{I_1'}
\binom{2s_i'}{I_i'}\leq \binom{2p-2\ell}{I-1}(\frac{p^2}{n})^{I_1}\text{const}^\ell,
$$
we see that whenever we choose $C_0>0$ sufficiently small, then for all $p\leq C_0\sqrt{n}$, the contribution from case (C) as in the expression \eqref{upperboundforthegames} is bounded by a constant multiple of the contribution from case (A) or (B). This completes the proof of the theorem.
    
\end{proof}

\section*{Funding}
The author is supported by a Simons Foundation Grant (601948, DJ)

\printbibliography

\end{document}